\documentclass[10pt]{amsart}
\usepackage{hyperref}
\usepackage{amsmath}
\usepackage{amssymb}
\usepackage{xypic}
\usepackage{yhmath}
\usepackage{setspace}
\def\beqnn{\begin{eqnarray*}}\def\eeqnn{\end{eqnarray*}}

\newtheorem{theorem}{Theorem}[section]
\newtheorem{theorem*}{Theorem}
\newtheorem{lemma}[theorem]{Lemma}
\newtheorem{sub-lemma}[theorem]{Sub-Lemma}
\newtheorem{proposition}[theorem]{Proposition}
\newtheorem{corollary}[theorem]{Corollary}
\newtheorem{claim}[theorem]{Claim}

\newtheorem{conjecture}[theorem]{Conjecture}

\theoremstyle{definition}    
\newtheorem{definition}[theorem]{Definition}

\newtheorem{remark}[theorem]{Remark}
\newtheorem{question}[theorem]{Question}

\numberwithin{equation}{section}

\begin{document}

\begin{center}
\title[The norms of multiplication operators]{On the norms of the multiplication operators between weighted Bergman spaces}
\end{center}

\author{Jianjun Jin}
\address{School of Mathematics Sciences, Hefei University of Technology, Xuancheng Campus, Xuancheng 242000, P.R.China}
\email{jin@hfut.edu.cn, jinjjhb@163.com}

\subjclass[2020]{47A30, 30C35; 30H20, 47B38}


\keywords{Multiplication operator, Schwarzian derivative, univalent rational function, norm of Multiplication operator, Koebe function}
\begin{abstract}
In this paper, we study the norms of multiplication operators acting between weighted Bergman spaces. First, we provide a proof for a norm estimate previously announced in our recent paper \cite{Jin-c}. Second, we establish a sharp norm estimate for certain special multiplication operators between weighted Bergman spaces, a result that is novel to the literature. Finally, we also discuss the connections between the Brennan conjecture and related multiplier norms induced by the Schwarzian derivative of univalent functions.
\end{abstract}

\maketitle

\section{{\bf {Introduction and main results}}}
Let $\Delta=\{z:|z|<1\}$ denote the unit disk in the
complex plane $\mathbb{C}$. For a domain $\Omega$ in $\mathbb{C}$, we use $\mathcal{A}(\Omega)$ to denote the class of all analytic functions in $\Omega$. For $-1<\alpha<\infty$, we define the weighted Bergman space ${\bf A}_{\alpha}^2={\bf A}_{\alpha}^2(\Delta)$ as
$${\bf A}_{\alpha}^2(\Delta)=\{\phi \in \mathcal{A}(\Delta) : \|\phi\|_{\alpha}^2:=(\alpha+1)\iint_{\Delta}|\phi (z)|^2(1-|z|^2)^{\alpha}\frac{dxdy}{\pi}<\infty\}.$$
Let $g \in \mathcal{A}(\Delta)$. The {\em multiplication operator} $M_g$ is defined by
$$M_g(\phi)(z):=g(z)\phi(z), \,\,\phi\in \mathcal{A}(\Delta).$$
For $\gamma\in \mathbb{R}$, we define the growth space $\mathcal{B}_{\gamma}=\mathcal{B}_{\gamma}(\Delta)$ as
$$
\mathcal{B}_{\gamma}(\Delta)=\left\{ \phi \in \mathcal{A}(\Delta) : 
\|\phi\|_{\mathcal{B}_{\gamma}}:=\sup_{z\in \Delta}|\phi(z)|(1-|z|^2)^{\gamma}<\infty \right\},
$$
which is a Banach space with the norm $\|\cdot\|_{\mathcal{B}_{\gamma}}$ defined above.
It is well known that 
\begin{theorem}\label{th-1}Let $\beta>\alpha>-1$. 
The multiplication operator $M_g$ is bounded from ${\bf A}_{\alpha}^2$ to ${\bf A}_{\beta}^2$ if and only if $g$ belongs to $\mathcal{B}_{\gamma}$ with $\gamma=(\beta-\alpha)/2.$
\end{theorem}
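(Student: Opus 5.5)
We prove the two directions separately. For sufficiency, suppose $g\in\mathcal{B}_{\gamma}$ with $\gamma=(\beta-\alpha)/2$. Then for every $\phi\in{\bf A}_{\alpha}^2$ we have the pointwise bound
$$|g(z)\phi(z)|^2(1-|z|^2)^{\beta}\le \|g\|_{\mathcal{B}_{\gamma}}^2\,|\phi(z)|^2(1-|z|^2)^{\beta-2\gamma}=\|g\|_{\mathcal{B}_{\gamma}}^2\,|\phi(z)|^2(1-|z|^2)^{\alpha}.$$
Integrating over $\Delta$ against $dxdy/\pi$ and inserting the normalizing constants gives
$$\|M_g\phi\|_{\beta}^2\le \frac{\beta+1}{\alpha+1}\,\|g\|_{\mathcal{B}_{\gamma}}^2\,\|\phi\|_{\alpha}^2 .$$
So $M_g$ is bounded with $\|M_g\|\le\sqrt{(\beta+1)/(\alpha+1)}\,\|g\|_{\mathcal{B}_{\gamma}}$. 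This direction is routine.

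For necessity, assume $M_g:{\bf A}_{\alpha}^2\to{\bf A}_{\beta}^2$ is bounded. We test it on the normalized reproducing kernels. For $w\in\Delta$ let
$$k_w(z)=\frac{(1-|w|^2)^{(\alpha+2)/2}}{(1-\bar wz)^{\alpha+2}}.$$
With the normalization $(\alpha+1)\,dA_\alpha/\pi$, the reproducing kernel of ${\bf A}_{\alpha}^2$ is $(1-\bar w z)^{-(\alpha+2)}$, so $\|k_w\|_{\alpha}=1$. Boundedness then gives $\|g\,k_w\|_{\beta}\le\|M_g\|$ for every $w\in\Delta$. Next we bound the point evaluation on ${\bf A}_{\beta}^2$:
$$|F(w)|^2\le (1-|w|^2)^{-(\beta+2)}\|F\|_{\beta}^2, \qquad F\in{\bf A}_{\beta}^2 .$$
Alternatively, one can use subharmonicity of $|F|^2$ on the disk $D(w,(1-|w|)/2)$, which yields the same bound up to a constant. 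Applying this with $F=g\,k_w$ at the point $z=w$, where $k_w(w)=(1-|w|^2)^{-(\alpha+2)/2}$, we obtain
$$|g(w)|^2(1-|w|^2)^{-(\alpha+2)}\le (1-|w|^2)^{-(\beta+2)}\|M_g\|^2 .$$
This rearranges to $|g(w)|^2(1-|w|^2)^{\beta-\alpha}\le\|M_g\|^2$, that is, $\|g\|_{\mathcal{B}_{\gamma}}\le\|M_g\|$.

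The only step that requires care is the kernel computation: we must confirm the exact form of the reproducing kernel and the norm of $k_w$ under the normalization used in the paper. This follows from the Taylor expansion, using the orthogonality of the monomials $z^n$ and the identity
$$\|z^n\|_{\alpha}^2=\frac{n!\,\Gamma(\alpha+2)}{\Gamma(n+\alpha+2)} .$$
Even if the constants differed, the argument would still give equivalence of $\|M_g\|$ and $\|g\|_{\mathcal{B}_{\gamma}}$ up to constants, which is all the theorem asserts.
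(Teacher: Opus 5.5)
Your proof is correct and complete. The paper does not prove this theorem; it only cites \cite{zhao}, where pointwise multipliers between more general weighted Bergman (and Hardy) spaces are characterized. Your argument is therefore a self-contained substitute rather than a reconstruction of the paper's reasoning.

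In the Hilbert-space case with $\beta>\alpha$, your two steps are all that is needed. For sufficiency you use the pointwise inequality $|g\phi|^2(1-|z|^2)^{\beta}\le\|g\|_{\mathcal{B}_{\gamma}}^2|\phi|^2(1-|z|^2)^{\alpha}$. For necessity you test on the normalized kernels $k_w$ and apply the point-evaluation bound in ${\bf A}_{\beta}^2$. Your closing caveat about constants is unnecessary: the kernel $(1-\bar wz)^{-(\alpha+2)}$ and the identity $\|k_w\|_{\alpha}=1$ follow at once from the paper's formulas (\ref{l-eq-1}) and (\ref{l-eq-2}).

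What your route adds over the bare citation is the explicit two-sided estimate
\[
\|g\|_{\mathcal{B}_{\gamma}}\le \|g\|_{{\bf M}_{\alpha,\beta}}\le \sqrt{\frac{\beta+1}{\alpha+1}}\,\|g\|_{\mathcal{B}_{\gamma}},
\]
with constant $1$ on the left. The right-hand inequality is exactly the estimate the paper uses without comment in the proof of Lemma \ref{lemma-key}. Since $\|g_0\|_{\mathcal{B}_{\gamma}}=1$, combining your estimate with Theorem \ref{m-2} also gives
\[
\mathbb{M}_{g_0}(\alpha,\beta)\le\mathbb{M}(\alpha,\beta)\le\sqrt{(\beta+1)/(\alpha+1)}.
\]
This brackets the quantity asked about in the paper's closing question.
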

\begin{remark}
A proof of this result can be found in \cite{zhao}. 
\end{remark}
For $\beta>\alpha>-1$, when $M_g$ is bounded from ${\bf A}_{\alpha}^2$ to ${\bf A}_{\beta}^2$, we say $g$ is a {\em multiplier} from ${\bf A}_{\alpha}^2$ to ${\bf A}_{\beta}^2$. The class of all multipliers, denoted by $\mathbf{M}_{\alpha, \beta}$, is a Banach space with the following multiplier norm
$$
\|g\|_{{\bf {M}}_{\alpha, \beta}}:=\sup_{\phi \in {\bf A}^{2}_{\alpha}, \phi \neq 0}\frac{\|g \phi\|_{\beta}}{\|\phi\|_{\alpha}}.
$$ 

In the paper \cite{SS}, Shimorin studied a special multiplication operator from ${\bf A}_{\alpha}^2$ to ${\bf A}_{\alpha+4}^2$, which is induced by the Schwarzian derivative of a univalent function.  To recall the results of \cite{SS}, we first introduce some notations and definitions. We let $\mathcal{S}$ be the class of all univalent functions $f$ in $\Delta$ with $f(0)=f'(0)-1=0$.  

For a locally univalent function $f$ in an open domain $\Omega$ of $\mathbb{C}$, the {\em Schwarzian derivative} $S(f)$ of $f$ is defined by 
$$S(f)(z)=\bigg[\frac{f''(z)}{f'(z)}\bigg]'-\frac{1}{2}\left[\frac{f''(z)}{f'(z)}\right]^2, \,z\in \Omega.$$
Here, $N(f)(z):= f''(z)/f'(z)$ is called the Pre-Schwarzian derivative of $f$. The following  multiplication operator has been considered by Shimorin in \cite{SS}. 
$$M_{S(f)}(\phi)(z):=S(f)(z)\phi(z),\, \phi\in \mathcal{A}(\Delta).$$
It is well known that
 $$|S(f)|(1-|z|^2)^2\leq 6,$$
 for all $f\in \mathcal{S}$, and hence $S(f)$ belongs to $\mathcal{B}_2$, see \cite{Du}. Then $S(f)$ is a multiplier from ${\bf A}_{\alpha}^2$ to ${\bf A}_{\alpha+4}^2$ for any $f\in \mathcal{S}$. 
Let  $t\in \mathbb{R}$. For $f\in \mathcal{S}$, the {\em integral means spectrum} $\beta_f(t)$ is defined by
\begin{equation}\label{defi}
\beta_f(t):=\limsup_{r \rightarrow 1^{-}}\frac{\log \int_{-\pi}^{\pi} |f'(re^{i\theta})|^{t}d\theta}{|\log(1-r)|}. 
\end{equation}
The {\em universal integral means spectrum} $B(t)$ is defined by
\begin{equation}
B(t)=\sup\limits_{f\in \mathcal{S}} \beta_f(t).\nonumber
\end{equation}
It was proved in \cite{SS} that
\begin{theorem}
Let $\alpha>-1$. Then
$$\|S(f)\|_{{\bf {M}}_{\alpha, \alpha+4}}\leq 36\frac{\alpha+3}{\alpha+1},$$
holds for all $f\in \mathcal{S}$. 
\end{theorem}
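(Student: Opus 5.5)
The plan is to get the estimate directly from the pointwise bound $|S(f)(z)|(1-|z|^2)^2\leq 6$ recalled above for $f\in\mathcal{S}$, by integrating it against the weight. I read the statement as a bound on the multiplier norm itself, not on its square; with that reading the stated constant will not even be sharp for this crude argument.

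Fix $f\in \mathcal{S}$ and $\phi\in {\bf A}^2_\alpha$, and write $dA=dxdy/\pi$. Using the definition of $\|\cdot\|_{\alpha+4}$ and then the pointwise estimate, I would compute
\begin{align*}
\|S(f)\phi\|_{\alpha+4}^2
&=(\alpha+5)\iint_{\Delta}|S(f)(z)|^2(1-|z|^2)^4\,|\phi(z)|^2(1-|z|^2)^{\alpha}\,dA\\
&\leq 36(\alpha+5)\iint_{\Delta}|\phi(z)|^2(1-|z|^2)^{\alpha}\,dA
=36\,\frac{\alpha+5}{\alpha+1}\,\|\phi\|_\alpha^2 .
\end{align*}
Taking the supremum over $\phi\neq 0$ gives
$$\|S(f)\|_{{\bf M}_{\alpha,\alpha+4}}\leq 6\sqrt{\frac{\alpha+5}{\alpha+1}}.$$

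It remains to compare this with $36\frac{\alpha+3}{\alpha+1}$. Put $y=\frac{\alpha+3}{\alpha+1}$. Since $\alpha>-1$, we have $y>1$, and also $\frac{\alpha+5}{\alpha+1}=2y-1<2y$. Hence
$$6\sqrt{\frac{\alpha+5}{\alpha+1}}\leq 6\sqrt{2y}\leq 36y,$$
where the last inequality holds because $2y\leq 36y^2$ whenever $y\geq 1$. This gives the theorem.

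There is no real obstacle here; the only delicate point is the interpretation of the statement. If the inequality were meant for the squared norm, this argument would give only the constant $36\frac{\alpha+5}{\alpha+1}$, which is weaker than $36\frac{\alpha+3}{\alpha+1}$. Reaching it would need a genuinely finer argument, such as Shimorin's, which exploits univalence beyond the pointwise Schwarzian bound, for example through the Grunsky-type inequalities. Under the natural reading as a bound on the operator norm, the elementary computation above suffices.
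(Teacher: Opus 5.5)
Your computation is correct, and for the inequality exactly as printed it is a complete proof. It is a different route from the one behind the statement, however. The paper gives no proof of its own here; it quotes the result from Shimorin \cite{SS}. Shimorin's argument does not simply integrate the pointwise bound $|S(f)(z)|(1-|z|^2)^2\le 6$. It exploits univalence through a Grunsky-type inequality, and what it yields is a bound on the \emph{squared} norm, $\mathbb{M}_f^2(\alpha)\le 36\frac{\alpha+3}{\alpha+1}$.

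Your suspicion that a square has been lost in the printed statement is well founded, and the paper itself supports it. Proposition \ref{last-p} gives $\mathbb{M}_\kappa^2(\alpha)=\frac{36(\alpha+3)(\alpha+5)}{(\alpha+2)(\alpha+4)}$. This lies below $36\frac{\alpha+3}{\alpha+1}$ because $(\alpha+1)(\alpha+5)<(\alpha+2)(\alpha+4)$. The unsquared inequality, by contrast, is weaker than your elementary estimate. Indeed, $(\alpha+1)(\alpha+5)=(\alpha+3)^2-4$, so your bound already gives $\mathbb{M}_f(\alpha)<6\frac{\alpha+3}{\alpha+1}$. In that form the statement would say nothing beyond the pointwise Schwarzian bound.

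So the trade-off is this. Your route is two lines and settles the literal statement with a great deal of room to spare. On the squared scale, though, it stops at $36\frac{\alpha+5}{\alpha+1}$, because the pointwise bound is spent with no further gain. Lowering $\alpha+5$ to $\alpha+3$ is the actual content of Shimorin's theorem. He obtains it from the univalence of $f$ via the Grunsky inequality, not from the size of $S(f)$ alone, and it is this sharper squared bound that he used for his estimates of $B(-1)$ and $B(-2)$.
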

By using this result, Shimorin obtained better bounds for the universal integral means spectrum $B(t)$ for $t=-1$ and $t=-2$. $B(-2)=1$ is known as the famous Brennan conjecture, see \cite{GM}. Later, following this line of thought, Hedenmalm and Shimorin established in \cite{HS-1} the current best upper bound estimate about $B(-2)$ and other universal integral means spectra. For more results about the Brennan conjecture and universal integral means spectrum, see \cite{Be}, \cite{HSo}.

For simplicity, in the sequel, for a univalent function $f\in \mathcal{S}$, we will write 
$$\mathbb{M}_f(\alpha):=\|S(f)\|_{{\bf M}_{\alpha, \alpha+4}}.$$
For a multiplier $g$ from ${\bf A}_{\alpha}^2$ to ${\bf A}_{\beta}^2$ with $\beta>\alpha>-1$, we will write
$$\mathbb{M}_g(\alpha, \beta)=\mathbb{M}_{g(z)}(\alpha, \beta):=\|g\|_{{\bf M}_{\alpha, \beta}}.$$
There is a close connection between the norm of the multiplication operator ${M}_{S(f)}$ and the Brennan conjecture. 
It has been shown in \cite{SS} that 
 \begin{proposition}\label{last-p}
${\bf (1)}$ Let $\alpha>-1$. We have
\begin{equation} \mathbb{M}_{\kappa}^2(\alpha)=\frac{36(\alpha+3)(\alpha+5)}{(\alpha+2)(\alpha+4)},\nonumber \end{equation}
Here, $\kappa$ is the well-known Koebe function, which is defined by
 $$\kappa(z):=\frac{z}{(1-z)^2}, z\in \Delta.$$

${\bf (2)}$ If some univalent function $f\in \mathcal{S}$ satisfies that
\begin{equation} \mathbb{M}_f^2(\alpha) \leq\mathbb{M}_{\kappa}^2(\alpha)=\frac{36(\alpha+3)(\alpha+5)}{(\alpha+2)(\alpha+4)},\nonumber \end{equation}
 for any $\alpha>0$. Then $\beta_f(-2)\leq 1$, that is to say, the Brennan conjecture is true for the function $f$. 
 \end{proposition}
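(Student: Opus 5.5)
The plan has two independent parts, one for each item of Proposition~\ref{last-p}.

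\textbf{Part (1).} A direct computation gives $S(\kappa)(z)=-6/(1-z^2)^2$. Hence
$$\mathbb{M}_\kappa(\alpha)=6\,\mathbb{M}_{(1-z^2)^{-2}}(\alpha,\alpha+4),$$
and it suffices to compute the multiplier norm of $g(z)=(1-z^2)^{-2}$.

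I would split $\phi=\phi_0(z^2)+z\phi_1(z^2)$ into its even and odd parts. Multiplication by $g$ preserves parity, and the two parts are orthogonal in every ${\bf A}^2_\alpha$. So the norm is the larger of the norms on the even and odd subspaces. On each subspace, the substitution $w=z^2$ turns the problem into multiplication by $(1-w)^{-2}$ between spaces with explicit monomial norms $\|z^m\|_\alpha^2=m!\,\Gamma(\alpha+2)/\Gamma(m+\alpha+2)$.

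For the \emph{lower bound} I would use the test functions $\phi_s(z)=(1-z^2)^{-s}$ and let $s\uparrow(\alpha+2)/2$. Both $\|g\phi_s\|_{\alpha+4}^2$ and $\|\phi_s\|_\alpha^2$ blow up like $1/(\alpha+2-2s)$. Their ratio should tend to $(\alpha+3)(\alpha+5)/((\alpha+2)(\alpha+4))$, computed through Beta-function asymptotics of the coefficient sums.

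For the \emph{upper bound} I would apply a Schur test to $M_g^*M_g$, viewed as an integral operator against the reproducing kernel of ${\bf A}^2_{\alpha}$. The test function would be $(1-|z|^2)^{-c}$, with $c$ tuned to the same critical exponent. The constants should then reproduce the same quotient of Gamma factors.

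I expect the hardest step in Part (1) to be the sharp upper bound. If the Schur test is not sharp, my fallback is to write $\|g\phi\|^2_{\alpha+4}$ as a quadratic form in the Taylor coefficients. I would then show that this form is dominated by $\frac{(\alpha+3)(\alpha+5)}{(\alpha+2)(\alpha+4)}\|\phi\|_\alpha^2$, comparing kernels term by term via log-convexity of Gamma ratios.

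\textbf{Part (2).} Fix $f\in\mathcal S$ and set $u=(f')^{-1/2}$, which is analytic and zero-free in $\Delta$. The classical identity $u''+\tfrac12 S(f)u=0$ gives $S(f)u=-2u''$. To avoid assuming $u\in{\bf A}^2_\alpha$ a priori, I would work with the dilations $u_r(z)=u(rz)$. These satisfy $S(f_r)u_r=-2u_r''$ with $f_r(z)=f(rz)/r$. Since the multiplier norm is monotone under dilation (by subharmonicity), one still has $\mathbb{M}_{f_r}(\alpha)\le\mathbb{M}_f(\alpha)$.

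The hypothesis $\mathbb{M}_f\le\mathbb{M}_\kappa$ then yields
$$4\|u_r''\|_{\alpha+4}^2\le \frac{36(\alpha+3)(\alpha+5)}{(\alpha+2)(\alpha+4)}\|u_r\|_\alpha^2 .$$
Expanding in Taylor coefficients, the left side is $\sum_n c_n(\alpha)|a_n|^2$ with explicit weights. These weights exceed the right-side weights times $n^{\,\cdot}$-growth factors for large $n$, except at a critical exponent. Comparing the two sides then gives a uniform-in-$r$ bound on a higher-weight norm of $u_r$ in terms of a lower-weight norm. Iterating this and letting $r\to1$ shows that $u$ lies in ${\bf A}^2_{\alpha}$ for every $\alpha$ above the threshold predicted by the Koebe case.

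Translating membership of powers of $f'$ in weighted Bergman spaces into integral means, via $\int_{-\pi}^{\pi}|u(re^{i\theta})|^2d\theta$ and the monotonicity of integral means, should give $\beta_f(-2)\le 1$. To reach $t=-2$ rather than $t=-1$, I would apply the same argument to a suitable power $(f')^{-p}$. Its defining second-order identity has $S(f)$ as the coefficient, plus a $(u')^2/u$ term that must be controlled.

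This last adjustment, and making the coefficient comparison sharp exactly at the Koebe constant, is what I expect to be the main obstacle in Part (2).
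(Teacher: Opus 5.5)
Your lower bound in Part (1) is fine, and slightly more economical than the paper's. The functions $(1-z^2)^{-s}$ with $2s<\alpha+2$ already lie in ${\bf A}^2_\alpha$, so you need no dilation step. The paper instead takes $(1-rz^2)^{-\lambda}$ with $2\lambda>\alpha+2$, lets $r\to1$ using Lemma \ref{lemma-key} and Lemma \ref{l-2}, and then sends $\lambda\downarrow(\alpha+2)/2$.

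\textbf{Part (1), upper bound.} This is the real content of (1), and here there is a gap. Schur's test on the disk only sees $|(1-z\bar\zeta)^{-\alpha-2}|$. It therefore estimates an operator on all of $L^2(dA_\alpha)$ and discards the cancellation that analyticity provides. Nothing in your sketch suggests it returns the Gamma quotient, and one should not expect it to. The coefficient fallback does not close the gap either. Multiplication by $(1-w)^{-2}$ gives a non-diagonal form (a second-order summation operator), while log-convexity of Gamma ratios only compares diagonal weights.

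The missing idea is the paper's (Theorem \ref{m-2} with $\beta=\alpha+4$). Apply the Cayley transform and set $h(w)=\phi(z(w))(w+i)^{-\alpha-2}$. Then multiplication by $(1-z^2)^{-\gamma}$, with $\gamma=(\beta-\alpha)/2$, becomes multiplication by $w^{-\gamma}$ from ${\bf A}^2_\alpha(\mathbb{C}_+)$ to ${\bf A}^2_\beta(\mathbb{C}_+)$. By the Paley--Wiener isometry (Lemma \ref{l-1}) this is the Riemann--Liouville integral $I^\gamma$ from $L^2(t^{-\alpha-1}dt)$ to $L^2(t^{-\beta-1}dt)$. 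That kernel is positive and dilation-homogeneous, so Minkowski's inequality (Lemma \ref{last}) gives exactly the constant $\Gamma(1+\alpha/2)/\Gamma(1+\beta/2)$, which matches the lower bound. A Schur-type bound does work, but only in these coordinates.

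\textbf{Part (2).} The paper does not prove this; it quotes it from \cite{SS}. Your route still has a genuine hole.
\begin{itemize}
\item With $u=(f')^{-1/2}$ you only control $\int|f'|^{-1}d\theta$. Since the hypothesis holds only for $\alpha>0$, you would get merely $\beta_f(-1)\le1$, which already follows from Koebe distortion.
\item For $v=(f')^{-1}$, the second-order relation $v''=-S(f)v+\frac{(v')^2}{2v}$ is nonlinear. The term $(v')^2/v$ is not controlled by any multiplier norm, and this is exactly where your sketch stops.
\end{itemize}

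The missing idea is that $v=u^2$ is a product of solutions of $y''+\frac12S(f)y=0$. Hence $v$ satisfies the linear equation $v'''+2S(f)v'+S(f)'v=0$, that is,
\[(v''+S(f)v)'=-S(f)v'.\]
Apply this to $v_r(z)=v(rz)$; there $S(f)$ is replaced by $r^2S(f)(rz)$, which Lemma \ref{lemma-key} controls. Use $\mathbb{M}_f(\alpha)$ on $v_r$ and $\mathbb{M}_f(\alpha+2)$ on $v_r'$. Combine this with the coefficientwise facts:
\begin{itemize}
\item $\|\psi'\|^2_{\alpha+2}\le(\alpha+2)(\alpha+3)\|\psi\|^2_\alpha$;
\item at high frequencies $\|\psi''\|^2_{\alpha+4}\approx\frac{\Gamma(\alpha+6)}{\Gamma(\alpha+2)}\|\psi\|^2_\alpha$;
\item at high frequencies $\|W\|^2_{\alpha+4}\approx\|W'\|^2_{\alpha+6}/((\alpha+6)(\alpha+7))$;
\item the low-frequency parts stay bounded in $r$.
\end{itemize}
If $\|v_r\|_\alpha\to\infty$, these force
\[\sqrt{(\alpha+2)(\alpha+3)(\alpha+4)(\alpha+5)}\le\mathbb{M}_f(\alpha)+\sqrt{\tfrac{(\alpha+2)(\alpha+3)}{(\alpha+6)(\alpha+7)}}\,\mathbb{M}_f(\alpha+2).\]
For the Koebe values, the right side equals $\frac{12}{(\alpha+2)(\alpha+6)}$ times the left side. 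This gives equality at $\alpha=0$ and strict inequality for $\alpha>0$. So under the hypothesis $(f')^{-1}\in{\bf A}^2_\alpha$ for every $\alpha>0$. Since $\int|f'(re^{i\theta})|^{-2}d\theta$ increases in $r$, this yields $\beta_f(-2)\le1$. This is consistent with the introduction's remark that Shimorin's argument uses $\mathbb{M}_f(\alpha)$ together with $\mathbb{M}_f(\alpha+2)$ for small $\alpha>0$.
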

 
Let $\mathcal{U}_R$ be the class of all univalent rational functions $f$ with $f\in\mathcal{S}$. In the recent paper \cite{Jin-c}, we have proved that
if $R\in \mathcal{U}_R$ has at least one critical point on the unit circle $\mathbb{T}$, i.e., there is at least one point $z_0$ on $\mathbb{T}$ such that $R'(z_0)=0$, then $\beta_R(-2)=1$. 
On the other hand, by checking carefully the arguments in the proof of Proposition $8$ in \cite[Page 1632]{SS}, we see that, if there is a constant $\varepsilon>0$ such that $$\mathbb{M}_f(\alpha)+\mathbb{M}_f(\alpha+2)\leq \mathbb{M}_{\kappa}(\alpha)+\mathbb{M}_{\kappa}(\alpha+2),$$
for $\alpha\in (0,\varepsilon)$, then $\beta_f({-2})\leq 1$. Moreover, for $R\in \mathcal{U}_R$ that have at least one critical point on $\mathbb{T}$, if there are two small constants $\epsilon>0, \eta>0$ such that $\mathbb{M}_R(\alpha)<\mathbb{M}_{\kappa}(\alpha)-\eta$
for $\alpha \in (-\epsilon, \epsilon)\cup (2-\epsilon,2+\epsilon)$, then we will obtain that $\beta_{R}(-2)<1$, which is a contradiction. This leads us to conjecture the following result holds. 
The first aim of this paper is to give a proof for the following theorem. 

\begin{theorem}\label{m-1}
For each $\alpha>-1$, we have $\mathbb{M}_R(\alpha)\geq \mathbb{M}_{\kappa}(\alpha)$ for all $R\in \mathcal{U}_R$ that have at least one critical point on $\mathbb{T}.$ 
\end{theorem}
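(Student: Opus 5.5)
The plan is to reduce the lower bound to a local computation near the boundary critical point. There are two inputs. The first is a general lower bound for multiplier norms coming from the boundary behavior of the symbol. The second is the exact form of the singularity of $S(R)$ at a critical point $z_0\in\mathbb{T}$ of a univalent rational $R$.

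\textbf{Step 1 (local form of $S(R)$ at $z_0$).} Since $R$ is univalent in $\Delta$ and rational, $R'(z_0)=0$ forces the critical point on $\mathbb{T}$ to be simple. A higher-order zero of $R'$ would make $R$ wrap an angle larger than $2\pi$ near $z_0$, contradicting univalence, just as for the Koebe function at $z=1$. So near $z_0$ we have $R'(z)=c(z-z_0)(1+O(z-z_0))$ with $c\neq 0$. This gives $N(R)(z)=\frac{1}{z-z_0}+h(z)$ with $h$ analytic near $z_0$. Substituting,
$$S(R)(z)=-\frac{1}{(z-z_0)^2}-\frac12\frac{1}{(z-z_0)^2}+O\!\left(\frac{1}{|z-z_0|}\right)=-\frac{3}{2}\frac{1}{(z-z_0)^2}+O\!\left(\frac1{|z-z_0|}\right).$$
For the Koebe function, $S(\kappa)(z)=-6/(1-z^2)^2$. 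At $z_0=1$ this has the same leading term $-\tfrac32(z-1)^{-2}$, but $\kappa$ also has the singularity at $z=-1$. So the comparison cannot be purely local. I will rotate so that $z_0=1$, and compare $\mathbb{M}_R(\alpha)$ with the norm of the model multiplier $g_0(z)=\tfrac{3}{2}(1-z)^{-2}$, rather than directly with $\mathbb{M}_\kappa(\alpha)$.

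\textbf{Step 2 (the norm is determined by the leading singularity).} I will show that if $g=g_0+g_1$, with $g_1$ having a singularity at $z_0$ that is of lower order by $|1-z|$, then $\|g\|_{{\bf M}_{\alpha,\alpha+4}}\ge \|g_0\|_{{\bf M}_{\alpha,\alpha+4}}$. To do this, take near-extremal test functions $\phi$ for $g_0$ and push them toward $z_0$ by composing with disk automorphisms $\varphi_a$, $a\to 1$, using the natural weighted composition $\phi\mapsto(\phi\circ\varphi_a)(\varphi_a')^{(\alpha+2)/2}$. This map is an isometry of ${\bf A}^2_\alpha$. The multiplier $g_0$ is essentially invariant under the corresponding dilations $\tfrac{z-1}{\cdot}$ at the boundary point. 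Meanwhile $g_1$, and the contributions from the other singularities of $S(R)$ (which lie outside $\overline\Delta$ or at other points of $\mathbb{T}$ with at most the same order), become negligible in ${\bf A}^2_{\alpha+4}$ relative to the mass concentrating at $z_0$. Hence $\mathbb{M}_R(\alpha)\ge \mathbb{M}_{g_0}(\alpha,\alpha+4)$.

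\textbf{Step 3 (identify the model norm with $\mathbb{M}_\kappa(\alpha)$).} This is the main obstacle. Step 2 only yields $\mathbb{M}_{g_0}(\alpha,\alpha+4)$, and I must show that this equals $\mathbb{M}_\kappa(\alpha)=\sqrt{36(\alpha+3)(\alpha+5)/((\alpha+2)(\alpha+4))}$ from Proposition~\ref{last-p}. The heuristic is that the two singularities $\pm1$ of $S(\kappa)$ decouple: $\mathbb{M}_\kappa$ is attained asymptotically by functions concentrating at one endpoint, and there $S(\kappa)\sim -\tfrac32(1-z)^{-2}$. I would first compute $\mathbb{M}_{(1-z)^{-2}}(\alpha,\alpha+4)$ directly by transferring to the upper half-plane via the Cayley map. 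There $(1-z)^{-2}$ becomes, up to the isometric change of weight, multiplication by a power of $\operatorname{Im} w$ composed with a translation-invariant structure. Diagonalizing with the Fourier--Laplace transform in $\operatorname{Re} w$ turns the norm into a supremum of a ratio of Gamma-type integrals. That supremum should produce the constant $\tfrac{4(\alpha+3)(\alpha+5)}{(\alpha+2)(\alpha+4)}$, so that multiplying by $(3/2)^2$ gives exactly $\mathbb{M}_\kappa^2(\alpha)$. If the direct computation resists, I would instead prove $\mathbb{M}_{g_0}\ge\mathbb{M}_\kappa$ by applying Step 2 to $\kappa$ itself. Since $S(\kappa)$ is even, the test functions concentrating at $+1$ and at $-1$ give asymptotically orthogonal contributions. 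It then remains to show that the norm of the sum is the maximum of the two local norms, which again gives equality with the common local norm $\mathbb{M}_{g_0}$.
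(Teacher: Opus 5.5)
\textbf{Verdict: there is a genuine gap.} Your Steps 1 and 2 use the same localization idea as the paper: isolate the leading term $-\tfrac32(z-z_0)^{-2}$ of $S(R)$ at the boundary critical point and test against functions concentrating there. But Step 3, where all the quantitative content of the theorem sits, is not proved.

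What you need is $\mathbb{M}_{g_0}(\alpha,\alpha+4)\ge\mathbb{M}_\kappa(\alpha)$ for your $g_0=\tfrac32(1-z)^{-2}$. Neither of your routes establishes this.
\begin{itemize}
\item The direct computation is only announced, and its predicted outcome is wrong. In fact $\mathbb{M}^2_{(1-z)^{-2}}(\alpha,\alpha+4)=\frac{16(\alpha+3)(\alpha+5)}{(\alpha+2)(\alpha+4)}$. With your constant $\frac{4(\alpha+3)(\alpha+5)}{(\alpha+2)(\alpha+4)}$, the factor $\tfrac94$ would give only $\tfrac14\mathbb{M}^2_\kappa(\alpha)$.
\item The fallback runs in the wrong direction. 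Step 2 applied to $\kappa$ gives $\mathbb{M}_\kappa\ge\mathbb{M}_{g_0}$, which is trivial anyway since $|g_0(z)|/|S(\kappa)(z)|=|1+z|^2/4\le 1$. You need the reverse inequality, i.e.\ an upper bound for $\mathbb{M}_\kappa$ by a local norm. ``The norm of the sum is the maximum of the two local norms'' is exactly that statement, and it is unproved. Asymptotic orthogonality of one family of test functions says nothing about $\|S(\kappa)\phi\|_{\alpha+4}$ for general $\phi$.
\end{itemize}

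\textbf{Where it goes wrong, and how to fix it.} The source of the trouble is the claim in Step 2 that $g_0$ is essentially invariant under the automorphisms pushing toward $1$. It is not, and computing what actually happens closes the gap. Take $\varphi_a(z)=(a-z)/(1-az)$ with $a\in(0,1)$, and the isometry $T_a\phi=(\phi\circ\varphi_a)(\varphi_a')^{(\alpha+2)/2}$ of ${\bf A}^2_\alpha$. A change of variables gives $\|g_0T_a\phi\|_{\alpha+4}=\|\widetilde g_a\phi\|_{\alpha+4}$, where
\[
\widetilde g_a=(g_0\circ\varphi_a)(\varphi_a')^2=\frac32\,\frac{(1+a)^2}{(1+z)^2(1-az)^2}\longrightarrow\frac{6}{(1-z^2)^2}=-S(\kappa)(z)\qquad(a\to1^-).
\]
Since $|\widetilde g_a(z)|(1-|z|^2)^2\le\|g_0\|_{\mathcal{B}_2}$, dominated convergence yields $\|g_0T_a\phi\|_{\alpha+4}\to\|S(\kappa)\phi\|_{\alpha+4}$ for every $\phi\in{\bf A}^2_\alpha$.

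The remainder $g_1$ (after rotating $z_0$ to $1$) lies in $\mathcal{B}_2$ and is $O(|1-z|^{-1})$ near $1$. It pulls back to a symbol of size $O\bigl((1-a)|1+z|^{-1}|1-az|^{-3}\bigr)$ once $\varphi_a(z)$ is near $1$. So it tends to $0$ pointwise, with the same domination, and this automatically covers all other singularities of $S(R)$.

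Hence Step 2, done correctly, gives $\mathbb{M}_R(\alpha)\ge\mathbb{M}_\kappa(\alpha)$ at once. The blow-up of the boundary double pole is $S(\kappa)$ itself. The singularity at $-1$ that worried you is created by the rescaling; it is not something to decouple.

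\textbf{Comparison with the paper.} The repaired argument is a genuinely different and cleaner route. The paper never identifies a model norm. Instead it:
\begin{itemize}
\item uses Lemma~\ref{lemma-key} with $a=re^{i\arg z_1}$ and the explicit test function $(1-rz)^{-\lambda}$, $\alpha+2<2\lambda<\alpha+3$;
\item bounds all non-leading terms uniformly in $r$ (Claim~\ref{claim-1}), while $\|(1-rz)^{-\lambda}\|_\alpha^2\to\infty$;
\item evaluates the leading ratio $\tfrac94\Gamma(\alpha+6)\Gamma(\lambda)^2/\bigl(\Gamma(\alpha+2)\Gamma(\lambda+2)^2\bigr)$ via Lemma~\ref{l-2};
\item lets $\lambda\downarrow\alpha/2+1$ to reach Shimorin's value of $\mathbb{M}^2_\kappa(\alpha)$ from Proposition~\ref{last-p}.
\end{itemize}
Your repaired route needs neither the asymptotics of Lemma~\ref{l-2} nor the known closed form for $\mathbb{M}_\kappa$.
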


In view of Theorem \ref{th-1}, two natural questions are the following
\begin{question} For a multiplier $g$ with $\|g\|_{{\mathcal B}_{(\beta-\alpha)/2}}=1$, what is its exact multiplier norm? \end{question}
\begin{question} What is the exact value of 
$$\mathbb{M}(\alpha, \beta):=\sup_{\|g\|_{{\mathcal B}_{(\beta-\alpha)/2}}=1}\|g\|_{{\bf M}_{\alpha, \beta}}?$$\end{question}

We will partially answer these two questions. We shall prove that
 \begin{theorem}\label{m-2}
Let $\beta>\alpha>-1$. Define $g_0(z):=(1-z^2)^{-\frac{\beta-\alpha}{2}}, z\in \Delta$. Then we have 
$$\mathbb{M}_{g_0}^2(\alpha, \beta)=\frac{1}{2^{\beta-\alpha}}\frac{\Gamma(\beta+2)}{\Gamma(\alpha+2)}\Big[\frac{\Gamma(1+\alpha/2)}{\Gamma(1+\beta/2)}\Big]^2.$$
Here $\Gamma$ is the usual Gamma function, see \cite{AAR}. 
\end{theorem}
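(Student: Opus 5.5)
The plan is to handle both inequalities in the sequence (Taylor coefficient) model of the spaces. Write $\gamma=(\beta-\alpha)/2$. For monomials one has
$$\|z^n\|_\alpha^2=\frac{n!\,\Gamma(\alpha+2)}{\Gamma(n+\alpha+2)}=:w_\alpha(n).$$
Also
$$g_0(z)=\sum_{j\ge0}\frac{(\gamma)_j}{j!}z^{2j},$$
which has positive coefficients and contains only even powers. So $M_{g_0}$ maps even functions to even functions and odd functions to odd functions. Even and odd parts are orthogonal in every ${\bf A}^2_\alpha$. Hence $\mathbb{M}_{g_0}(\alpha,\beta)$ is the larger of the norms of two restricted operators. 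The first acts on the span of $\{z^{2k}\}$ and the second on the span of $\{z^{2k+1}\}$. In orthonormal bases each restriction is a matrix with nonnegative entries
$$a_{n,m}=\frac{(\gamma)_{(n-m)/2}}{((n-m)/2)!}\sqrt{\frac{w_\beta(n)}{w_\alpha(m)}},\qquad n\ge m,\ n\equiv m \pmod 2.$$

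\emph{Lower bound.} I would use the test functions $\phi_s(z)=(1-z^2)^{-s}$ and let $s\uparrow 1+\alpha/2$, which is the threshold at which $\phi_s\notin{\bf A}^2_\alpha$. Then $g_0\phi_s=(1-z^2)^{-(s+\gamma)}$. Both norms are series of the form $\sum_k \big((\sigma)_k/k!\big)^2 w(2k)$, and both diverge as $s\uparrow 1+\alpha/2$. The asymptotics $(\sigma)_k/k!\sim k^{\sigma-1}/\Gamma(\sigma)$ and $w_\alpha(2k)\sim\Gamma(\alpha+2)(2k)^{-\alpha-1}$ show that the general terms behave like
$$\frac{\Gamma(\alpha+2)}{2^{\alpha+1}\Gamma(s)^2}\,k^{2s-\alpha-3}\quad\text{and}\quad\frac{\Gamma(\beta+2)}{2^{\beta+1}\Gamma(s+\gamma)^2}\,k^{2s+2\gamma-\beta-3},$$
respectively. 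The two exponents coincide because $2\gamma=\beta-\alpha$. Since both series diverge, the ratio of the norms tends to the ratio of these leading constants. This gives
$$\mathbb{M}_{g_0}^2(\alpha,\beta)\ \ge\ \frac{1}{2^{\beta-\alpha}}\frac{\Gamma(\beta+2)}{\Gamma(\alpha+2)}\Big[\frac{\Gamma(1+\alpha/2)}{\Gamma(1+\beta/2)}\Big]^2,$$
which is exactly the claimed value.

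\emph{Upper bound.} I would apply the Schur test to each of the two nonnegative matrices. The test vectors would be $p_m=w_\alpha(m)^{1/2}(m+c)^{-\lambda}$ and $q_n=w_\beta(n)^{1/2}(n+c')^{-\lambda}$, with the exponent chosen at the critical value suggested by the extremal family, namely the coefficient profile of $(1-z^2)^{-(1+\alpha/2)}$. The idea is that $g_0\cdot(1-z^2)^{-s}=(1-z^2)^{-(s+\gamma)}$ holds exactly, so the row sums $\sum_m a_{n,m}p_m$ can be computed in closed form as Gamma-function ratios via the Chu--Vandermonde identity. I would then choose the shifts $c,c'$ so that these row sums, and the corresponding column sums, are bounded by $q_n$ and $p_m$ times Gamma ratios. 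These ratios should be monotone in $n$ and tend to the constant above as $n\to\infty$. As a safer fallback, I would use a continuous Schur test on the integral form $\|g_0\phi\|_\beta^2=(\beta+1)\iint|g_0\phi|^2(1-|z|^2)^\beta\,dA/\pi$ against the reproducing kernel of ${\bf A}^2_\alpha$, with test function $|1-z^2|^{-(1+\alpha/2)}$ and the critical exponent. The odd part would be treated the same way with $z(1-z^2)^{-s}$, and its constant should be no larger.

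The main obstacle is the upper bound. Specifically, one must show that the Schur sums are dominated by their limiting value for \emph{every} index, not just asymptotically. That requires a monotonicity statement for the relevant ratios of Gamma functions, such as log-convexity of $\Gamma$ or Gautschi/Wendel-type inequalities. It may also require tuning the shifts $c,c'$ and checking the small-index terms separately.
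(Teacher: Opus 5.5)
Your lower bound is correct, and it is a more direct route than the paper's. Testing on $(1-z^2)^{-s}\in{\bf A}^2_\alpha$ with $s\uparrow 1+\alpha/2$ works because both norms blow up like $\sum_k k^{2s-\alpha-3}$ with the same exponent. This avoids the dilation Lemma~\ref{lemma-key}, which the paper needs because it tests on $(1-rz^2)^{-\lambda}$ with $\lambda$ above the threshold and lets $r\to1^-$ first.

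\textbf{The gap is the upper bound.} This is the substantive half of the theorem, and your proposal does not prove it. There are three concrete problems.

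\emph{Wrong test vectors.} Power-type vectors $w_\alpha(m)^{1/2}(m+c)^{-\lambda}$ admit no Chu--Vandermonde closed form. That identity applies only if the vector is exactly a coefficient profile $(s)_j/j!$.

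\emph{The fallback is not sharp.} A Schur test with kernel $|g_0(z)||K_\alpha(z,\zeta)|$ bounds a positive-kernel majorant of $M_{g_0}P_\alpha$. Applying that majorant to $f\equiv1$ shows its norm stays above $\iint|K_\alpha|\,dA_\alpha\,dA_\alpha>1$ as $\beta\downarrow\alpha$. The claimed constant tends to $1$, so this route loses sharpness.

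\emph{The odd block is left open.} With the right vectors, the discrete Schur test closes exactly on the even block. Take $p_j=\sqrt{w_\alpha(2j)}\,(1+\alpha/2)_j/j!$ and $q_k=\sqrt{w_\beta(2k)}\,(1+\beta/2)_k/k!$. Row sums equal $q_k$ by Chu--Vandermonde. Since $w_\beta(2k)(1+\beta/2)_k/k!=(1/2)_k/(\beta/2+3/2)_k$, Gauss's ${}_2F_1(1)$ formula gives column sums exactly $L\,p_j$, where $L=\frac{\Gamma(\beta/2+3/2)\Gamma(\alpha/2+1)}{\Gamma(\alpha/2+3/2)\Gamma(\beta/2+1)}$. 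By the duplication formula, $L$ is the claimed value.

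On the odd block this breaks down:
\begin{itemize}
\item The critical profile $z(1-z^2)^{-(1+\alpha/2)}$ leads to a ${}_3F_2(1)$ with no such evaluation.
\item The Gauss-summable choice $z(1-z^2)^{-(\alpha+3)/2}$ gives exactly $\frac{\Gamma(1+\beta/2)\Gamma((\alpha+1)/2)}{\Gamma(1+\alpha/2)\Gamma((\beta+1)/2)}$. This is strictly larger than $L$ by concavity of the digamma function; for example it is $2$ versus $3/2$ at $\alpha=0$, $\beta=2$.
\end{itemize}
So your claim that the odd constant ``should be no larger'' is precisely the per-index inequality you flagged as the main obstacle, and it remains unproved.

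\textbf{How the paper avoids this.} The missing idea is to pass to the upper half-plane, where the problem becomes dilation invariant.
\begin{itemize}
\item With $z=(w-i)/(w+i)$ one has $1-z^2=4iw/(w+i)^2$. Set $h(w)=\phi(z(w))(w+i)^{-\alpha-2}$ and $\gamma=(\beta-\alpha)/2$. Then $M_{g_0}:{\bf A}^2_\alpha(\Delta)\to{\bf A}^2_\beta(\Delta)$ becomes, up to explicit constants, $h\mapsto(-iw)^{-\gamma}h$ from ${\bf A}^2_\alpha(\mathbb{C}_+)$ to ${\bf A}^2_\beta(\mathbb{C}_+)$.
\item Under the Paley--Wiener isometry of Lemma~\ref{l-1} and the Laplace convolution theorem, this is the Riemann--Liouville integral $\tilde h\mapsto\frac{1}{\Gamma(\gamma)}\int_0^t(t-x)^{\gamma-1}\tilde h(x)\,dx$. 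It acts from $L^2(t^{-\alpha-1}dt)$ to $L^2(t^{-\beta-1}dt)$ and commutes with dilations.
\item Minkowski's inequality, in the form of the weighted Hardy inequality (Lemma~\ref{last} with $p=2$, $a=-\alpha-1$, $q=\gamma$), then yields the factor $\Gamma(1+\alpha/2)/\Gamma(1+\beta/2)$ exactly.
\end{itemize}
This bound holds for all $\phi$ at once, with no even/odd split. Dilation invariance is what makes the critical-power bound exact at every scale, and it is exactly what is lost, index by index, in the monomial basis of the disc.
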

\begin{remark}To the best of our knowledge, Theorem \ref{m-2} is novel in the literature. Also, note that $$S(\kappa)(z)=-\frac{6}{(1-z^2)^2}. $$Then the first part of Proposition \ref{last-p} follows by taking $\beta=\alpha+4$ in Theorem \ref{m-2}. 
\end{remark}
From Theorem \ref{m-2}, we obtain that  
\begin{corollary}Let $\beta>\alpha>-1$. Then
$$\mathbb{M}^2(\alpha, \beta)\geq \frac{1}{2^{\beta-\alpha}}\frac{\Gamma(\beta+1)}{\Gamma(\alpha+1)}\Big[\frac{\Gamma(1+\alpha/2)}{\Gamma(1+\beta/2)}\Big]^2.$$
\end{corollary}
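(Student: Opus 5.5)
The plan is to test the supremum defining $\mathbb{M}(\alpha,\beta)$ on the single function $g_0(z)=(1-z^2)^{-\frac{\beta-\alpha}{2}}$ from Theorem \ref{m-2}, and then compare Gamma factors. Write $\gamma=(\beta-\alpha)/2>0$.

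\textbf{Step 1: $\|g_0\|_{\mathcal{B}_\gamma}=1$.} For $z\in\Delta$ we have $|1-z^2|\geq 1-|z|^2>0$. Hence
$$|g_0(z)|(1-|z|^2)^{\gamma}=\Big(\frac{1-|z|^2}{|1-z^2|}\Big)^{\gamma}\leq 1,$$
so $\|g_0\|_{\mathcal{B}_\gamma}\leq 1$. For real $z=x\in(-1,1)$ we have $|1-x^2|=1-x^2$, so the quantity above equals $1$. Therefore $\|g_0\|_{\mathcal{B}_\gamma}=1$ exactly. In particular, by Theorem \ref{th-1}, $g_0$ is a multiplier from ${\bf A}^2_\alpha$ to ${\bf A}^2_\beta$, and it is admissible in the supremum defining $\mathbb{M}(\alpha,\beta)$.

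\textbf{Step 2: lower bound from Theorem \ref{m-2}.} Since $g_0$ is admissible,
$$\mathbb{M}^2(\alpha,\beta)\geq \mathbb{M}^2_{g_0}(\alpha,\beta)=\frac{1}{2^{\beta-\alpha}}\frac{\Gamma(\beta+2)}{\Gamma(\alpha+2)}\Big[\frac{\Gamma(1+\alpha/2)}{\Gamma(1+\beta/2)}\Big]^2 .$$

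\textbf{Step 3: comparison of Gamma factors.} By the functional equation $\Gamma(s+1)=s\Gamma(s)$,
$$\frac{\Gamma(\beta+2)}{\Gamma(\alpha+2)}=\frac{\beta+1}{\alpha+1}\cdot\frac{\Gamma(\beta+1)}{\Gamma(\alpha+1)}.$$
Since $\beta>\alpha>-1$, we have $\frac{\beta+1}{\alpha+1}>1$. All the Gamma factors involved are positive, because their arguments $\alpha+1$, $\beta+1$, $1+\alpha/2$, $1+\beta/2$ are all positive. So replacing $\Gamma(\beta+2)/\Gamma(\alpha+2)$ by $\Gamma(\beta+1)/\Gamma(\alpha+1)$ only decreases the right-hand side of Step 2, and the stated inequality follows. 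In fact Step 2 already gives the stronger bound.

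\textbf{Main obstacle.} There is no real obstacle once Theorem \ref{m-2} is assumed; all the difficulty lies in that theorem. The only point needing care is Step 1: one must check that $g_0$ is normalized exactly, not merely bounded, in $\mathcal{B}_\gamma$, and this is handled by the elementary inequality $|1-z^2|\geq 1-|z|^2$, with equality on the real axis.
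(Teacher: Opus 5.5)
Your proposal is correct and is the argument the paper intends when it says the corollary follows from Theorem \ref{m-2}: $g_0$ has $\mathcal{B}_{(\beta-\alpha)/2}$-norm exactly $1$, and your check via $|1-z^2|\geq 1-|z|^2$ with equality on the real axis is the right one, so $\mathbb{M}(\alpha,\beta)\geq \mathbb{M}_{g_0}(\alpha,\beta)$. Your Step 3 also correctly notes that the stated constant, with $\Gamma(\beta+1)/\Gamma(\alpha+1)$ in place of $\Gamma(\beta+2)/\Gamma(\alpha+2)$, is smaller by the factor $(\alpha+1)/(\beta+1)$ than what Theorem \ref{m-2} gives (it matches the unnormalized constant in (\ref{eq-d})), so the corollary follows a fortiori.
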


The paper is organized as follows. In the next section, we will establish some auxiliary results which will be used in our proof of main results in the paper. We will prove Theorem \ref{m-1} and \ref{m-2} in Section 3 and 4, respectively. In Section 5, we present some remarks.  

\section{{\bf Some Auxiliary Results}}
In this section, we present some auxiliary results which are needed in our proof of main theorems of this paper. First, we need the following integral representation of weighted Bergman spaces. Let $\mathbb{C}_{+} = \{w=u+iv: v>0\}$ be the upper half-plane and $\mathbb{R}_{+}=(0, \infty)$ be the set of positive real numbers. For $\alpha>-1$, we define the weighted Bergman space on the upper half-plane $\bf {A}_{\alpha}^2(\mathbb{C}_{+})$ as 
$$
{\bf {A}}_{\alpha}^2(\mathbb{C}_{+}) = \left\{ h\in \mathcal{A}(\mathbb{C}_{+}):  \|h\|_{\alpha}^{2}=\iint_{\mathbb{C}_{+}} |h(u + iv)|^{2} v^{\alpha} \frac{dudv}{\pi} < \infty \right\}.
$$
The following lemma will be useful. 
\begin{lemma}\label{l-1}Let $\alpha>-1$. Then $h$ belongs to ${\bf {A}}_{\alpha}^2(\mathbb{C}_{+})$ if and only if there is a function $\tilde{h} \in L_{\alpha}^{2}(\mathbb{R}_{+})$ such that
\begin{equation}\label{trans}
h(w) = \int_{0}^{\infty} \tilde{h}(t) e^{i w t} \, dt, \,\,w \in \mathbb{C}_{+},
\end{equation}
and we have the following norm identity 
$$
\|h\|_{\alpha}^{2} = \frac{\Gamma(\alpha+1)}{2^{\alpha}} \int_{0}^{\infty} |\tilde{h}(t)|^{2} \frac{dt}{t^{\alpha+1}}.
$$
Here $L_{\alpha}^{2}(\mathbb{R}_{+})$ denotes the weighted Lebesgue space consisting of all measurable functions $\tilde{h}$ on $\mathbb{R}_{+}$ with the norm
$$
\|\tilde{h}\|_{L_{\alpha}^{2}}^2=\int_{0}^{\infty} |\tilde{h}(t)|^{2} \frac{dt}{t^{\alpha+1}}.
$$
\end{lemma}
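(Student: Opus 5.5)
The plan is to prove Lemma \ref{l-1} as a Paley--Wiener type theorem, using Plancherel in the horizontal variable $u$ followed by an explicit integration in the vertical variable $v$.

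\emph{Norm identity.} Suppose first that $h$ is given by (\ref{trans}) with $\tilde h\in L^2_\alpha(\mathbb{R}_+)$. Fix $v>0$ and write
$$h(u+iv)=\int_0^\infty \tilde h(t)e^{-vt}e^{iut}\,dt.$$
This is the Fourier transform in $u$ of the function $t\mapsto \tilde h(t)e^{-vt}\chi_{(0,\infty)}(t)$. That function lies in $L^2(\mathbb{R})$ for each $v>0$, because $e^{-2vt}t^{\alpha+1}$ is bounded on $(0,\infty)$. By Plancherel (with the $2\pi$ normalization),
$$\int_{\mathbb{R}}|h(u+iv)|^2\,du=2\pi\int_0^\infty|\tilde h(t)|^2e^{-2vt}\,dt.$$
Multiplying by $v^\alpha/\pi$ and integrating over $v\in(0,\infty)$, Tonelli allows us to swap the order of integration. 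The inner integral is
$$\int_0^\infty v^\alpha e^{-2vt}\,dv=\frac{\Gamma(\alpha+1)}{(2t)^{\alpha+1}}.$$
This gives
$$\|h\|_\alpha^2=\frac{2\Gamma(\alpha+1)}{2^{\alpha+1}}\int_0^\infty|\tilde h(t)|^2\frac{dt}{t^{\alpha+1}},$$
which is exactly the claimed identity. Analyticity of $h$ in $\mathbb{C}_+$ follows by differentiating under the integral sign. This is justified because, by Cauchy--Schwarz with the weight $t^{\alpha+1}$, the integral converges locally uniformly on $\{v\ge v_0>0\}$. This proves the ``if'' direction together with the norm identity.

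\emph{Converse.} Let $h\in{\bf A}^2_\alpha(\mathbb{C}_+)$. Set $h_v(u)=h(u+iv)$. Since $\|h\|_\alpha<\infty$, $h_v\in L^2(\mathbb{R})$ for almost every $v$. To upgrade this to every $v$ with local uniform control, use the subharmonicity of $|h|^2$ on disks of radius $v/2$: this gives $|h(w)|^2\lesssim v^{-2-\alpha}\|h\|_\alpha^2$ pointwise, and with a little more care $\sup_{v\ge v_0}\|h_v\|_{L^2(\mathbb{R})}<\infty$ for each $v_0>0$. Then $h(\cdot+iv_0)$ is a function in the Hardy space $H^2$ of the half-plane $\{v>v_0\}$. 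The classical Paley--Wiener theorem yields $g_{v_0}\in L^2(0,\infty)$ with
$$h(w)=\int_0^\infty g_{v_0}(t)e^{i(w-iv_0)t}\,dt,\qquad v>v_0.$$
Define $\tilde h(t)=g_{v_0}(t)e^{v_0t}$. By uniqueness of Fourier transforms, $\tilde h$ does not depend on $v_0$, and (\ref{trans}) holds on all of $\mathbb{C}_+$. Finally, run the Plancherel--Tonelli computation above on $h$ itself; this is valid since all integrands are nonnegative. It shows $\int_0^\infty|\tilde h|^2t^{-\alpha-1}\,dt=2^\alpha\|h\|_\alpha^2/\Gamma(\alpha+1)<\infty$, so $\tilde h\in L^2_\alpha(\mathbb{R}_+)$.

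\emph{Main obstacle.} The delicate step is the converse: establishing the uniform horizontal $L^2$ bound on the half-planes $\{v>v_0\}$, so that the Hardy-space Paley--Wiener theorem applies, and checking that the resulting $\tilde h$ is consistent across different $v_0$. For the uniform bound I would first try the mean-value inequality for $|h|^2$ over disks $D(w,v/2)$ combined with Fubini in $u$. That argument gives $\|h_v\|_{L^2}^2\lesssim v^{-1-\alpha}\|h\|_\alpha^2$, which is bounded for $v\ge v_0$. The decay $h(w)\to0$ as $v\to\infty$, also needed for the Hardy-space theorem, follows from the same pointwise estimate.
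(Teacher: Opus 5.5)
Your proof is correct. The Plancherel--Tonelli computation yields exactly the constant $\Gamma(\alpha+1)/2^{\alpha}$ under the paper's normalization, and the mean-value/Fubini bound $\|h_v\|_{L^2(\mathbb{R})}^2\lesssim v^{-1-\alpha}\|h\|_\alpha^2$ is what puts the vertical translates $h(\cdot+iv_0)$ in $H^2$. The paper gives no proof of its own and only cites \cite{DGM}, whose argument, as far as I recall, follows the same standard Paley--Wiener route: represent the shifted functions in $H^2$, check independence of the shift, then apply Plancherel in $u$ and integrate in $v$.
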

\begin{remark}
${\bf {A}}_{\alpha}^2(\mathbb{C}_{+})$ is isometrically isomorphic to the space $L_{\alpha}^{2}(\mathbb{R}_{+})$ under the Fourier-Laplace type transform (\ref{trans}). See \cite{DGM} for a proof of this lemma.\end{remark}
We will need the following estimates.
\begin{lemma}\label{l-2}
Let $\alpha>-1, r\in (0,1), \lambda>0$. If $2\lambda>\alpha+2$, then  
\begin{equation}\label{a-e-1}
 \|{(1-rz^2)^{-\lambda}}\|_{\alpha}^2=\frac{\Gamma(\alpha+2)\Gamma(2\lambda-\alpha-2)}{2^{\alpha+1}[\Gamma(\lambda)]^2}\cdot\frac{1+{o}(1)}{(1-r^2)^{2\lambda-\alpha-2}},\, {\textup{as}}\,\, r\rightarrow 1^{-}. 
 \end{equation}
 and \begin{equation}\label{a-e-2}
   \|{(1-rz)^{-\lambda}}\|_{\alpha}^2=\frac{\Gamma(\alpha+2)\Gamma(2\lambda-\alpha-2)}{[\Gamma(\lambda)]^2}\cdot\frac{1+{o}(1)}{(1-r^2)^{2\lambda-\alpha-2}},\, {\textup{as}}\,\, r\rightarrow 1^{-}. 
 \end{equation} 
 If $0<2\lambda<\alpha+2$, then there is a constant $M>0$ such that for all $r\in (0,1)$,
 \begin{equation}\label{a-e-3}
   \|{(1-rz)^{-\lambda}}\|_{\alpha}^2\leq M.
 \end{equation}\end{lemma}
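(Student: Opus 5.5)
The plan is to compute both norms exactly as power series in $r$ and then read off the asymptotics with an Abelian-type comparison argument. The first step is the monomial norms. Polar coordinates and a Beta integral give
$$\|z^m\|_\alpha^2=(\alpha+1)\int_0^1 s^{m}(1-s)^{\alpha}\,ds=\frac{\Gamma(m+1)\Gamma(\alpha+2)}{\Gamma(m+\alpha+2)}.$$
Since the monomials are orthogonal in ${\bf A}^2_\alpha$ (the weight is radial), the binomial expansion
$$(1-rz)^{-\lambda}=\sum_{n\ge0}\frac{\Gamma(n+\lambda)}{\Gamma(\lambda)\,n!}\,r^nz^n$$
yields
$$\|(1-rz)^{-\lambda}\|_\alpha^2=\sum_{n\ge0}a_n\,x^n,\qquad x=r^2,\qquad a_n=\frac{\Gamma(\alpha+2)}{\Gamma(\lambda)^2}\,\frac{\Gamma(n+\lambda)^2}{n!\,\Gamma(n+\alpha+2)}.$$
Similarly, replacing $z$ by $z^2$, we get $\|(1-rz^2)^{-\lambda}\|_\alpha^2=\sum_n b_n x^n$ with the same $x=r^2$ and
$$b_n=\frac{\Gamma(\alpha+2)}{\Gamma(\lambda)^2}\,\frac{\Gamma(n+\lambda)^2\,(2n)!}{(n!)^2\,\Gamma(2n+\alpha+2)}.$$

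The second step is the coefficient asymptotics, using $\Gamma(n+a)/\Gamma(n+b)\sim n^{a-b}$. This gives
$$a_n\sim\frac{\Gamma(\alpha+2)}{\Gamma(\lambda)^2}\,n^{s-1},\qquad b_n\sim\frac{\Gamma(\alpha+2)}{2^{\alpha+1}\Gamma(\lambda)^2}\,n^{s-1},\qquad s:=2\lambda-\alpha-2.$$
In the second estimate the factor $2^{-\alpha-1}$ comes from $(2n)!/\Gamma(2n+\alpha+2)\sim(2n)^{-\alpha-1}$.

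The third step is the Abelian lemma, which I expect to be the main (though standard) point. It says: if $s>0$ and $c_n\sim Cn^{s-1}$, then $\sum c_nx^n=C\Gamma(s)(1-x)^{-s}(1+o(1))$ as $x\to1^-$. I would prove it by comparing with
$$(1-x)^{-s}=\sum_n\frac{\Gamma(n+s)}{\Gamma(s)\,n!}\,x^n,$$
whose coefficients are $\sim n^{s-1}/\Gamma(s)$. Given $\varepsilon>0$, choose $N$ so that $|c_n-C\,\Gamma(n+s)/n!|\le\varepsilon\,\Gamma(n+s)/n!$ for $n\ge N$. The first $N$ terms are bounded as $x\to1^-$, while $(1-x)^{-s}\to\infty$ because $s>0$, so they are negligible after multiplying by $(1-x)^s$. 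Letting $\varepsilon\to0$ proves the lemma. Applying it with $1-x=1-r^2$ gives (\ref{a-e-2}) and (\ref{a-e-1}) directly.

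The last step is the bounded case $0<2\lambda<\alpha+2$. Here $s-1<-1$, so $a_n=O(n^{s-1})$ is summable. Since all $a_n\ge0$ and $x=r^2<1$, we get $\|(1-rz)^{-\lambda}\|_\alpha^2\le\sum_na_n=:M<\infty$ for all $r\in(0,1)$, which is (\ref{a-e-3}).
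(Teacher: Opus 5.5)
Your proposal is correct and follows essentially the same route as the paper: expand in the orthogonal monomial basis, use Stirling asymptotics for the coefficients, compare with the binomial series of $(1-r^2)^{-(2\lambda-\alpha-2)}$ by an $\varepsilon$ head/tail split, and use summability of the coefficients for the bounded case. Your Abelian step, with limit constant $C\,\Gamma(s)$, actually tracks the factor $\Gamma(2\lambda-\alpha-2)$ more carefully than the paper's intermediate identity (\ref{l-eq-7}), which equates $(n+1)^{s-1}$ with $\Gamma(n+s)/(n!\,\Gamma(s))$ up to $1+o(1)$ and so drops a factor $\Gamma(s)$ that then reappears in the stated formulas.
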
 
\begin{remark}Here and in the sequel, ${o}(1)$ denotes a quantity depending on $r$ such that ${o}(1)\rightarrow 0$ as $r\rightarrow 1^{-}$, and which may vary from line to line.\end{remark}

\begin{proof}[Proof of Lemma \ref{l-2}]
First, for $\gamma>0$, we have 
\begin{equation}\label{l-eq-1}
  \frac{1}{(1-z)^{\gamma}}=\sum_{n=0}^{\infty}\frac{\Gamma(n+\gamma)}{n!\Gamma(\gamma)}z^n, z\in \Delta.\end{equation}
Also, for any $\phi=\sum_{n=0}^{\infty}a_nz^n\in \mathbf{A}_{\alpha}^2(\Delta)$, we have
\begin{equation}\label{l-eq-2}\|\phi\|_{\alpha}^2=\sum_{n=0}^{\infty}\frac{n!\Gamma(\alpha+2)}{\Gamma(n+\alpha+2)}|a_n|^{2}.
\end{equation}
For $r\in (0,1)$ and $\lambda>0$, from (\ref{l-eq-1}) and (\ref{l-eq-2}), we obtain that
  \begin{equation}\label{l-eq-3}
 \|{(1-rz^2)^{-\lambda}}\|_{\alpha}^2=\sum_{n=0}^{\infty}\frac{(2n)!\Gamma(\alpha+2)}{\Gamma(2n+\alpha+2)}\Big|\frac{\Gamma(n+\lambda)}{n!\Gamma(\lambda)}\Big|^2 r^{2n}, 
 \end{equation}
and 
 \begin{equation}\label{l-eq-4}
 \|{(1-rz)^{-\lambda}}\|_{\alpha}^2=\sum_{n=0}^{\infty}\frac{n!\Gamma(\alpha+2)}{\Gamma(n+\alpha+2)}\Big|\frac{\Gamma(n+\lambda)}{n!\Gamma(\lambda)}\Big|^2 r^{2n}.
 \end{equation}
By Stirling's formula, we have
 \begin{equation}
 \frac{\Gamma(n+\lambda)}{n!}=(n+1)^{\lambda-1}[1+\widetilde{o}(1)],\,\, {\textup{as}} \,\, n\rightarrow \infty.\nonumber 
 \end{equation}
 Here and later, we use $\widetilde{o}(1)$ to denote a sequence that tends to $0$, i.e., $o(1)\rightarrow 0, \, {\textup{as}}\,\, n \rightarrow \infty$, which may be different in different places. Then it follows from (\ref{l-eq-3}) that 
 \begin{eqnarray}\label{l-eq-5}
 \|{(1-rz^2)^{-\lambda}}\|_{\alpha}^2&=&\sum_{n=0}^{\infty}\frac{\Gamma(\alpha+2)}{[2(n+1)]^{\alpha+1}}[1+\widetilde{o}(1)]\cdot\frac{(n+1)^{2\lambda-2}}{[\Gamma(\lambda)]^2}[1+\widetilde{o}(1)]r^{2n}\nonumber \\
 &=&\frac{1}{2^{\alpha+1}}\frac{\Gamma(\alpha+2)}{[\Gamma(\lambda)]^2}\sum_{n=0}^{\infty}{(n+1)^{2\lambda-\alpha-3}}[1+\widetilde{o}(1)]r^{2n}.
 \end{eqnarray}
Similarly, from (\ref{l-eq-4}), we have 
 \begin{eqnarray}\label{l-eq-6}
 \|{(1-rz)^{-\lambda}}\|_{\alpha}^2=\frac{\Gamma(\alpha+2)}{[\Gamma(\lambda)]^2}\sum_{n=0}^{\infty}{(n+1)^{2\lambda-\alpha-3}}[1+\widetilde{o}(1)]r^{2n}.
 \end{eqnarray}
Note that there is a constant $C>0$ such that for $a<-1$,
$$\sum_{n=1}^{\infty}n^a r^{2n}\leq C,\,\,{\text{for all}}\,\, r\in (0,1).$$
Consequently, when $0<2\lambda<\alpha+2$ so that $2\lambda-\alpha-3<-1$, we see that (\ref{a-e-3}) holds. 

On the other hand, when $2\lambda>\alpha+2$, we obtain from Stirling's formula and \ref{l-eq-1} that 
  \begin{eqnarray}\label{l-eq-7}
 \lefteqn{\sum_{n=0}^{\infty}{(n+1)^{2\lambda-\alpha-3}}[1+\widetilde{o}(1)]r^{2n}=\sum_{n=0}^{\infty}\frac{\Gamma(n+2\lambda-\alpha-2)}{n!\Gamma(2\lambda-\alpha-2)}[1+\widetilde{o}(1)]r^{2n}}\nonumber \\
 &&=\frac{1}{{(1-r^2)^{2\lambda-\alpha-2}}}+\sum_{n=0}^{\infty}\frac{\Gamma(n+2\lambda-\alpha-2)}{n!\Gamma(2\lambda-\alpha-2)}\cdot [\widetilde{o}(1)]\cdot r^{2n}. 
 \end{eqnarray}
Let $\widetilde{o}(1):=b_n$. Note that for any $\varepsilon\in (0,1/2)$ there exist two constants $C_0>0, N_0\in \mathbb{N}$ such that $|b_n|\leq C_0$ for all $n\in \mathbb{N}_0$ and $|b_n|\leq \varepsilon/2$ for all $n\geq N_0$ in the last term of (\ref{l-eq-7}). 
It follows from again  (\ref{l-eq-1}) that
\begin{eqnarray}\label{l-eq-8}
\lefteqn{\sum_{n=0}^{\infty}\frac{\Gamma(n+2\lambda-\alpha-2)}{n!\Gamma(2\lambda-\alpha-2)}\cdot [\widetilde{o}(1)]\cdot r^{2n}} \nonumber \\
  && \leq C_0\sum_{n=0}^{N_0} \frac{\Gamma(n+2\lambda-\alpha-2)}{n!\Gamma(2\lambda-\alpha-2)}r^{2n}+\varepsilon/2\sum_{n={{n_0}+1}}^{\infty}\frac{\Gamma(n+2\lambda-\alpha-2)}{n!\Gamma(2\lambda-\alpha-2)}r^{2n} \nonumber \\
  && \leq C_0\sum_{n=0}^{N_0} \frac{\Gamma(n+2\lambda-\alpha-2)}{n!\Gamma(2\lambda-\alpha-2)}+\frac{\varepsilon/2}{{(1-r^2)^{2\lambda-\alpha-2}}}\nonumber \\
  &&:=C_1+\frac{\varepsilon/2}{{(1-r^2)^{2\lambda-\alpha-2}}}.
  \end{eqnarray}
Meanwhile, \begin{eqnarray}\label{l-eq-9}
\lefteqn{\sum_{n=0}^{\infty}\frac{\Gamma(n+2\lambda-\alpha-2)}{n!\Gamma(2\lambda-\alpha-2)}\cdot [\widetilde{o}(1)]\cdot r^{2n}} \nonumber \\
  && \geq \sum_{n=0}^{N_0} \frac{\Gamma(n+2\lambda-\alpha-2)}{n!\Gamma(2\lambda-\alpha-2)}b_n r^{2n}-\varepsilon/2\sum_{n={{n_0}+1}}^{\infty}\frac{\Gamma(n+2\lambda-\alpha-2)}{n!\Gamma(2\lambda-\alpha-2)}r^{2n} \nonumber \\
  && \geq -\sum_{n=0}^{N_0} \frac{\Gamma(n+2\lambda-\alpha-2)}{n!\Gamma(2\lambda-\alpha-2)}|b_n|-\frac{\varepsilon/2}{{(1-r^2)^{2\lambda-\alpha-2}}}\nonumber \\
  &&:=-C_2-\frac{\varepsilon/2}{{(1-r^2)^{2\lambda-\alpha-2}}}.
  \end{eqnarray}
Combining (\ref{l-eq-7}), (\ref{l-eq-8}) and (\ref{l-eq-9}), we obtain that
    \begin{eqnarray}
[{(1-r^2)^{2\lambda-\alpha-2}}]\sum_{n=0}^{\infty}{(n+1)^{2\lambda-\alpha-3}}[1+\widetilde{o}(1)]r^{2n}\leq 1+\varepsilon/2+C_1[{(1-r^2)^{2\lambda-\alpha-2}}], \nonumber
 \end{eqnarray}
 and 
     \begin{eqnarray}
[{(1-r^2)^{2\lambda-\alpha-2}}]\sum_{n=0}^{\infty}{(n+1)^{2\lambda-\alpha-3}}[1+\widetilde{o}(1)]r^{2n}\geq 1-\varepsilon/2-C_2[{(1-r^2)^{2\lambda-\alpha-2}}]. \nonumber
 \end{eqnarray}
Since $C_1$ and $C_2$ are independent of $r$, then we can find a constant $\delta\in (0,1)$ such that
$$C_1{(1-r^2)^{2\lambda-\alpha-2}}<\varepsilon/2,\,\,\text{and}\,\, C_2{(1-r^2)^{2\lambda-\alpha-2}}<\varepsilon/2,$$
for all $r\in (1-\delta, 1)$. Consequently, for any $\varepsilon\in (0,1/2)$, we see that 
    \begin{eqnarray}
1-\varepsilon\leq [{(1-r^2)^{2\lambda-\alpha-2}}]\sum_{n=0}^{\infty}{(n+1)^{2\lambda-\alpha-3}}[1+\widetilde{o}(1)]r^{2n}\leq 1+\varepsilon, \nonumber
 \end{eqnarray}
for all $r\in (1-\delta, 1)$. This is equivalent to  
 \begin{equation}\label{l-eq-10}
\sum_{n=0}^{\infty}{(n+1)^{2\lambda-\alpha-3}}[1+\widetilde{o}(1)]r^{2n}=\frac{1+{o}(1)}{(1-r^2)^{2\lambda-\alpha-2}},\, {\textup{as}}\,\, r\rightarrow 1^{-}. 
 \end{equation}
 Then, in view of (\ref{l-eq-10}), (\ref{a-e-1}) and (\ref{a-e-2}) respectively follow from (\ref{l-eq-5}) and (\ref{l-eq-6}).  The lemma is proved.
 \end{proof} 
The following key lemma will also be needed.
\begin{lemma}\label{lemma-key}Let $\beta>\alpha>-1$. If $g$ is a multiplier from ${\bf A}_{\alpha}^2$ to ${\bf A}_{\beta}^2$, then for any $a\in \Delta$, 
$$\mathbb{M}_{g(az)}(\alpha, \beta)\leq \mathbb{M}_{g}(\alpha, \beta).$$
\end{lemma}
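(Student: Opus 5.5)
The plan is to express the dilated symbol $g(az)$ as an average of rotations of $g$ and then use the rotation invariance of the weighted Bergman norms. For $|a|=1$ the claim is immediate (indeed it is an equality). For $\theta\in\mathbb{R}$ set $(U_\theta\phi)(z)=\phi(e^{i\theta}z)$. Since the weight $(1-|z|^2)^{\alpha}$ is radial, $U_\theta$ is a unitary operator on ${\bf A}^2_{\alpha}$ for every $\alpha>-1$, with inverse $U_{-\theta}$. A direct check gives
$$g(e^{i\theta}z)\phi(z)=U_\theta\big(M_g(U_{-\theta}\phi)\big)(z).$$
Hence $\mathbb{M}_{g(e^{i\theta}z)}(\alpha,\beta)=\mathbb{M}_g(\alpha,\beta)$. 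Writing $a=re^{i\theta_0}$ with $0\le r<1$, this rotation step also reduces the general case to $a=r\in[0,1)$.

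For fixed $z\in\Delta$, the function $\zeta\mapsto g(\zeta z)$ is analytic on the disk $|\zeta|<1/|z|$, which contains the closed unit disk. The Poisson representation therefore applies and gives
$$g(rz)=\int_{-\pi}^{\pi}P_r(t)\,g(e^{it}z)\,\frac{dt}{2\pi},\qquad P_r(t)=\frac{1-r^2}{1-2r\cos t+r^2}>0 .$$
So, at least formally, $M_{g(r\cdot)}=\int P_r(t)\,U_tM_gU_{-t}\,\frac{dt}{2\pi}$. Since $\int P_r\,dt/2\pi=1$ and each operator in the average has norm $\mathbb{M}_g(\alpha,\beta)$, the triangle inequality yields $\mathbb{M}_{g(rz)}(\alpha,\beta)\le \mathbb{M}_g(\alpha,\beta)$.

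The main obstacle is justifying this operator-valued average rigorously. I would do it weakly. Take $\phi\in{\bf A}^2_\alpha$ and $\psi\in{\bf A}^2_\beta$, and write $\langle\cdot,\cdot\rangle_\beta$ for the ${\bf A}^2_\beta$ inner product. Insert the Poisson formula into
$$\langle g(r\cdot)\phi,\psi\rangle_\beta=(\beta+1)\iint_\Delta g(rz)\phi(z)\overline{\psi(z)}(1-|z|^2)^\beta\frac{dxdy}{\pi}$$
and interchange the $t$-integral with the area integral by Fubini. The interchange is legitimate because, for each $t$, the Cauchy--Schwarz inequality in ${\bf A}^2_\beta$ gives
$$(\beta+1)\iint_\Delta|g(e^{it}z)\phi(z)\psi(z)|(1-|z|^2)^\beta\frac{dxdy}{\pi}\le \|g(e^{it}\cdot)\phi\|_\beta\|\psi\|_\beta\le \mathbb{M}_g(\alpha,\beta)\|\phi\|_\alpha\|\psi\|_\beta .$$
This bound is uniform in $t$, and it stays integrable against $P_r(t)\,dt/2\pi$. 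After the interchange,
$$|\langle g(r\cdot)\phi,\psi\rangle_\beta|\le \int P_r(t)\,\mathbb{M}_g(\alpha,\beta)\|\phi\|_\alpha\|\psi\|_\beta\,\frac{dt}{2\pi}=\mathbb{M}_g(\alpha,\beta)\|\phi\|_\alpha\|\psi\|_\beta .$$

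It remains to take the supremum over $\psi$, and this needs $g(r\cdot)\phi\in{\bf A}^2_\beta$. That holds because $g(rz)$ is bounded on $\Delta$ and $\beta>\alpha$. Alternatively, one can first restrict $\psi$ to polynomials, which are dense in ${\bf A}^2_\beta$, and then let $\phi$ range over all of ${\bf A}^2_\alpha$. Taking these suprema gives $\|g(r\cdot)\phi\|_\beta\le \mathbb{M}_g(\alpha,\beta)\|\phi\|_\alpha$, which is the lemma.
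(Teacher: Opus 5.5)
Your proof is correct, and it takes a different route from the paper's.

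\textbf{The paper's route.} The paper proves that $a\mapsto g(az)$ is analytic from $\Delta$ into $\mathcal{B}_{\gamma}$, $\gamma=(\beta-\alpha)/2$, using power-series manipulations of the difference quotients. From this it deduces that $a\mapsto g(az)\phi$ is an analytic $\mathbf{A}^2_\beta$-valued map (Sub-Lemma 2.4). It then proves separately that this map is continuous up to $\mathbb{T}$ (Sub-Lemma 2.5). Finally it combines rotation invariance on $\mathbb{T}$ with the subharmonicity of $a\mapsto\|g(az)\phi\|_\beta$ and the maximum principle.

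\textbf{Your route.} You apply the scalar Poisson formula to $\zeta\mapsto g(\zeta z)$. For each fixed $z$ this function is analytic on a neighbourhood of $\overline{\Delta}$, so no boundary regularity of a vector-valued map is ever needed. The only analytic input is one Fubini interchange. You justify it correctly by the bound $\mathbb{M}_g(\alpha,\beta)\|\phi\|_\alpha\|\psi\|_\beta$, which is uniform in $t$ and follows from rotation invariance plus Cauchy--Schwarz. Then the membership $g(r\cdot)\phi\in\mathbf{A}^2_\beta$ (bounded symbol, $\beta>\alpha$) lets you recover the norm by duality.

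\textbf{How the two compare.} In effect you write $M_{g(r\cdot)}$ as the weak average $\int P_r(t)\,U_tM_gU_{-t}\,dt/2\pi$ of operators unitarily equivalent to $M_g$. This is the mechanism hidden inside the maximum principle for subharmonic norms, made explicit.
\begin{itemize}
\item Your version is shorter and self-contained. It avoids the more delicate parts of the paper's argument: vector-valued holomorphy, uniform control of the difference quotients in $\mathcal{B}_\gamma$, and continuity at the boundary.
\item The paper's route buys extra structural information: $a\mapsto g(az)$ is holomorphic into $\mathcal{B}_\gamma$, and $a\mapsto\|g(az)\phi\|_\beta$ is subharmonic. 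This is more than the lemma requires.
\end{itemize}
Your closing alternative (restricting $\psi$ to polynomials) is unnecessary. It would also need an extra step identifying the representing element with $g(r\cdot)\phi$, so you can simply drop it.
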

\begin{remark}
This result has been used in the paper \cite{SS} for the case $\beta=\alpha+4$. For the convenience of the reader, we will provide a detailed proof of Lemma \ref{lemma-key}, which also clarifies some imprecise statements in \cite[page 2310, lines 17-20]{Jin-1}.
\end{remark}
\begin{proof}[Proof of Lemma \ref{lemma-key}]
Let $\gamma=(\beta-\alpha)/2>0$. We first show that if $g$ is a multiplier from ${\bf A}_{\alpha}^2$ to ${\bf A}_{\beta}^2$, then the map $\dag: a \mapsto g(az)$ is analytic from the unit disk into $\mathcal{B}_{\gamma}$.
Since $g \in \mathcal{B}_{\gamma}$, then for a fixed $a \in \Delta$ and any $z \in \Delta$, we have $1-|z|^2\leq 1-|az|^2$ so that 
$$
|g(az)|(1-|z|^2)^{\gamma} =|g(az)|(1-|az|^2)^{\gamma}\Big[\frac{1-|z|^2}{1-|az|^2}\Big]^{\gamma} \leq \|g\|_{\mathcal{B}_{\gamma}}.
$$
This yields that $g(az) \in \mathcal{B}_\gamma$ and the mapping $\dag$ is well-defined.

We will show that $\dag$ is analytic at any point $a_0 \in \Delta$. We define $\widetilde{g}(z) = z g'(a_0z)$ and let
\begin{equation}\label{lemma-k}
\mathbf{Q}_{a_0, \tau}(z):= \frac{g((a_0+\tau)z) - g(a_0z)}{\tau}.
\end{equation}
In the following, we assume that $\tau$ satisfies $|\tau|<1-|a_0|$ so that $(a_0+\tau)z|<1$ and $g((a_0+\tau)z$ converges in $\Delta$.
We will prove that \begin{equation}\label{equation}
\lim_{\tau \to 0} \|\mathbf{Q}_{a_0,\tau}-\widetilde{g}\|_{\mathcal{B}_\gamma} = 0.\end{equation}
Let $$g(z)=\sum_{n=1}^{\infty}b_nz^n.$$ Then we have $g'(z)=\sum_{n=1}^{\infty}nb_nz^{n-1}$. We first consider the case $a_0=0$. We have
\begin{eqnarray} 
\mathbf{Q}_{0,\tau}(z)-\widetilde{g}(z)&=&\frac{g(\tau z)-g(0)}{\tau}-zg'(0)\nonumber \\
&=&\frac{1}{\tau}[\sum_{n=0}^{\infty}b_n(\tau z)^n-b_0-\tau zb_1]\nonumber \\
&=&z^2\tau\sum_{n=2}^{\infty}b_n(\tau z)^{n-2}.\nonumber
\end{eqnarray}
It is easy to see that $\sum_{n=2}^{\infty}b_n z^{n-2}$
converges in $\Delta$. Then there are two constants $C_1>0, \delta\in (0,1)$ such that $|\sum_{n=2}^{\infty}b_n(\tau z)^{n-2}|<C_1
$ for all $z\in \Delta$ when $|\tau|<\delta$.  It follows that 
$z^2\tau\sum_{n=2}^{\infty}b_n(\tau z)^{n-2}$
converges uniformly to $0\,\, {\text as}\,\, \tau\to 0.$ 
This implies that (\ref{equation}) holds for $a_0=0$. 

When $a_0\neq 0$, we have
\begin{eqnarray}\label{remo-1} 
 \lefteqn{\mathbf{Q}_{a_0,\tau}(z)-\widetilde{g}(z)}
 \nonumber \\
 &&\quad=\frac{1}{\tau}\Big[\sum_{n=0}^{\infty}(1+\frac{\tau}{a_0})^nb_n(a_0z)^n-\sum_{n=0}^{\infty}b_n(a_0z)^n-\sum_{n=1}^{\infty}n\tau b_nz(a_0z)^{n-1}\Big]\nonumber \\
&&\quad=\frac{1}{\tau}\Big[\sum_{n=2}^{\infty}\sum_{k=2}^n \binom{n}{k}(\frac{\tau}{a_0})^{k}b_n(a_0 z)^{n}\Big]=\frac{\tau}{{a_0}^{2}}\Big[\sum_{n=2}^{\infty}\sum_{k=2}^n \binom{n}{k}(\frac{\tau}{a_0})^{k-2}b_n(a_0 z)^{n}\Big].
\end{eqnarray}
Since $|a_0|+|\tau|<1$, then we have $\lambda:=1-|a_0+\tau|>0$. Meanwhile, we know that 
$\sum_{n=0}^{\infty}(1+\frac{\tau}{a_0})^nb_n z^n$
converges in the disk centered at the origin with radius $R_1=1/|1+\frac{\tau}{a_0}|=\frac{|a_0|}{|a_0+\tau|}$. It follows from (\ref{remo-1}) that 
$$\sum_{n=2}^{\infty}\sum_{k=2}^n \binom{n}{k}(\frac{\tau}{a_0})^{k-2}b_n z^{n}$$ 
converges in the disk $\Delta_{R}$ centered at the origin with radius $R=\min\{1, R_1\}$. Note that $R_1=\frac{|a_0|}{1-\lambda}>|a_0|$ so that $a_0z$ is an interior point of the disk $\Delta_{R}$. 
Consequently, there is a constant $C_2>0$ such that $$|\sum_{n=2}^{\infty}\sum_{k=2}^n \binom{n}{k}(\frac{\tau}{a_0})^{k-2}b_n(a_0 z)^{n}|<C_2
$$
for all $z\in \Delta$.  From again (\ref{remo-1}), we see that 
$\mathbf{Q}_{a_0,\tau}(z)-\widetilde{g}(z)$
converges uniformly to $0\,\, {\text as}\,\, \tau\to 0.$ This proves that (\ref{equation}) holds for each $a_0\neq0$. 
This means that the mapping $\dag: a \mapsto g(az)$ is analytic from the unit disk to $\mathcal{B}_{\gamma}$.

Now, for a fixed $\phi \in \mathbf{A}_{\alpha}^2$, by using the above notations and arguments, we obtain that $\widetilde{g}\phi$ belongs to $\mathbf{A}_{\beta}^2$ and for each $a_0\in \Delta$,
$$
\|\mathbf{Q}_{a_0, \tau}\phi-\widetilde{g}\phi\|_{\beta}\leq \sqrt{(\beta+1)/{(\alpha+1)}} \|\mathbf{Q}_{a_0, \tau}-\widetilde{g}\|_{\mathcal{B}_{(\beta-\alpha)/2}}\|\phi\|_{\alpha},\nonumber
$$
when $\tau$ is close enough to $0$. It follows that 
$$
\lim_{\tau \to 0} \|\mathbf{Q}_{a_0, \tau}\phi-\widetilde{g}\phi\|_{\beta}=0,\,\,{\text{for each}\,\,a_0\in \Delta}.
$$
This proves that 
\begin{sub-lemma}\label{sub-1}
For a fixed $\phi \in \mathbf{A}_{\alpha}^2$, the mapping $\ddag: a\mapsto g(az)\phi(z)$ is analytic from the unit disk to $\mathbf{A}_{\beta}^2$. 
\end{sub-lemma} 

Next we shall prove that
\begin{sub-lemma}\label{sub-2}
For a fixed $\phi \in \mathbf{A}_{\alpha}^2$,  $$
\lim_{\Delta \ni a \to e^{i\theta}} \| g(a z)\phi(z) - g(e^{i\theta}z)\phi(z) \|_{\beta} = 0,$$
for any $\theta \in (-\pi,\pi]$.
\end{sub-lemma}  
We write $g_a(z)=g(az)$. As noted above, we obtain that $\|g_a\|_{\mathcal{B}_{\beta-\alpha}}\leq \|g\|_{\mathcal{B}_{\beta-\alpha}}$ so that both $g_a\phi$ and $g\phi$ belong to $\mathbf{A}_{\beta}^2$.  Then from 
\begin{equation} 
\| g(a z)\phi(z) - g(e^{i\theta}z)\phi(z) \|_{\beta}^2 \leq 2\|g_a(z)\phi(z)\|_{\beta}^2+2\|g(e^{i\theta}z)\phi(z)\|_{\beta}^2\leq 4 \|g\|_{\mathcal{B}_{(\beta-\alpha)/2}}^2 \|\phi\|_{\alpha}^2, \nonumber 
\end{equation}
we see that, for any given $\varepsilon>0$, there is a constant $r_0\in (0,1)$ such that
\begin{equation}\label{lemma-1}
{\bf Int_{1}}=\frac{\beta+1}{\pi}\int_{|z|>r_0}|g(a z)\phi(z) - g(e^{i\theta}z)\phi(z)|^{2}(1-|z|^2)^{\beta}dxdy<\varepsilon. 
\end{equation}
We proceed to consider the following integral
\begin{equation}\label{lemma-1.5}
{\bf Int_{2}}=\frac{\beta+1}{\pi}\int_{|z|\leq r_0}|g(a z)\phi(z) - g(e^{i\theta}z)\phi(z)|^{2}(1-|z|^2)^{\beta}dxdy. 
\end{equation}
Note that, on the closed disk $\{ |z| \leq r_0 \}$, $g(az)$ converges uniformly to $g(e^{i\theta}z)$ as $\Delta \ni a \to e^{i\theta}$, i.e.,
$$
\lim_{\Delta \ni a \to e^{i\theta}} \sup_{|z| \leq r_0} |g(az)-g(e^{i\theta}z)| = 0.
$$
Hence for given $\varepsilon>0$, there is a constant $\delta>0$ such that  $$
\sup_{|z| \leq r_0} |g(az)-g(e^{i\theta}z)|(1-|z|^2)^{(\beta-\alpha)/2}<\sqrt{\varepsilon},
$$
for all $a\in \Delta$ with $|a-e^{i\theta}|<\delta$.  Consequently, when $a\in \Delta$ with $|a-e^{i\theta}|<\delta$, we have 
\begin{equation}\label{lemma-2}
{\bf Int_{2}}\leq {\varepsilon} \frac{\beta+1}{\pi} \int_{|z|\leq r_0}|\phi(z)|^{2}(1-|z|^2)^{\alpha}dxdy\leq {\varepsilon} \frac{\beta+1}{(\alpha+1)\pi}\|\phi\|_{\alpha}^2.
\end{equation}
From (\ref{lemma-1}), \ref{lemma-1.5} and (\ref{lemma-2}), we conclude that $$
\lim_{\Delta \ni a \to e^{i\theta}} \| g(a z)\phi(z) - g(e^{i\theta}z)\phi(z) \|_{\beta}^2 = 0,$$
for any $\theta \in (-\pi,\pi]$. This proves Sub-Lemma \ref{sub-2}. 

Finally, for any $\theta \in (-\pi, \pi]$, it is easy to check that $\|\phi(e^{i\theta} z)\|_{\alpha}=\|\phi(z)\|_{\alpha}$ for any $\phi \in \mathbf{A}_{\alpha}^2$. Then we can obtain by a change of variables that $$\mathbb{M}_{g(e^{i\theta}z)}(\alpha, \beta)=\mathbb{M}_g(\alpha, \beta),$$
for any $\theta \in (-\pi, \pi]$. On the other hand, by means of Sub-Lemma \ref{sub-1} and \ref{sub-2}, we know that, for a fixed $\phi \in \mathbf{A}_{\alpha}^2$, the mapping 
$$\ddag: a\mapsto g(az)\phi(z),$$
is analytic from the unit disk $\Delta$ to $\mathbf{A}_{\beta}^2$ and continuous on the unit circle. Then $\|g(az)\phi(z)\|_{\beta}$ is subharmonic in $\Delta$, see \cite{HP}. It follows from the maximum modulus principle that 
$$\mathbb{M}_{g_a}(\alpha, \beta)\leq\mathbb{M}_g(\alpha, \beta),$$
for any $a\in \Delta$. This completes the proof of Lemma \ref{lemma-key}.
\end{proof}

We also need the following well-known weighted Hardy inequality. For completeness, we include a standard proof here for this result. 
\begin{lemma}\label{last}Let $p>1, q>0$, $a<p-1$. Let $f$ be a measurable function on $\mathbb{R}_{+}$ such that $\int_0^\infty x^{a}|f(x)|^p\,dx<\infty$. Then we have
\begin{equation}\label{whardy}
\int_0^\infty x^{a}\Bigl|\frac{f_q(x)}{x^q}\Bigr|^pdx\leq C_{a,p,q}^p\int_0^\infty x^{a}|f(x)|^p\,dx,
\end{equation}
where 
$$f_q(x)=\frac{1}{\Gamma(q)}\int_0^x (x-t)^{q-1}f(t)\,dt, \, \,C_{a,p,q}=\frac{\Gamma\bigl(1-\frac{a+1}{p}\bigr)}{\Gamma\bigl(q+1-\frac{a+1}{p}\bigr)}.
$$
\end{lemma}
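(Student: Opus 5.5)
The plan is to write the fractional integral $f_q(x)/x^q$ as a Mellin-type convolution on the multiplicative group $\mathbb{R}_+$ and then apply Minkowski's integral inequality. Substitute $t=xs$ with $s\in(0,1)$; this gives
\[
\frac{f_q(x)}{x^q}=\frac{1}{\Gamma(q)}\int_0^1 (1-s)^{q-1} f(xs)\,ds .
\]
The operator $x\mapsto f_q(x)/x^q$ is therefore an average of the dilations $f(x\,\cdot\, s)$, weighted by the kernel $(1-s)^{q-1}/\Gamma(q)$, which is positive and integrable on $(0,1)$ because $q>0$.

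Next, apply Minkowski's integral inequality in $L^p(\mathbb{R}_+, x^a\,dx)$, which is valid because $p>1$:
\[
\Bigl(\int_0^\infty x^a\Bigl|\frac{f_q(x)}{x^q}\Bigr|^p dx\Bigr)^{1/p}\le \frac{1}{\Gamma(q)}\int_0^1 (1-s)^{q-1}\Bigl(\int_0^\infty x^a|f(xs)|^p\,dx\Bigr)^{1/p} ds .
\]
The dilation norm is computed by the change of variables $y=xs$:
\[
\int_0^\infty x^a|f(xs)|^pdx=s^{-a-1}\int_0^\infty y^a|f(y)|^pdy .
\]
Substituting this back, the right-hand side becomes $\|f\|$ times the constant
\[
\frac{1}{\Gamma(q)}\int_0^1 (1-s)^{q-1}s^{-(a+1)/p}\,ds .
\]
This is a Beta integral, $B\bigl(1-\tfrac{a+1}{p},q\bigr)/\Gamma(q)$. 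It is finite exactly when $1-\frac{a+1}{p}>0$, that is, when $a<p-1$, which is the hypothesis. Writing the Beta function in terms of Gamma functions and cancelling $\Gamma(q)$ yields
\[
\frac{\Gamma\bigl(1-\frac{a+1}{p}\bigr)}{\Gamma\bigl(q+1-\frac{a+1}{p}\bigr)}=C_{a,p,q}.
\]
Raising both sides to the $p$-th power gives (\ref{whardy}).

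The main point needing care is measurability and the legitimacy of Minkowski's inequality. I would first argue with $|f|$ in place of $f$; then everything is nonnegative, Tonelli's theorem applies, and the bound for $f$ follows from $|f_q|\le (|f|)_q$. Finiteness of $\int_0^\infty x^a|f|^p\,dx$, together with local integrability of $f$ (which follows from Hölder's inequality when $a<p-1$), ensures that $f_q(x)$ is defined for almost every $x$. No sharpness claim is needed here, so the argument is complete once the Beta integral is evaluated.
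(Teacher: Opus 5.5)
Your proof is correct and follows the paper's argument step for step: the substitution $t=xs$, Minkowski's integral inequality in $L^p(x^a\,dx)$, the dilation identity $\int_0^\infty x^a|f(xs)|^p\,dx=s^{-a-1}\int_0^\infty y^a|f(y)|^p\,dy$, and evaluation of the resulting Beta integral, which is finite exactly when $a<p-1$. Your extra step of first working with $|f|$, so that Tonelli applies and $f_q$ is defined almost everywhere, is a small gain in rigor that the paper leaves implicit.
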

\begin{proof}
For fixed $x>0$, let $t = xs$, then $s \in (0,1)$ and
$$
\frac{f_q(x)}{x^q} = \frac{1}{\Gamma(q)} \int_0^1 (1-s)^{q-1} f(xs)ds.
$$

Then, by using Minkowski's inequality, we obtain that
$$
\left(\int_0^\infty x^a \left| \frac{f_q(x)}{x^q} \right|^p dx \right)^{1/p}
\leq \int_0^1 b(s) \left( \int_0^\infty x^a |f(xs)|^p dx \right)^{1/p}ds,
$$
here, $b(s) = {(1-s)^{q-1}}/{\Gamma(q)}$. For fixed $s>0$, let $y = xs$ so that
$$
\int_0^\infty x^a |f(xs)|^pdx = s^{-a-1} \int_0^\infty y^a |f(y)|^p dy.
$$
Consequently, 
\begin{equation}\label{w-1}\left( \int_0^\infty x^a \left| \frac{f_q(x)}{x^q} \right|^p dx \right)^{1/p}
\leq  \left( \int_0^\infty y^a |f(y)|^p dy \right)^{1/p}\int_0^1 b(s) s^{-(a+1)/p}ds.\end{equation}
On the other hand, by the definition of Gamma function, we have 
\begin{equation}\label{w-2}
\int_0^1 b(s) s^{-(a+1)/p}ds=\frac{1}{\Gamma(q)} \int_0^1 (1-s)^{q-1}s^{-(a+1)/p}ds=C_{a,p,q}.\nonumber
\end{equation}
Then the inequality (\ref{whardy}) follows from (\ref{w-1}) and (\ref{w-2}). The lemma is proved.
\end{proof}

\section{\bf {Proof of Theorem \ref{m-1}}}
To prove Theorem \ref{m-1}, we first  recall the notations and definitions from \cite{Jin-c}.  For a number $n\in \mathbb{N}_0=\mathbb{N}\cup \{0\}$, we will use the notation $P_n(z)$ to denote a polynomial of degree at most $n$. We denote by $\mathcal{R}$ the class of all rational functions $R$ in $\Delta$ with $R(0)=R'(0)-1=0$ and which have no poles in $\Delta$. Let $\Omega$ be a subset of the complex plane. If $R$ is analytic in $\Omega$, we say $R$ has {\em no zeros} (resp. one zero) in $\Omega$ if the equation $R(z)=0$ has no solution (resp. exactly one solution) in $\Omega$. If $R$ is analytic in $\Omega \setminus \Omega_1$, where $\Omega_1\subseteq \Omega$ is a finite (possible empty), we will say $R$ has {\em no critical points} (resp. exactly one critical point) in $\Omega$ if there is no (resp. exactly one) $z\in \Omega$ such that $R$ is analytic at $z$ and $R'(z)=0$.

We will consider the following subclasses $\mathcal{Q}_{I}$, $\mathcal{Q}_{II}$, $\mathcal{Q}_{III}$ of the class of all locally univalent functions in $\Delta$ for the purpose of the paper. 

\begin{definition}Let $R$ be a locally univalent function in $\Delta$, and it is analytic on $\mathbb{T}$ except finitely many singular points.

$\bullet$ We say $R$ belongs to the class $\mathcal{Q}_I$, if $R'$ can be written in the following form
 \begin{equation}\label{new-d-0}R'(z)=\frac{[(z-z_1)(z-z_2)\cdots(z-z_s)]P_{\bf n}(z)}{P_{\bf m}(z)}:=\frac{\Pi(z)P_{\bf n}(z)}{P_{\bf m}(z)}, \,z\in \overline{\Delta}.\end{equation}
Here $s\in \mathbb{N}\cup \{0\}$. When $s\geq 1$, $z_j, j=1,2,\cdots, s$ are all distinct critical points on $\mathbb{T}$ of $R$. The numerator and denominator of $R'$ have no common factors except $1$. $P_{\bf n}(z)$ and $P_{\bf m}(z)$ have no zeros in $\overline{\Delta}$. The case $s=0$ means that $\Pi(z)\equiv1$ and $R$ has no critical points on $\mathbb{T}$.

$\bullet$ We say $R$ belongs to the class $\mathcal{Q}_{II}$, if $R'$ can be written in the following form
\begin{eqnarray}\label{new-d-1}
R'(z)=\frac{[(z-z_1)(z-z_2)\cdots(z-z_s)]P_{\bf n}(z)}{\prod\limits_{j=1}^{l}(z-e^{i\theta_j})^2 P_{{\bf m}}(z)}, \,z\in \overline{\Delta}. \end{eqnarray}
Here $s\in \mathbb{N}\cup \{0\}, l\in \mathbb{N}$. When $s\geq 1$, $z_j, j=1,2,\cdots, s$ are all distinct critical points on $\mathbb{T}$ of $R$. The numerator and denominator of $R'$ in (\ref{new-d-1}) have no common factors except $1$. $P_{\bf n}(z)$ and $P_{\bf m}(z)$ have no zeros in $\overline{\Delta}$. In particular, $R'$ has no zeros on $\mathbb{T}$ when $s=0$. 

$\bullet$ We say $R$ belongs to the class $\mathcal{Q}_{III}$, if $R'$ can be written in the following form
\begin{eqnarray}\label{new-d-2}
R'(z)&=& \frac{[(z-z_1)(z-z_2)\cdots(z-z_s)]P_{\bf n}(z)}{\prod\limits_{j=1}^{l}(z-e^{i\theta_j})^3\prod\limits_{k=1}^{t}(z-e^{i\widetilde{\theta}_k})^2P_{\bf m}(z)}, \,z\in \overline{\Delta}.\end{eqnarray}
Here $s, t\in \mathbb{N}\cup \{0\}, l\in \mathbb{N}$. When $s\geq 1$, $z_j, j=1,2,\cdots, s$ are all distinct critical points on $\mathbb{T}$ of $R$. The numerator and denominator of $R'$ in (\ref{new-d-2}) have no common factors except $1$. $P_{\bf n}(z)$ and $P_{\bf m}(z)$ have no zeros in $\overline{\Delta}$. In particular, $R'$ has no zeros on $\mathbb{T}$ when $s=0$ and $R'$ has no 
poles of order $2$ on $\mathbb{T}$ when $t=0$. 
\end{definition}
\begin{remark}
From the arguments in \cite{Jin-c}, we know that $\mathcal{U}_R$ is contained in ${\mathcal{Q}_{I}} \cup  {\mathcal{Q}_{II}} \cup  {\mathcal{Q}_{III}}.$  
\end{remark}

Now, to prove Theorem \ref{m-1}, it is enough to show that the following result holds. 
\begin{proposition}\label{pro}
Let $\alpha>-1$. Let $R$ be locally univalent in $\Delta$ and belong to  $\mathcal{Q}_{I}\cup \mathcal{Q}_{II} \cup \mathcal{Q}_{III}$. If $R$ has at least one critical point on $\mathbb{T}$, then $S(R)$ is a multiplier from ${\bf A}_{\alpha}^2$ to ${\bf A}_{\alpha+4}^2$ and
 $$\|S(R)\|_{{\bf M}_{\alpha, \alpha+4}}\geq \mathbb{M}_{\kappa}(\alpha).$$
\end{proposition}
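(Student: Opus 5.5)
The plan is to localize at a critical point on $\mathbb{T}$ and compare $S(R)$ there with the Koebe Schwarzian. After a rotation, which by the change of variables in the proof of Lemma \ref{lemma-key} leaves multiplier norms unchanged, we may assume $R'(1)=0$. Write $R'(z)=(z-1)h(z)$, with $h$ analytic near $1$ and $h(1)\neq0$ (the $z_j$ are distinct simple factors). Then
$$N(R)(z)=\frac{1}{z-1}+\frac{h'}{h}(z),\qquad S(R)(z)=-\frac{1}{(z-1)^2}-\frac{1}{2}\frac{1}{(z-1)^2}+\frac{A(z)}{z-1}+B(z),$$
so
$$S(R)(z)=-\frac{3}{2}\frac{1}{(1-z)^2}+O\Big(\frac{1}{|1-z|}\Big)$$
near $z=1$. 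Away from the points of $\mathbb{T}$ where $R'$ has zeros or poles, $S(R)$ is bounded on $\overline{\Delta}$. At poles of order $2$ or $3$ the same computation gives $|S(R)|\lesssim|1-\bar e^{i\theta}z|^{-2}$. Hence $S(R)\in\mathcal{B}_2$, and by Theorem \ref{th-1} it is a multiplier from ${\bf A}_{\alpha}^2$ to ${\bf A}_{\alpha+4}^2$.

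For the lower bound we test with peaking functions. Since $S(\kappa)=-6(1-z^2)^{-2}$ has two singularities at $\pm1$, each behaving like $-\tfrac32(1-z)^{-2}$ near $z=1$ (because $1-z^2\approx2(1-z)$), the local singular parts of $S(R)$ at $1$ and $S(\kappa)$ at $\pm1$ coincide. We take
$$\phi_r(z)=(1-rz)^{-\lambda},\qquad 2\lambda>\alpha+2,$$
and compute $\|S(R)\phi_r\|_{\alpha+4}^2/\|\phi_r\|_\alpha^2$ as $r\to1^-$, split as
$$S(R)\phi_r=-\tfrac32(1-z)^{-2}\phi_r+E\,\phi_r,\qquad E=O(|1-z|^{-1})\ \text{near }1 .$$
Lemma \ref{l-2} gives $\|\phi_r\|_\alpha^2\sim c\,(1-r^2)^{-(2\lambda-\alpha-2)}$. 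We then show $\|E\phi_r\|_{\alpha+4}=o(\|\phi_r\|_\alpha)$. The contribution near $1$ behaves like a weight $(1-|z|^2)^{\alpha+4}|1-z|^{-2}\lesssim(1-|z|^2)^{\alpha+2}$, and $\|\phi_r\|_{\alpha+2}^2$ grows only like $(1-r)^{-(2\lambda-\alpha-4)}$, or stays bounded by (\ref{a-e-3}) when $2\lambda<\alpha+4$. The contributions from other singular points on $\mathbb{T}$ stay bounded because $\phi_r$ is uniformly bounded away from $1$. The main term is handled exactly: on $\mathbb{C}_+$ (via the Cayley transform and Lemma \ref{l-1}), multiplying by $(1-z)^{-2}$ with $\phi_r$ concentrating at $1$ becomes, after rescaling, a Riemann–Liouville-type operator. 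The limiting ratio is a function $F(\lambda)$ computable with Gamma functions, and the Hardy inequality of Lemma \ref{last} identifies the supremum over the test family.

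Finally, we compare with $\mathbb{M}_\kappa(\alpha)$. By Theorem \ref{m-2} (stated in the paper, with $\beta=\alpha+4$, $g_0=(1-z^2)^{-2}$), $\mathbb{M}_\kappa^2(\alpha)=36\,\mathbb{M}^2_{g_0}(\alpha,\alpha+4)$, the value in Proposition \ref{last-p}. We must show that the single-point local constant $\sup_\lambda F(\lambda)\cdot\tfrac94$ already attains this value. Heuristically the two singularities at $\pm1$ of $g_0$ decouple, so the norm of $g_0$ equals the norm of the local model $(2(1-z))^{-2}$. Equivalently, $\mathbb{M}_{(1-z)^{-2}}(\alpha,\alpha+4)$ is obtained as a limit along our test functions; note $\|\cdot\|$ of $(1-z)^{-2}$ is $4\mathbb{M}_{g_0}$ in the appropriate sense.

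The main obstacle is this last identification. We must show that the half-plane (dilation-invariant) model operator $w\mapsto w^{-2}$ has norm equal to $\mathbb{M}_{g_0}$, and that our test family, or a slightly richer one $\phi_r\cdot\psi$ with optimized profile in the Fourier–Laplace variable of Lemma \ref{l-1}, nearly attains it. I would first try to extract the exact norm of the half-plane multiplier using Lemma \ref{l-1}, where multiplication by $(-iw)^{-2}$ corresponds to double integration $f\mapsto f_2$. That norm is given by Lemma \ref{last}, with extremizers approximated by power functions $t^{s}$ near the critical exponent; these correspond exactly to $\phi_r$ with $\lambda\downarrow(\alpha+2)/2$. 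Then I would match the resulting Gamma expression with the formula of Theorem \ref{m-2}.
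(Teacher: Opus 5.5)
There is a genuine gap, and it is exactly the step you call the main obstacle: your proposal never produces the constant $\mathbb{M}_\kappa^2(\alpha)$. Your splitting and error estimate are fine. Indeed $\|E\phi_r\|_{\alpha+4}^2\lesssim\|\phi_r\|_{\alpha+2}^2=O(1)$ for $2\lambda<\alpha+4$, while $\|\phi_r\|_\alpha^2\to\infty$.

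The trouble is the main term you are left with, $\frac94\|(1-z)^{-2}(1-rz)^{-\lambda}\|_{\alpha+4}^2$. Lemma \ref{l-2} does not evaluate it. At the scale $1-r$ where $\phi_r$ concentrates, $(1-z)^{-2}$ and $(1-rz)^{-2}$ differ by a factor of order one, so your $F(\lambda)$ is a different quantity, and you never compute it.

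Your plan to pin down $\sup_\lambda F(\lambda)$ through the exact norm of the half-plane model $w\mapsto w^{-2}$ and Lemma \ref{last} also points the wrong way. Lemma \ref{last} gives an upper bound, but the proposition is a lower bound. For a lower bound, what is needed is that explicit test ratios tend to $\mathbb{M}_\kappa^2(\alpha)$. Knowing the norm of the model operator, or that the Hardy constant is asymptotically attained, is neither needed nor established in your argument.

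The missing idea is to dilate the symbol at the same rate as the test function; this is the paper's argument. By Lemma \ref{lemma-key} with $a=re^{i\arg z_1}$, you get $\mathbb{M}_R(\alpha)\geq\|S(R)(re^{i\arg z_1}z)\|_{\mathbf{M}_{\alpha,\alpha+4}}$ for every $r\in(0,1)$. The principal part of the dilated symbol at $z=1$ is then exactly $-\frac32 z_1^{-2}(1-rz)^{-2}$.

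Test on $(1-rz)^{-\lambda}$. The main term becomes $\frac94\|(1-rz)^{-\lambda-2}\|_{\alpha+4}^2$. Formula (\ref{a-e-2}) evaluates it with the same factor $\Gamma(2\lambda-\alpha-2)(1-r^2)^{-(2\lambda-\alpha-2)}$ that appears in $\|(1-rz)^{-\lambda}\|_\alpha^2$.

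The remaining terms are handled as follows. Near $1$ they are $O(|1-rz|^{-1})$, uniformly in $r$. Near the other singular points they are $O(|1-re^{-i\Theta}z|^{-2})$, and the test function is bounded there. So by (\ref{a-e-3}), or Claim \ref{claim-1}, they contribute $O(1)$. Hence, for $\alpha+2<2\lambda<\alpha+3$,
\[
\mathbb{M}_R^2(\alpha)\geq\frac94\,\frac{\Gamma(\alpha+6)}{\Gamma(\alpha+2)}\,\frac{[\Gamma(\lambda)]^2}{[\Gamma(\lambda+2)]^2}=\frac94\,\frac{(\alpha+2)(\alpha+3)(\alpha+4)(\alpha+5)}{[\lambda(\lambda+1)]^2}.
\]
Letting $\lambda\downarrow\frac{\alpha}{2}+1$ gives $\frac{36(\alpha+3)(\alpha+5)}{(\alpha+2)(\alpha+4)}=\mathbb{M}_\kappa^2(\alpha)$. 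No Cayley transform, Laplace transform or Hardy inequality is involved here; those are only needed for the upper bound in Theorem \ref{m-2}.

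Your undilated route could in principle also be completed. As $\lambda\downarrow\frac{\alpha+2}{2}$, the blow-up of the relevant constants comes from the region $|1-z|\gg1-r$, where $(1-z)^{-2}$ and $(1-rz)^{-2}$ are asymptotically equal. But that would require an additional limiting argument that you have not supplied.
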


\begin{proof}[Proof of Proposition \ref{pro}] Assume that $R$ has at least one critical point on $\mathbb{T}$. We first show that $S(R)$ is a multiplier from ${\bf A}_{\alpha}^2$ to ${\bf A}_{\alpha+4}^2$. 

When $R$ belongs to $\mathcal{Q}_{I}$, as in (\ref{new-d-0}), we have 
$$R'(z)=\frac{[(z-z_1)(z-z_2)\cdots(z-z_s)]P_{\bf n}(z)}{P_{\bf m}(z)}, s\geq 1.$$
A direct computation gives that
$$N(R)(z)=\sum_{m=1}^{s}\frac{1}{z-z_m}+\frac{P'_{\bf n}(z)}{P_{\bf n}(z)}-\frac{P'_{\bf m}(z)}{P_{\bf m}(z)}.$$
It follows that
$$[N(R)(z)]'=-\sum_{m=1}^{s}\frac{1}{(z-z_m)^2}+\Big[\frac{P'_{\bf n}(z)}{P_{\bf n}(z)}\Big]'-\Big[\frac{P'_{\bf m}(z)}{P_{\bf m}(z)}\Big]'.$$
Then, from the definition of Schwarzian derivative   
$$S(R)(z)=[N(R)(z)]'-\frac{1}{2}[N(R)(z)]^2,$$ 
and by using the element identity
$$(\sum_{m=1}^{n}a_m)^2=\sum_{m=1}^n {a_m^2}+2\sum_{i=1}^{n-1}\sum_{j=i+1}^n a_{i}a_{j}, \,\, n\geq 2.$$
we conclude that $S(R)$ has the following form
\begin{equation}\label{s-1}
S(R)(z)=-\frac{3}{2}\sum_{m=1}^{s}\frac{1}{(z-z_m)^2}+ \sum_{m=1}^{s}\frac{A_m(z)}{z-z_m}+{\bf Anal}(z).
\end{equation}
Here,  
\begin{equation}\label{anal}
{\bf Anal}(z)=\Big[\frac{P'_{\bf n}(z)}{P_{\bf n}(z)}\Big]'-\Big[\frac{P'_{\bf m}(z)}{P_{\bf m}(z)}\Big]'-\frac{1}{2}\Big[\frac{P'_{\bf n}(z)}{P_{\bf n}(z)}\Big]^2-\frac{1}{2}\Big[\frac{P'_{\bf m}(z)}{P_{\bf m}(z)}\Big]^2,\end{equation}
For each $r\in [1, s]$, $A_r(z)$ is analytic in $\overline{\Delta}$. 

When $R$ belongs to $\mathcal{Q}_{II}$, as in (\ref{new-d-1}), we have 
$$R'(z)=\frac{[(z-z_1)(z-z_2)\cdots(z-z_s)]P_{\bf n}(z)}{\prod\limits_{j=1}^{l}(z-e^{i\theta_j})^2 P_{{\bf m}}(z)}, l\geq 1, s\geq 1.$$
Then we have
$$N(R)(z)=\sum_{m=1}^{s}\frac{1}{z-z_m}-2\sum_{j=1}^{l} \frac{1}{z-e^{i\theta_j}}+\frac{P'_{\bf n}(z)}{P_{\bf n}(z)}-\frac{P'_{\bf m}(z)}{P_{\bf m}(z)},$$
so that 
$$[N(R)(z)]'=-\sum_{m=1}^{s}\frac{1}{(z-z_m)^2}+2\sum_{j=1}^{l} \frac{1}{(z-e^{i\theta_j})^2}+\Big[\frac{P'_{\bf n}(z)}{P_{\bf n}(z)}\Big]'-\Big[\frac{P'_{\bf m}(z)}{P_{\bf m}(z)}\Big]'.$$
Then, similarly as above, we have
\begin{eqnarray}\label{s-2}
\lefteqn{S(R)(z)=-\frac{3}{2}\sum_{m=1}^{s}\frac{1}{(z-z_m)^2}+ \sum_{m=1}^{s}\frac{B_m(z)}{z-z_m}}
\\
&&\quad\quad\,\,\,\,+\sum_{j=1}^{l} \frac{C_j(z)}{z-e^{i\theta_j}}+{\bf Anal}(z).\nonumber 
\end{eqnarray}
Here, ${\bf Anal}(z)$ is defined as in (\ref{anal}). for each $m\in [1, s]$, $j\in [1, l]$, $B_m(z)$ and $C_j(z)$ are analytic in $\overline{\Delta}$. 

When $R$ belongs to $\mathcal{Q}_{III}$, as in (\ref{new-d-2}), we have 
$$R'(z)=\frac{[(z-z_1)(z-z_2)\cdots(z-z_s)]P_{\bf n}(z)}{\prod\limits_{j=1}^{l}(z-e^{i\theta_j})^3\prod\limits_{k=1}^{t}(z-e^{i\widetilde{\theta}_k})^2P_{\bf m}(z)},\, s\geq 1, l\geq 1, t\geq 0.$$
Then we have 
$$N(R)(z)=\sum_{m=1}^{s}\frac{1}{z-z_m}-3\sum_{j=1}^{l} \frac{1}{z-e^{i\theta_j}}-2\sum_{k=1}^{t}\frac{1}{z-e^{i\widetilde{\theta}_k}}+\frac{P'_{\bf n}(z)}{P_{\bf n}(z)}-\frac{P'_{\bf m}(z)}{P_{\bf m}(z)},$$
and 
\begin{eqnarray}\lefteqn{[N(R)(z)]'=-\sum_{m=1}^{s}\frac{1}{(z-z_m)^2}+3\sum_{j=1}^{l} \frac{1}{(z-e^{i\theta_j})^2}}\nonumber
\\
&&\quad\quad\quad\quad+2\sum_{k=1}^{t} \frac{1}{(z-e^{i\widetilde{\theta}_k})^2}+\Big[\frac{P'_{\bf n}(z)}{P_{\bf n}(z)}\Big]'-\Big[\frac{P'_{\bf m}(z)}{P_{\bf m}(z)}\Big]'.\nonumber\end{eqnarray}
Consequently, similarly again as above, we have, for $t\geq 1$,
\begin{eqnarray}\label{s-3}
\lefteqn{S(R)(z)=-\frac{3}{2}\sum_{m=1}^{s}\frac{1}{(z-z_m)^2}+ \sum_{m=1}^{s}\frac{D_m(z)}{z-z_m}} 
\\
&&\quad\quad\,\,\,\,-\frac{3}{2}\sum_{j=1}^{l}\frac{1}{(z-e^{i\theta_j})^2}+\sum_{j=1}^{l} \frac{E_j(z)}{z-e^{i\theta_j}}\nonumber \\
&&\quad\quad\,\,\,\, +\sum_{k=1}^{t} \frac{F_k(z)}{z-e^{i\widetilde{\theta}_k}}+{\bf Anal}(z).\nonumber
\end{eqnarray}
Here,  ${\bf Anal}(z)$ is the same as in (\ref{anal}) and $D_m, E_j, F_k$ are all analytic in $\Delta$ for each $m\in [1, s], j\in [1,l], k\in [1, t]$. Note that when $t=0$, there are no following terms
$$\sum_{k=1}^{t} \frac{F_k(z)}{z-e^{i\widetilde{\theta}_k}},$$
in the right side of (\ref{s-3}). 
Combining (\ref{s-1}), (\ref{s-2}) and (\ref{s-3}), we conclude that $S(R)$ belongs to $\mathcal{B}_2$ so that it is a multiplier from ${\bf A}_{\alpha}^2$ to ${\bf A}_{\alpha+4}^2$.  

To proceed the proof, we need the following claim.
\begin{claim}\label{claim-1}
Let $\alpha>-1$ and let $\alpha+2<2\lambda<\alpha+3$. Let $A(z)$ be analytic in $\overline{\Delta}$. 

{\bf (1)} We define 
 \begin{equation}\label{c-01}
\mathbf{T}_{1}:=\frac{\alpha+5}{\pi}\iint_{\Delta}|A(z)||1-rz|^{-2\lambda-3}
(1-|z|^2)^{\alpha+4}dxdy,\nonumber
 \end{equation}   
and  
 \begin{equation}\label{c-02}
\mathbf{T}_{2}:=\frac{\alpha+5}{\pi}\iint_{\Delta}|A(z)||1-rz|^{-2\lambda-2}
(1-|z|^2)^{\alpha+4}dxdy,\nonumber \end{equation} 
Then there is a constant $M>0$ such that ${\bf T}_{i}\leq M$ for all $i=1,2$ and all $r\in (0,1)$.

{\bf (2)} Let $\Theta\in (-2\pi,2\pi)$ be such that $e^{-i\Theta}\neq 1$. We define  
 \begin{equation}\label{c-03}
\mathbf{T}_{3}:=\frac{\alpha+5}{\pi}\iint_{\Delta}|A(z)||1-re^{-i\Theta}z|^{-1}|1-rz|^{-2\lambda-2}
(1-|z|^2)^{\alpha+4}dxdy,\nonumber
 \end{equation}   
and  
 \begin{equation}\label{c-04}
\mathbf{T}_{4}:=\frac{\alpha+5}{\pi}\iint_{\Delta}|A(z)||1-re^{-i\Theta}z|^{-2}|1-rz|^{-2\lambda-2}
(1-|z|^2)^{\alpha+4}dxdy,\nonumber \end{equation} 
Then there is a constant $M>0$ such that ${\bf T}_{i}\leq M$ for all $i=3,4$ and all $r\in (0,1)$.
\end{claim}
\begin{proof}\label{claim}First, note that there is a constant $M_1>0$ such that $|A(z)|\leq M_1$ for all $z\in \Delta$ since $A(z)$ is analytic in $\overline{\Delta}$. Then we have
$${\bf T}_1\leq M_1 \|(1-rz)^{-\lambda-\frac{3}{2}}\|_{\alpha+4}^2, \,\,{\bf T}_2\leq M_1 \|(1-rz)^{-\lambda-1}\|_{\alpha+4}^2.$$
Consequently, by using the third case of Lemma \ref{l-2} and its proof, since $2\lambda+2<2\lambda+3<(\alpha+4)+2$, we see that there is a constant $M>0$ such that ${\bf T}_{i}\leq M$ for all $i=1,2$ and all $r\in (0,1)$. This proves the case {\bf (1)}.

We next only prove the case for ${\bf T}_{4}$, The case for ${\bf T}_{3}$ can be similarly proved.
In view of the conditions of the case {\bf (2)}, for some fixed $r_0\in (0,1)$, we can take two constants $\eta_1>0, \eta_2>0$ such that $\overline{\Omega}_1 \cap \overline{\Omega}_2=\emptyset$. 
Here $$\Omega_1=\{z=re^{i\theta}: r_0<r<1, \Theta-\eta_1<\theta<\Theta+\eta_1\}, $$
$$\Omega_2=\{z=re^{i\theta}: r_0<r<1, -\eta_2<\theta<\eta_2\}.$$
We set 
$$\phi_r(z)=\frac{\alpha+5}{\pi}|1-re^{-i\Theta}z|^{-2}|1-rz|^{-2\lambda-2}(1-|z|^2)^{\alpha+4}.$$
Since $|A(z)|$ is bounded in $\Delta$, it is enough to prove that $\iint_{\Delta}\phi_r(z)dxdy<M$ for some $M>0$.
Note that 
$$\sup_{r\in (0,1), z\in \Omega_1}|1-rz|^{-2\lambda-2}=\max_{z\in {\pi_1}}\{|1-z|^{-2\lambda-2}\},$$
here, $\pi_1$ is the sector $\{z=re^{i\theta}: r\in [0,1], \theta\in [\Theta-\eta_1, \Theta+\eta_1]\}$. Since $1-z$ is continuous in the compact set ${\bf \pi}_1$ and the point $1$ does not belong to ${\bf \pi}_1$, then we see that there is a constant $C_1>0$ such that
$$|1-rz|^{-2\lambda-2}\leq C_1,$$
for all $z\in \Omega_1$ and $r\in (0,1)$. Similarly, since 
$$\sup_{r\in (0,1), z\in \Omega_2}|1-re^{i\Theta}z|^{-2}=\sup_{r\in (0,1), z\in \Omega_2}|e^{-i\Theta}-rz|^{-2}=\max_{z\in {\pi_2}}\{|e^{-i\Theta}-z|^{-2}\},$$
here, $\pi_2$ is the sector $\{z=re^{i\theta}: r\in [0,1], \theta\in [-\eta_2, \eta_2]\}$. As $e^{-i\Theta}-z$ is continuous in the compact set $\pi_2$ and the point $e^{-i\Theta}$ is not contained in $\pi_2$, hence we know that there is a constant $C_2>0$ such that
$$|1-re^{i\Theta}z|^{-2}\leq C_2,$$
for all $z\in \Omega_2$ and $r\in (0,1)$. Meanwhile, we have
$$\sup_{r\in (0,1), z\in \Delta-\Omega_1-\Omega_2}|1-rz|^{-2\lambda-2}|1-re^{i\Theta}z|^{-2}=\max_{z\in \pi_3}\{|1-z|^{-2\lambda-2}|e^{-i\Theta}-z|^{-2}\}$$
here, $\pi_3$ is the compact set $\overline{\Delta}-(s_1 \cup s_2)$, where
$$s_1:=\{z=re^{i\theta}: 0<r\leq 1, \Theta-\eta_1<\theta<\Theta+\eta_1\},$$
and $$s_2:=\{z=re^{i\theta}: 0<r\leq 1, -\eta_2<\theta<\eta_2\}.$$
Therefore, from the facts that both the points $1$ and $e^{-i\Theta}$ are not contained in $\pi_3$, and $1-z, e^{-i\Theta}-z$ are continuous in $\pi_3$, we see that there is a constant $C_3>0$ such that
$$|1-re^{i\Theta}z|^{-2}|1-rz|^{-2\lambda-2}\leq C_3,$$
for all $z\in \Delta-\Omega_1-\Omega_2$ and $r\in (0,1)$. 
Consequently, we obtain that 
\begin{eqnarray}\label{phi-1}
\iint_{\Omega_1}\phi_r(z)dxdy& \leq & C_1\frac{\alpha+5}{\pi}\iint_{\Omega_1}|1-re^{-i\Theta}z|^{-2}(1-|z|^2)^{\alpha+4}dxdy  \\
& \leq& C_1\|(1-rz)^{-1}\|_{\alpha+4}^2,\nonumber
 \end{eqnarray} 
 and 
\begin{eqnarray}\label{phi-2}
\iint_{\Omega_2}\phi_r(z)dxdy&\leq& C_2\frac{\alpha+5}{\pi}\iint_{\Omega_2}|1-rz|^{-2\lambda-2}(1-|z|^2)^{\alpha+4}dxdy   \\
& \leq& C_2\|(1-rz)^{-\lambda-1}\|_{\alpha+4}^2,\nonumber
 \end{eqnarray}and  
 \begin{eqnarray}\label{phi-3} 
 \iint_{\Delta-\Omega_1-\Omega_2}\phi_r(z)dxdy & \leq& C_3\frac{\alpha+5}{\pi}\iint_{\Delta-\Omega_1-\Omega_2}(1-|z|^2)^{\alpha+4}dxdy  \\
& \leq& C_3\|1\|_{\alpha+4}^2=C_3.\nonumber
 \end{eqnarray} 
By (\ref{phi-1}), (\ref{phi-2}), (\ref{phi-3}), since $1<(\alpha+4)+2$, $2\lambda+2<(\alpha+3)+2<(\alpha+4)+2$, using again the third case of Lemma \ref{l-2}, we conclude that for some constant $M>0$ it holds that 
\begin{equation*} 
\iint_{\Delta}\phi_r(z)dxdy=\Big(\iint_{\Omega_1}+\iint_{\Omega_2}+\iint_{\Delta-\Omega_1-\Omega_2}\Big)\phi_r(z)dxdy\leq M, \nonumber
 \end{equation*}
 for all $r\in (0,1)$. The claim is proved. 
  \end{proof}
  
We continue with the proof of Proposition \ref{pro}.  We only prove the case when $R$ belongs to the class $\mathcal{Q}_{III}$. 
As will be seen below, all other cases are covered in the following proof. For a complex number $z$, we next will use $\arg z$ to denote an argument of $z$ with $\arg z \in (-\pi, \pi]$. 

When $R$ belongs to the class $\mathcal{Q}_{III}$, let $\alpha+2<2\lambda<\alpha+3$. From Lemma \ref{l-2}, we get that
\begin{eqnarray}\label{jia-1}
\mathbb{M}_{R}^2(\alpha) &\geq& \sup\limits_{r\in (0,1)}\|S(R)(re^{i\arg z_1}z)\|_{{\bf M}_{\alpha, \alpha+4}}^2 \nonumber \\
&\geq &\sup\limits_{r\in (0,1)}\frac{\|S(R)(re^{i\arg z_1}z)(1-rz)^{-\lambda}\|_{\alpha+4}^2}{\|(1-rz)^{-\lambda}\|_{\alpha}^2}
\end{eqnarray}
We establish an estimate for $\|S(R)(re^{i\arg z_1}z)(1-rz)^{-\lambda}\|_{\alpha+4}^2$.

We write ${\bf\Pi}(z):=(1-rz)^{-\lambda}$ and obtain from (\ref{s-3}) that 
\begin{eqnarray}\label{key-in}
\lefteqn{S(R)(re^{i\arg z_1}z){\bf\Pi}(z)=-\frac{3}{2}\frac{z_1^{-2}{\bf\Pi}(z)}{(1-rz)^2}-\frac{{z_1}^{-1}D_1(z){\bf\Pi}(z)}{1-rz}} \\
&& -\frac{3}{2}\sum_{m=2}^{s}\frac{z_m^{-2}{\bf\Pi}(z)}{(1-re^{i(\arg z_1-\arg z_m)}z)^2}+\sum_{m=2}^{s}\frac{-z_m^{-1}D_m(z){\bf\Pi}(z)}{1-re^{i(\arg z_1-\arg z_m)}z}\nonumber
\\
&&-\frac{3}{2}\sum_{j=1}^{l}\frac{e^{-2i\theta_j}{\bf\Pi}(z)}{(1-re^{i(\arg z_1-\theta_j)}z)^2}+\sum_{j=1}^{l}\frac{-e^{-i\theta_j}E_j(z){\bf\Pi}(z)}{1-re^{i(\arg z_1-\theta_j)}z}\nonumber \\
&&+\sum_{k=1}^{t} \frac{-e^{-i\widetilde{\theta}_k}F_k(z){\bf\Pi}(z)}{1-re^{i(\arg z_1-\widetilde{\theta}_k)}z}+{\bf Anal}(re^{i\arg z_1}z){\bf\Pi}(z).\nonumber
\end{eqnarray}
Here, note that $\arg z_1-\arg z_m, \arg z_1-\theta_j, \arg z_1-\widetilde{\theta}_k$ are pairwise distinct for all $m,j,k$. We regard the right side of (\ref{key-in}) as a sum of $(2s+2l+t+1)$ terms listed above. Consequently, in the integral $\|S(R)(re^{i\arg z_1}z)(1-rz)^{-\lambda}\|_{\alpha+4}^2$, we first use the following 
$$\Big|\sum_{m=1}^{n}a_m\Big|^2\geq |a_1|^2-2|a_1|\sum_{m=2}^{n}|a_m|,$$
with respect to (\ref{key-in}) by taking $n=2s+2l+t+1$ and 
$$a_1=-\frac{3}{2}\frac{z_1^{-2}{\bf\Pi}(z)}{(1-rz)^2},$$
and then integrate both sides of the inequality to obtain that
\begin{eqnarray}
\|S(R)(re^{i\arg z_1}z)(1-rz)^{-\lambda}\|_{\alpha+4}^2\geq \|-\frac{3}{2}\frac{z_1^{-2}{\bf\Pi}(z)}{(1-rz)^2}\|_{\alpha+4}^2-2\bf \Sigma. \nonumber
\end{eqnarray}
Here, $\bf \Sigma$ denotes a sum of $2s+2l+t$ integrals, each of which is of one of the types ${\bf T}_{1}, {\bf T}_{2}, {\bf T}_{3}, {\bf T}_{4}$ appearing in Claim \ref{claim-1}. Therefore, from Claim \ref{claim-1}, we see that there is a constant ${\bf M}>0$ such that
\begin{eqnarray}
\|S(R)(re^{i\arg z_1}z)(1-rz)^{-\lambda}\|_{\alpha+4}^2\geq \|-\frac{3}{2}\frac{z_1^{-2}{\bf\Pi}(z)}{(1-rz)^2}\|_{\alpha+4}^2-{\bf M}, \nonumber
\end{eqnarray}
for all $r\in (0,1)$. It follows from (\ref{jia-1}) that
\begin{eqnarray}\label{jia-2}
\mathbb{M}_{R}^2(\alpha) &\geq& \sup\limits_{r\in (0,1)}\frac{\|-\frac{3}{2}\frac{z_1^{-2}{\bf\Pi}(z)}{(1-rz)^2}\|_{\alpha+4}^2-{\bf M}}{\|(1-rz)^{-\lambda}\|_{\alpha}^2} \nonumber \\
&\geq &\sup\limits_{r\in (0,1)}\Big(\frac{9}{4}\frac{\|(1-rz)^{-2\lambda-2}\|_{\alpha+4}^2}{\|(1-rz)^{-\lambda}\|_{\alpha}^2}-\frac{{\bf M}}{\|(1-rz)^{-\lambda}\|_{\alpha}^2}\Big).
\end{eqnarray}
On the other hand, when $\alpha+2<2\lambda<\alpha+3$, by Lemma \ref{l-2}, we have
\begin{equation}\label{jia-3}
 \|(1-rz)^{-\lambda}\|_{\alpha}^2=\frac{\Gamma(\alpha+2)\Gamma(2\lambda-\alpha-2)}{[\Gamma(\lambda)]^2}\cdot\frac{1+{o}(1)}{(1-r^2)^{2\lambda-\alpha-2}},\, {\textup{as}}\,\, r\rightarrow 1^{-}, 
\end{equation} 
and 
\begin{equation}\label{jia-4}
 \|(1-rz)^{-\lambda-2}\|_{\alpha+4}^2=\frac{\Gamma(\alpha+6)\Gamma(2\lambda-\alpha-2)}{[\Gamma(\lambda+2)]^2}\cdot\frac{1+{o}(1)}{(1-r^2)^{2\lambda-\alpha-2}},\, {\textup{as}}\,\, r\rightarrow 1^{-}. 
\end{equation} 
Combining (\ref{jia-2}), (\ref{jia-3}) and (\ref{jia-4}), let $r\to 1^{-}$, we obtain that
\begin{equation}
\mathbb{M}_{R}^2(\alpha)  \geq\frac{9}{4}\frac{\Gamma(\alpha+6)}{\Gamma(\alpha+2)}\frac{[\Gamma(\lambda)]^2}{[\Gamma(\lambda+2)]^2}=\frac{9}{4}\frac{(\alpha+5)(\alpha+4)(\alpha+3)(\alpha+2)}{[\lambda(\lambda+1)]^2}.\nonumber 
\end{equation}
Finally, let $\lambda\to (\frac{\alpha}{2}+1)^{+}$, we have 
\begin{equation*}
\mathbb{M}_{R}^2(\alpha) \geq\frac{9}{4}\frac{(\alpha+5)(\alpha+4)(\alpha+3)(\alpha+2)}{[(\frac{\alpha}{2}+1)(\frac{\alpha}{2}+2)]^2}=\frac{36(\alpha+5)(\alpha+3)}{(\alpha+2)(\alpha+4)}=\mathbb{M}_{\kappa}^2(\alpha).
\end{equation*}  
This completes the proof of Proposition \ref{pro} and hence of Theorem \ref{m-1}.
\end{proof}

\section{\bf {Proof of Theorem \ref{m-2}}}
The main ideas of the proof are from the paper \cite{SS}.
We first prove that \begin{equation}\label{geq}\mathbb{M}_{g_0}^2(\alpha, \beta)\geq \frac{1}{2^{\beta-\alpha}}\frac{\Gamma(\beta+2)}{\Gamma(\alpha+2)}\Big[\frac{\Gamma(1+\alpha/2)}{\Gamma(\beta/2+1)}\Big]^2.\end{equation}
From Lemma \ref{l-2}, for $r\in (0,1)$, $2\lambda> \alpha+2$, we know that 
\begin{equation} 
 \|{(1-rz^2)^{-\lambda}}\|_{\alpha}^2=\frac{\Gamma(\alpha+2)\Gamma(2\lambda-\alpha-2)}{2^{\alpha+1}[\Gamma(\lambda)]^2}\cdot\frac{1+{o}(1)}{(1-r^2)^{2\lambda-\alpha-2}},\, {\textup{as}}\,\, r\rightarrow 1^{-}, \nonumber 
 \end{equation}
 and \begin{equation} 
 \|{(1-rz^2)^{-\lambda-\frac{\beta-\alpha}{2}}}\|_{\beta}^2=\frac{\Gamma(\beta+2)\Gamma(2\lambda-\alpha-2)}{2^{\beta+1}[\Gamma(\lambda+\frac{\beta-\alpha}{2})]^2}\cdot\frac{1+{o}(1)}{(1-r^2)^{2\lambda-\alpha-2}},\, {\textup{as}}\,\, r\rightarrow 1^{-}.\nonumber 
 \end{equation}
It follows from Lemma \ref{lemma-key} that  
\begin{eqnarray}
\mathbb{M}_{g_0}^2(\alpha, \beta)&\geq&\mathbb{M}_{g_0(\sqrt{r}z)}^2(\alpha, \beta)\geq \sup\limits_{r\in (0,1)}\frac{ \|{(1-rz^2)^{-\lambda-\frac{\beta-\alpha}{2}}}\|_{\beta}^2}{ \|{(1-rz^2)^{-\lambda}}\|_{\alpha}^2}\nonumber \\
&\geq& \frac{1}{2^{\beta-\alpha}}\frac{\Gamma(\beta+2)}{\Gamma(\alpha+2)}\frac{[\Gamma(\lambda)]^2}{[\Gamma(\lambda+\frac{\beta-\alpha}{2})]^2}. \nonumber\end{eqnarray} 
Then, let $\lambda\to (\frac{\alpha}{2}+1)^{+}$, we obtain (\ref{geq}).

Next, we prove 
$$\mathbb{M}_{g_0}^2(\alpha, \beta)\leq \frac{1}{2^{\beta-\alpha}}\frac{\Gamma(\beta+2)}{\Gamma(\alpha+2)}\Big[\frac{\Gamma(1+\alpha/2)}{\Gamma(\beta/2+1)}\Big]^2.$$
That is to say, for any $\phi\in \mathbf{A}_{\alpha}^2$, we need show that
\begin{equation}\label{eq-d-0}
\|g_0\phi\|_{\beta}^2\leq \frac{1}{2^{\beta-\alpha}}\frac{\Gamma(\beta+2)}{\Gamma(\alpha+2)}\Big[\frac{\Gamma(1+\alpha/2)}{\Gamma(1+\beta/2)}\Big]^2\|\phi\|_{\alpha}^2. 
\end{equation}
That is 
\begin{eqnarray}\label{eq-d}
\lefteqn{\iint_{\Delta}(1-|z|^2)^{\beta}|1-z|^{\beta-\alpha}|\phi(z)|^2dxdy}\nonumber \\ &&\leq \frac{1}{2^{\beta-\alpha}} \frac{\Gamma(\beta+1)}{\Gamma(\alpha+1)}\Big[\frac{\Gamma(1+\alpha/2)}{\Gamma(1+\beta/2)}\Big]^2\iint_{\Delta}|\phi(z)|^2(1-|z|^2)^{\alpha}dxdy.
\end{eqnarray}
We will now establish an equivalent formulation of (\ref{eq-d-0})(also (\ref{eq-d})). By using the Cayley transform
$$z=z(w)=\frac{w-i}{w+i}, \,\,z\in \Delta,\,\, w=u+iv\in \mathbb{C}_{+},$$
which maps conformally $\mathbb{C}_{+}$ onto $\Delta$, we get that
$$1-z=\frac{2i}{w+i},\,\, 1-|z|^2=\frac{4v}{|w+i|^2},\,\, dxdy=\frac{4}{|w+i|^4}dudv.$$
We set 
$$h(w):=\frac{\phi(z(w))}{(w+i)^{\alpha+2}}.$$ A direct computation shows that the left side of  (\ref{eq-d}) equals
$$4^{\alpha+1}\iint_{\mathbb{C}_{+}}\frac{|h(w)|^2}{|w|^{\beta-\alpha}}v^{\beta}dudv,$$
and the integral in the right side of (\ref{eq-d}) equals
$$4^{\alpha+1}\iint_{\mathbb{C}_{+}}|h(w)|^2v^{\alpha}dudv.$$
It follows that (\ref{eq-d-0}) is equivalent to 
\begin{equation}\label{equi}
\iint_{\mathbb{C}_{+}}\frac{|h(w)|^2}{|w|^{\beta-\alpha}}v^{\beta}dudv \leq \frac{1}{2^{\beta-\alpha}} \frac{\Gamma(\beta+1)}{\Gamma(\alpha+1)}\Big[\frac{\Gamma(1+\alpha/2)}{\Gamma(1+\beta/2)}\Big]^2
\iint_{\mathbb{C}_{+}}|h(w)|^2v^{\alpha}dudv.
\end{equation}
From Lemma \ref{l-1}, we know that $h\in \mathbf{A}_{\alpha}^2(\mathbb{C}_{+})$ so that there is a $\widetilde{h}$ that belongs to $L_{\alpha}^2(\mathbb{R}_{+})$ such that
$$
h(w) = \int_{0}^{\infty} \tilde{h}(t) e^{i w t} \, dt, \quad w \in \mathbb{C}_{+},
$$
and 
$$
\|h\|_{\alpha}^{2} = \frac{\Gamma(\alpha+1)}{2^{\alpha}} \int_{0}^{\infty} |\tilde{h}(t)|^{2} \frac{dt}{t^{\alpha+1}}.
$$
Note that for any $v>0$, by the Cauchy-Schwarz inequality, we have 
\begin{eqnarray}\Big[\int_{0}^{\infty}|\widetilde{h}(t)|e^{-vt}dt\Big]^2&=&\Big[\int_{0}^{\infty}e^{-vt}t^{(\alpha+1)/2\cdot|\widetilde{h}(t)|\frac{1}{t^{(\alpha+1)/2}}}dt\Big]^2\nonumber\\
 &\leq&\Big[\int_{0}^{\infty}e^{-2vt}t^{\alpha+1}dt\Big]\Big[\int_{0}^{\infty}|\widetilde{h}(t)|^2 \frac{dt}{t^{\alpha+1}}\Big]\nonumber \\
&=& \frac{\Gamma(\alpha+2)}{(2v)^{\alpha+2}}\|\widetilde{h}\|_{L_{\alpha}^2}^2<\infty.\nonumber \end{eqnarray}
Hence, we can define the Laplace transform of $\widetilde{h}$ by
$$\mathfrak{L}(\widetilde{h})(s):=\int_{0}^{\infty}\widetilde{h}(t)e^{-st}dt,\,\, \Re(s)>0.$$
Consequently, $h(w)=\mathfrak{L}(\widetilde{h})(-iw).$ Now set $\gamma=(\beta-\alpha)/2$ and define
$$
H(t) := \frac{1}{\Gamma(\gamma)} \int_0^t (t-x)^{\gamma-1} \widetilde{h}(x) \, dx,\,\, t>0,
$$
which is a convolution and can be written as  
$$
H(t) =\frac{1}{\Gamma(\gamma)}  (p_{\gamma} *\widetilde{h})(t),\,\, p_{\gamma}(t) := {t}^{\gamma-1}, t>0.
$$
It is well known that for $\Re(s) > 0$, $
\mathfrak{L}\{p_{\gamma}\}(s)=\Gamma(\gamma)\,s^{-\gamma}.$ Here, $s^{-\gamma} = e^{-\gamma\log s}$ and $\ln s$ is a branch of the logarithmic function with $\log 1=0$. 
It follows from the convolution theorem of Laplace transform that
 $$
\mathfrak{L}(H)(s) = \mathfrak{L}(p_{\gamma})(s) \cdot \mathfrak{L}(\widetilde{h})(s)
            = s^{-\gamma} \, \mathfrak{L}(\widetilde{h})(s),\,\,\Re(s)>0.
$$
Then, let $s = -i w$ for $w \in \mathbb{C}_{+}$, since $\Re(s)= \Im(w) > 0$, we can define $$
{\bf{H}}(w) := \int_0^\infty H(t) e^{i w t} \, dt.          
$$ and obtain that ${\bf{H}}(w)=\mathfrak{L}(H)(-i w)$. 
Consequently, we have
$$
{\bf H}(w) =(-i w)^{-\gamma} \, \mathfrak{L}(\widetilde{h})(-i w)=(-i w)^{-\gamma} h(w), \,\,w \in \mathbb{C}_{+}.
$$
It follows that $$|{\bf H}(w)| =\frac{|h(w)|}{|w|^{\frac{\beta-\alpha}{2}}}, \,\, w \in \mathbb{C}_{+},$$
and $\bf H$ belongs to ${\bf A}_{\beta}^2$. Hence we see from Lemma \ref{l-1} that
$$
\|{\bf H}\|_{\beta}^{2} = \frac{\Gamma(\beta+1)}{2^{\beta}} \int_{0}^{\infty} |H(t)|^{2} \frac{dt}{t^{\beta+1}}.
$$
Therefore, we see that the inequality (\ref{equi}) is equivalent to the following
\begin{eqnarray}
\lefteqn{\frac{\Gamma(\beta+1)}{2^{\beta}} \int_{0}^{\infty} |H(t)|^{2} \frac{dt}{t^{\beta+1}}}\nonumber \\&&\leq \frac{1}{2^{\beta-\alpha}} \frac{\Gamma(\beta+1)}{\Gamma(\alpha+1)}\Big[\frac{\Gamma(1+\alpha/2)}{\Gamma(1+\beta/2)}\Big]^2 \frac{\Gamma(\alpha+1)}{2^{\alpha}} \int_{0}^{\infty} |\tilde{h}(t)|^{2} \frac{dt}{t^{\alpha+1}}.\nonumber
\end{eqnarray}
Furthermore, (\ref{eq-d-0}) is equivalent to 
\begin{eqnarray}\label{equi-1}
\lefteqn{\int_{0}^{\infty} \Big|\frac{1}{\Gamma((\beta-\alpha)/2)} \int_0^t (t-x)^{\frac{\beta-\alpha}{2}-1} \widetilde{h}(x)dx\Big|^{2} \frac{dt}{t^{\beta+1}}}\nonumber \\&&\leq \Big[\frac{\Gamma(1+\alpha/2)}{\Gamma(1+\beta/2)}\Big]^2 \int_{0}^{\infty} |\tilde{h}(t)|^{2} \frac{dt}{t^{\alpha+1}}.
\end{eqnarray}

Finally, we recall the weighted Hardy inequality in Lemma \ref{last}. Then, the inequality (\ref{equi-1}) follows by taking $q=\frac{\beta-\alpha}{2}$, $a=-\alpha-1$, and $p=2$ in (\ref{whardy}). This proves (\ref{eq-d-0}) is true.  The proof of Theorem \ref{m-2} is finished. 
\section{{\bf Final remarks}}
\begin{remark} 
For $\beta>\alpha>-1$, take $$g_1(z)=2^{(\alpha-\beta)/2} (1-z)^{-\frac{\beta-\alpha}{2}}, \,z\in \Delta.$$
We can check that $g_1\in \mathcal{B}_{(\beta-\alpha)/2}$ and $\|g_1\|_{\mathcal{B}_{(\beta-\alpha)/2}}=1.$ We will show that 
\begin{equation}\label{remark-0}
{\mathbb{M}}_{g_1}(\alpha,\beta)=\mathbb{M}_{g_0}(\alpha,\beta).
\end{equation}
First note that $\mathbb{M}_{g_1}(\alpha,\beta)\leq\mathbb{M}_{g_0}(\alpha,\beta)$ because for any $z\in \Delta,$ 
$$|g_{1}(z)|/|g_0(z)|\leq \frac{|(1+z)^{\frac{\beta-\alpha}{2}}|}{2^{(\beta-\alpha)/2}}\leq 1.$$
Next we will show that
\begin{equation}\label{remark-1}\mathbb{M}_{g_1}^2(\alpha,\beta)\geq \mathbb{M}_{g_0}^2(\alpha,\beta)=\frac{1}{2^{\beta-\alpha}}\frac{\Gamma(\beta+2)}{\Gamma(\alpha+2)}\Big[\frac{\Gamma(1+\alpha/2)}{\Gamma(1+\beta/2)}\Big]^2.\end{equation}
From Lemma \ref{l-2}, for $r\in (0,1)$, $2\lambda> \alpha+2$, we have 
\begin{equation} 
 \|{(1-rz)^{-\lambda}}\|_{\alpha}^2=\frac{\Gamma(\alpha+2)\Gamma(2\lambda-\alpha-2)}{[\Gamma(\lambda)]^2}\cdot\frac{1+{o}(1)}{(1-r^2)^{2\lambda-\alpha-2}},\, {\textup{as}}\,\, r\rightarrow 1^{-}, \nonumber 
 \end{equation}
 and \begin{equation} 
 \|{(1-rz)^{-\lambda-\frac{\beta-\alpha}{2}}}\|_{\beta}^2=\frac{\Gamma(\beta+2)\Gamma(2\lambda-\alpha-2)}{[\Gamma(\lambda+\frac{\beta-\alpha}{2})]^2}\cdot\frac{1+{o}(1)}{(1-r^2)^{2\lambda-\alpha-2}},\, {\textup{as}}\,\, r\rightarrow 1^{-}.\nonumber 
 \end{equation}
It follows from Lemma \ref{lemma-key} that  
\begin{eqnarray}
\mathbb{M}_{g_1}^2(\alpha, \beta)&\geq&\mathbb{M}_{g_1(rz)}^2(\alpha, \beta)\geq \sup\limits_{r\in (0,1)}\frac{ \|2^{(\alpha-\beta)/2}{(1-rz)^{-\lambda-\frac{\beta-\alpha}{2}}}\|_{\beta}^2}{ \|{(1-rz)^{-\lambda}}\|_{\alpha}^2}\nonumber \\
&\geq& \frac{1}{2^{\beta-\alpha}}\frac{\Gamma(\beta+2)}{\Gamma(\alpha+2)}\frac{[\Gamma(\lambda)]^2}{[\Gamma(\lambda+\frac{\beta-\alpha}{2})]^2}. \nonumber\end{eqnarray} 
Then, (\ref{remark-1}) follows by letting $\lambda\to (\frac{\alpha}{2}+1)^{+}$. This proves (\ref{remark-0}).
\end{remark}

\begin{remark}Let $g\in \mathcal{A}(\Delta)$. The multiplication operators are related to the following Volterra-type integral operators $I_g$ and $J_g$, which are defined by
$$
I_{g}(\phi)(z) = \int_{0}^{z}\phi(\zeta)g'(\zeta)\,d\zeta,\,\, \phi\in \mathcal{A}(\Delta),$$
and$$
J_{g}(\phi)(z) = \int_{0}^{z}\phi'(\zeta)g(\zeta)\,d\zeta,\,\, \phi\in \mathcal{A}(\Delta).
$$
It is easy to see that 
$$
I_{g}(\phi)(z) + J_{g}(\phi)(z) = M_{g}(\phi)(z) - g(0)\phi(0).
$$
For $\alpha>-1$, we recall that the Dirichlet-type space ${\bf{D}}_{\alpha}^2={\bf{D}}_{\alpha}^2(\Delta)$ is defined by
$${\bf D}_{\alpha}^2(\Delta)=\{\phi \in \mathcal{A}(\Delta) : \|\phi\|_{{\bf D}_{\alpha}^2}^2:=|\phi(0)|^2+(\alpha+1)\iint_{\Delta}|\phi'(z)|^2(1-|z|^2)^{\alpha}\frac{dxdy}{\pi}<\infty\}.$$
Then we see that $\phi \in {\bf D}_{\alpha}^2$ if and only if $\phi' \in {\bf A}_{\alpha}^2$. 
From this, we can restate Theorem \ref{th-1} equivalently as
\begin{proposition}\label{pr-1}Let $\beta>\alpha>-1$. 
The operator $I_g$ is bounded from ${\bf A}_{\alpha}^2$ to ${\bf D}_{\beta}^2$ if and only if $g$ belongs to the Bloch-type space $\mathfrak{B}_{\gamma}$ with $\gamma=(\beta-\alpha)/2.$ 
Here, for $\gamma \in \mathbb{R}$, the Bloch-type space $\mathfrak{B}_{\gamma}=\mathfrak{B}_{\gamma}(\Delta)$ is defined by
$$
\mathfrak{B}_{\gamma}(\Delta)=\left\{ \phi \in \mathcal{A}(\Delta) : 
\|\phi\|_{\mathfrak{B}_{\gamma}}:=\sup_{z\in \Delta}|\phi'(z)|(1-|z|^2)^{\gamma}<\infty \right\}.
$$
\end{proposition}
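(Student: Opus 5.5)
The statement to prove is Proposition \ref{pr-1}: $I_g$ is bounded from ${\bf A}_{\alpha}^2$ to ${\bf D}_{\beta}^2$ if and only if $g\in\mathfrak{B}_{\gamma}$ with $\gamma=(\beta-\alpha)/2$. I will reduce it directly to Theorem \ref{th-1} applied to the multiplier $g'$. The key observation is that $I_g(\phi)(0)=0$ and $(I_g\phi)'=\phi g'$. Hence, by the definition of the ${\bf D}_{\beta}^2$ norm,
$$\|I_g\phi\|_{{\bf D}_{\beta}^2}^2=0+(\beta+1)\iint_{\Delta}|g'(z)\phi(z)|^2(1-|z|^2)^{\beta}\frac{dxdy}{\pi}=\|g'\phi\|_{\beta}^2=\|M_{g'}\phi\|_{\beta}^2,$$
for every $\phi\in\mathcal{A}(\Delta)$, with both sides allowed to be infinite. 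So $I_g:{\bf A}_{\alpha}^2\to{\bf D}_{\beta}^2$ is the composition of $M_{g'}:{\bf A}_{\alpha}^2\to{\bf A}_{\beta}^2$ with the isometry $\psi\mapsto\int_0^z\psi(\zeta)\,d\zeta$ from ${\bf A}_{\beta}^2$ onto $\{F\in{\bf D}_{\beta}^2: F(0)=0\}$.

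The steps are as follows. First, verify the identity above. This needs only that $\int_0^z$ of an analytic function is analytic with vanishing value at $0$, so no convergence issues arise. Second, conclude that $I_g$ is bounded with $\|I_g\|=\|g'\|_{{\bf M}_{\alpha,\beta}}$ precisely when $M_{g'}$ is bounded from ${\bf A}_{\alpha}^2$ to ${\bf A}_{\beta}^2$. Both directions follow from the equality of norms for each fixed $\phi$. Third, apply Theorem \ref{th-1} to $g'\in\mathcal{A}(\Delta)$: $M_{g'}$ is bounded if and only if $g'\in\mathcal{B}_{\gamma}$. Finally, $g'\in\mathcal{B}_{\gamma}$ is by definition the statement $\sup_{z}|g'(z)|(1-|z|^2)^{\gamma}<\infty$, that is, $g\in\mathfrak{B}_{\gamma}$, and moreover $\|g'\|_{\mathcal{B}_{\gamma}}=\|g\|_{\mathfrak{B}_{\gamma}}$.

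There is no real obstacle here. The only point needing care is the normalization in the ${\bf D}_{\beta}^2$ norm: the term $|\phi(0)|^2$ must drop out. It does so because $I_g(\phi)(0)=0$, which is exactly why the proposition is phrased with $I_g$ rather than $M_g$. As a by-product, the argument identifies the operator norm: $\|I_g\|_{{\bf A}_{\alpha}^2\to{\bf D}_{\beta}^2}=\mathbb{M}_{g'}(\alpha,\beta)$. Theorem \ref{m-2} therefore also yields exact norms of $I_g$ when $g'=g_0$.
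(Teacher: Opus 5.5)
Your proposal is correct and is essentially the paper's own argument. The paper notes that $\phi\in{\bf D}_{\beta}^2$ if and only if $\phi'\in{\bf A}_{\beta}^2$ and restates Theorem \ref{th-1} for the multiplier $g'$; this is your identity $\|I_g\phi\|_{{\bf D}_{\beta}^2}=\|g'\phi\|_{\beta}$ (which uses $I_g\phi(0)=0$) followed by Theorem \ref{th-1}, with the added by-product $\|I_g\|=\mathbb{M}_{g'}(\alpha,\beta)$ that the paper uses implicitly in its later norm computation for $g_3$.
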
Note that $g\in \mathfrak{B}_{\gamma}$ if and only if $\phi'\in \mathcal{B}_{\gamma}$. When $I_g$ is bounded from ${\bf A}_{\alpha}^2$ to ${\bf D}_{\beta}^2$,  the norm of the operator $I_g$ about the symbol $g$, denoted by $\|g\|_{I}$, is defined by
$$
\|g\|_{I}:=\sup_{\phi \in {\bf A}^{2}_{\alpha}, \phi \neq 0} \frac{\|I_g(\phi)\|_{{\bf D}_{\beta}^2}}{\|\phi\|_{\alpha}}.
$$ 

Theorem \ref{th-1} also can equivalently be present as 
\begin{proposition}\label{pr-2}Let $\beta>\alpha>-1$. 
The operator $J_g$ is bounded from ${\bf D}_{\alpha}^2$ to ${\bf D}_{\beta}^2$ if and only if $g$ belongs to the growth space $\mathfrak{B}_{\gamma}$ with $\gamma=(\beta-\alpha)/2.$ 
\end{proposition}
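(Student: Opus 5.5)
The plan is to deduce Proposition \ref{pr-2} directly from Theorem \ref{th-1}, using the fact that differentiation carries ${\bf D}_{\alpha}^2$ onto ${\bf A}_{\alpha}^2$ (modulo constants). From $J_g(\phi)(z)=\int_0^z\phi'(\zeta)g(\zeta)\,d\zeta$ we have $J_g(\phi)(0)=0$ and $(J_g\phi)'=g\phi'$. Hence
$$\|J_g\phi\|_{{\bf D}_\beta^2}^2=(\beta+1)\iint_\Delta|g\phi'|^2(1-|z|^2)^\beta\frac{dxdy}{\pi}=\|M_g(\phi')\|_\beta^2 .$$
Moreover, the map $\phi\mapsto\phi'$ is surjective from ${\bf D}_\alpha^2$ onto ${\bf A}_\alpha^2$: for $\psi\in{\bf A}_\alpha^2$ take $\phi(z)=\int_0^z\psi$. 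It also satisfies $\|\phi'\|_\alpha\le\|\phi\|_{{\bf D}_\alpha^2}$, with equality when $\phi(0)=0$.

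Suppose first that $g\in\mathcal B_\gamma$ with $\gamma=(\beta-\alpha)/2$; this is the growth-space condition on $g$ itself, which is evidently what the statement intends, since the symbol $g$ enters $J_g$ undifferentiated. By Theorem \ref{th-1}, $M_g:{\bf A}_\alpha^2\to{\bf A}_\beta^2$ is bounded. Then
$$\|J_g\phi\|_{{\bf D}_\beta^2}=\|g\phi'\|_\beta\le\|g\|_{{\bf M}_{\alpha,\beta}}\|\phi'\|_\alpha\le\|g\|_{{\bf M}_{\alpha,\beta}}\|\phi\|_{{\bf D}_\alpha^2},$$
so $J_g$ is bounded.

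Conversely, suppose $J_g$ is bounded from ${\bf D}_\alpha^2$ to ${\bf D}_\beta^2$. Given $\psi\in{\bf A}_\alpha^2$, set $\phi=\int_0^z\psi$, so that $\phi(0)=0$ and $\|\phi\|_{{\bf D}_\alpha^2}=\|\psi\|_\alpha$. Then
$$\|g\psi\|_\beta=\|J_g\phi\|_{{\bf D}_\beta^2}\le\|J_g\|\,\|\psi\|_\alpha ,$$
so $M_g$ is bounded from ${\bf A}_\alpha^2$ to ${\bf A}_\beta^2$. By Theorem \ref{th-1}, $g\in\mathcal B_\gamma$.

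As a byproduct, the two estimates show that $\|J_g\|=\|g\|_{{\bf M}_{\alpha,\beta}}$. The only point needing care is the notational discrepancy in the statement, which writes $\mathfrak{B}_\gamma$ where the growth space $\mathcal B_\gamma$ is meant. Once that is read as $\mathcal B_\gamma$, no step presents a real obstacle. The proof of Proposition \ref{pr-1} is identical in structure, using $(I_g\phi)'=g'\phi$ and applying Theorem \ref{th-1} to $g'$.
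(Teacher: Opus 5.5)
Your proof is correct and matches the paper's approach. The paper gives no separate argument: it only remarks that $\phi\in{\bf D}_\alpha^2$ iff $\phi'\in{\bf A}_\alpha^2$, so that, via $(J_g\phi)'=g\phi'$, the statement is Theorem \ref{th-1} restated, and you have written this reduction out carefully (including the identity $\|J_g\|=\|g\|_{{\bf M}_{\alpha,\beta}}$ behind the paper's later value of $\|g_2\|_J$). You are also right that $\mathfrak{B}_\gamma$ must be read as the growth space $\mathcal{B}_\gamma$, as the paper's own phrase ``growth space'' and its example $g_2(z)=(1-z)^{-(\beta-\alpha)/2}\notin\mathfrak{B}_\gamma$ confirm.
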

When $J_g$ is bounded from ${\bf D}_{\alpha}^2$ to ${\bf D}_{\beta}^2$,  the norm of the operator $J_g$ about the symbol $g$, denoted by $\|g\|_{J}$, is defined by
$$
\|g\|_{J}:=\sup_{\phi \in {\bf D}^{2}_{\alpha}, \phi \neq 0}\frac{\|J_g(\phi)\|_{{\bf D}_{\beta}^2}}{\|\phi\|_{{\bf D}_{\alpha}^2}}.
$$ 

In view of Proposition \ref{pr-2} and based on (\ref{remark-0}), we can check that
\begin{proposition}
Let $\beta>\alpha>-1$. Let $g_2(z)=(1-z)^{-\frac{\beta-\alpha}{2}}, z\in \Delta$. Then we have 
$$\|g_2\|_{J}^2=\frac{\Gamma(\beta+2)}{\Gamma(\alpha+2)}\Big[\frac{\Gamma(1+\alpha/2)}{\Gamma(1+\beta/2)}\Big]^2.$$
\end{proposition}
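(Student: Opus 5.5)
The plan is to reduce $\|g_2\|_J$ to a multiplier norm between weighted Bergman spaces. Then we use Theorem \ref{m-2} together with the identity (\ref{remark-0}) from the preceding remark.

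First, fix $\phi\in{\bf D}_{\alpha}^2$. Since $J_{g_2}(\phi)(0)=0$ and $J_{g_2}(\phi)'=g_2\phi'$, the definition of the ${\bf D}_{\beta}^2$-norm gives
$$\|J_{g_2}(\phi)\|_{{\bf D}_{\beta}^2}^2=\|g_2\phi'\|_{\beta}^2 .$$
Here the ${\bf A}^2_\beta$-norm carries the same normalizing factor $(\beta+1)/\pi$ as the Dirichlet-type norm. On the other side,
$$\|\phi\|_{{\bf D}_{\alpha}^2}^2=|\phi(0)|^2+\|\phi'\|_{\alpha}^2 .$$
Hence the quotient defining $\|g_2\|_J$ is at most $\|g_2\phi'\|_\beta^2/\|\phi'\|_\alpha^2$, and it equals this value when $\phi(0)=0$. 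As $\phi$ runs over ${\bf D}_\alpha^2$ with $\phi(0)=0$, the derivative $\psi=\phi'$ runs over all of ${\bf A}_\alpha^2$, because every $\psi\in{\bf A}_\alpha^2$ has the primitive $\int_0^z\psi$. Constants $\phi$ give the value $0$ and may be discarded. Therefore
$$\|g_2\|_J=\|g_2\|_{{\bf M}_{\alpha,\beta}}=\mathbb{M}_{g_2}(\alpha,\beta).$$

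Next, note that $g_2=2^{(\beta-\alpha)/2}g_1$ with $g_1$ as in the preceding remark. Multiplier norms are homogeneous, so
$$\mathbb{M}_{g_2}^2(\alpha,\beta)=2^{\beta-\alpha}\mathbb{M}_{g_1}^2(\alpha,\beta).$$
By (\ref{remark-0}) we have $\mathbb{M}_{g_1}=\mathbb{M}_{g_0}$, and Theorem \ref{m-2} evaluates $\mathbb{M}_{g_0}^2(\alpha,\beta)$. The factor $2^{\beta-\alpha}$ cancels the $2^{-(\beta-\alpha)}$ in Theorem \ref{m-2}, which yields exactly
$$\frac{\Gamma(\beta+2)}{\Gamma(\alpha+2)}\Big[\frac{\Gamma(1+\alpha/2)}{\Gamma(1+\beta/2)}\Big]^2 .$$

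The only point needing care is the first step. One must check that the $|\phi(0)|^2$ term can only decrease the ratio, so that the supremum is attained along $\phi(0)=0$. One must also check that the normalizations of the ${\bf D}$- and ${\bf A}$-norms match. Both are immediate from the definitions, so I expect no real obstacle. The substantive content lies in Theorem \ref{m-2} and (\ref{remark-0}), which are already available.
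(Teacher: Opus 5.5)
Your proposal is correct and follows the route the paper indicates when it says the result follows "in view of Proposition \ref{pr-2} and based on (\ref{remark-0})". First, $J_{g_2}(\phi)(0)=0$ and $J_{g_2}(\phi)'=g_2\phi'$ reduce $\|g_2\|_J$ to $\mathbb{M}_{g_2}(\alpha,\beta)$, since the $|\phi(0)|^2$ term can only shrink the ratio. Then homogeneity with $g_2=2^{(\beta-\alpha)/2}g_1$, together with (\ref{remark-0}) and Theorem \ref{m-2}, gives the constant. You simply write out the details the paper leaves as "we can check".
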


For $\beta>\alpha>-1$, let 
\begin{equation}\label{g-l}g_3(z):=\sum_{n=0}^{\infty}\frac{\Gamma(n+\frac{\beta-\alpha}{2})}{(n+1)!\Gamma(\frac{\beta-\alpha}{2})}z^{n+1},\, z\in\Delta.\end{equation}
From (\ref{l-eq-1}), we know that $g$ is analytic in $\Delta$ and 
\begin{equation} 
[g_3(z)]'=(1-z)^{-\frac{\beta-\alpha}{2}},\, z\in \Delta,\nonumber \end{equation}so that $g$ belongs to $\mathfrak{B}_{(\beta-\alpha)/2}.$
In particular, when $\beta-\alpha\neq 2$, we have $$g_3(z)=\frac{2}{\beta-\alpha-2}(1-z)^{1-\frac{\beta-\alpha}{2}},\,z\in\Delta.$$

Then, similarly, from Proposition \ref{pr-1} and (\ref{remark-0}), we also have
\begin{proposition}
Let $\beta>\alpha>-1$. Let $g_3$ be as in (\ref{g-l}). Then we have 
$$\|g_3\|_{I}^2=\frac{\Gamma(\beta+2)}{\Gamma(\alpha+2)}\Big[\frac{\Gamma(1+\alpha/2)}{\Gamma(1+\beta/2)}\Big]^2.$$
\end{proposition}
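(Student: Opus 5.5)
The plan is to reduce the statement directly to the norm identity (\ref{remark-0}) together with Theorem \ref{m-2}. The operator $I_{g_3}$ becomes an ordinary multiplication operator once one passes to derivatives, so no new estimate is needed.

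First, for $\phi\in{\bf A}_{\alpha}^2$, note that $I_{g_3}(\phi)(0)=0$ and $[I_{g_3}(\phi)]'(z)=\phi(z)g_3'(z)=(1-z)^{-\frac{\beta-\alpha}{2}}\phi(z)$, using $[g_3(z)]'=(1-z)^{-\frac{\beta-\alpha}{2}}$, which follows from (\ref{l-eq-1}). By the definition of the ${\bf D}_{\beta}^2$ norm,
$$\|I_{g_3}(\phi)\|_{{\bf D}_{\beta}^2}^2=(\beta+1)\iint_{\Delta}|(1-z)^{-\frac{\beta-\alpha}{2}}\phi(z)|^2(1-|z|^2)^{\beta}\frac{dxdy}{\pi}=\|g_2\phi\|_{\beta}^2,$$
where $g_2(z)=(1-z)^{-\frac{\beta-\alpha}{2}}$. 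Taking the supremum over $\phi\neq 0$ gives $\|g_3\|_I=\mathbb{M}_{g_2}(\alpha,\beta)$.

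Second, since $g_2=2^{\frac{\beta-\alpha}{2}}g_1$ and the multiplier norm is homogeneous, we get $\mathbb{M}_{g_2}^2(\alpha,\beta)=2^{\beta-\alpha}\mathbb{M}_{g_1}^2(\alpha,\beta)$. By (\ref{remark-0}), $\mathbb{M}_{g_1}(\alpha,\beta)=\mathbb{M}_{g_0}(\alpha,\beta)$. Theorem \ref{m-2} then gives
$$\|g_3\|_I^2=2^{\beta-\alpha}\cdot\frac{1}{2^{\beta-\alpha}}\frac{\Gamma(\beta+2)}{\Gamma(\alpha+2)}\Big[\frac{\Gamma(1+\alpha/2)}{\Gamma(1+\beta/2)}\Big]^2,$$
which is the claimed value.

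There is no serious obstacle. The only point that needs care is the first step: the constant term $|I_{g_3}(\phi)(0)|^2$ in the ${\bf D}_\beta^2$ norm must vanish, which it does since the integral starts at $0$. One must also check that the weight normalization $(\beta+1)/\pi$ in ${\bf D}_\beta^2$ matches that of ${\bf A}_\beta^2$, which it does by definition. With both checks, the identification $\|g_3\|_I=\mathbb{M}_{g_2}(\alpha,\beta)$ is exact rather than merely an equivalence of norms.
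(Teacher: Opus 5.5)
Your proposal is correct and follows the paper's route. The paper obtains this proposition ``similarly, from Proposition \ref{pr-1} and (\ref{remark-0}),'' which amounts to identifying $\|I_{g_3}(\phi)\|_{{\bf D}_{\beta}^2}$ with $\|g_3'\phi\|_{\beta}=\|g_2\phi\|_{\beta}$ and then combining $g_2=2^{(\beta-\alpha)/2}g_1$, (\ref{remark-0}) and Theorem \ref{m-2}, exactly as you do; your explicit checks (that $I_{g_3}(\phi)(0)=0$ and that the normalizations of ${\bf D}_{\beta}^2$ and ${\bf A}_{\beta}^2$ match) are the details the paper leaves implicit.
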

\end{remark}

\begin{remark}
By checking carefully the proof of Theorem \ref{m-1}, we see that if a univalent rational function $f\in \mathcal{S}$ has a pole of order $2$, then $\mathbb{M}_f(\alpha)\geq \mathbb{M}_{\kappa}(\alpha)$ for each $\alpha>-1$. 
In \cite{Jin-c}, we have found some rational functions that induce the same multiplier norm as the Koebe function.
Next we will give more rational functions satisfying this characteristic. As some examples, we consider the functions $$f_b(z):=\frac{z+bz^2}{(1-z)^2},z\in \Delta,\,\, b\in [-1,0].$$ We next check that $f_b\in \mathcal{S}$ and $\mathbb{M}_f(\alpha)=\mathbb{M}_{\kappa}(\alpha)$ for each $\alpha>-1$.
Note that a simple computation gives $f_b(0)=f_b'(0)-1=0$. To prove $f_b\in \mathcal{S}$, we only need to show 
\begin{lemma}\label{ll-5}$f_b$ is univalent in $\Delta$ for each $b\in [-1,0]$.
\end{lemma}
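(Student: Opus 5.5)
We need to show that $f_b(z)=(z+bz^2)/(1-z)^2$ is univalent in $\Delta$ for $b\in[-1,0]$. The plan is to write $f_b$ as a combination of the Koebe function and a simple Möbius-type term, then reduce univalence to the injectivity of a Möbius map on a half-plane.

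First, decompose $f_b$. Since $z+bz^2=(1+b)z-bz(1-z)$, we get
$$f_b(z)=(1+b)\frac{z}{(1-z)^2}-b\frac{z}{1-z}.$$
Introduce $w=\frac{1+z}{1-z}$, which maps $\Delta$ conformally onto the right half-plane $\{\Re w>0\}$. Then
$$\frac{z}{(1-z)^2}=\frac{w^2-1}{4},\qquad \frac{z}{1-z}=\frac{w-1}{2}.$$
Therefore
$$f_b(z)=\frac{1+b}{4}(w^2-1)-\frac{b}{2}(w-1).$$
So it suffices to show that the polynomial $Q(w)=\frac{1+b}{4}w^2-\frac{b}{2}w$ is injective on $\{\Re w>0\}$.

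Next, handle the endpoint cases. If $b=-1$, then $Q(w)=w/2$, which is injective; indeed $f_{-1}(z)=z/(1-z)$. If $b=0$, then $f_0=\kappa$ is the Koebe function, which is univalent.

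For $b\in(-1,0)$, suppose $Q(w_1)=Q(w_2)$ with $w_1\neq w_2$. Dividing by $w_1-w_2$ gives
$$\frac{1+b}{4}(w_1+w_2)=\frac{b}{2}, \quad\text{i.e.}\quad w_1+w_2=\frac{2b}{1+b}\le 0.$$
This is impossible, because $\Re(w_1+w_2)>0$ whenever $w_1,w_2$ lie in the right half-plane. Hence $Q$ is injective there, and so $f_b$ is univalent in $\Delta$.

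The only real work is finding the right substitution $w=(1+z)/(1-z)$; once $f_b$ is written as a quadratic in $w$, the difference-quotient argument is immediate. Equivalently, one could compute $f_b'(z)=\frac{1+(1+2b)z}{(1-z)^3}$ and check a starlikeness-type criterion, but the half-plane argument is cleaner and I would use it. Together with $f_b(0)=f_b'(0)-1=0$, this gives $f_b\in\mathcal S$, as stated in Lemma \ref{ll-5}.
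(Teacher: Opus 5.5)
Your proof is correct, and it takes a different route from the paper's.

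The paper stays in $\Delta$. From $f_b(z_1)=f_b(z_2)$ with $z_1\neq z_2$ it derives $1+b(z_1+z_2)-(1+2b)z_1z_2=0$. It then splits into the cases $b=0$, $b=-\tfrac12$, $b\in[-1,-\tfrac12)$ and $b\in(-\tfrac12,0)$, rewrites the relation as a product of two shifted factors with $c=\frac{b}{1+2b}$, and excludes equality by bounds over the disk.

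Your Cayley substitution makes $f_b$ a quadratic $Q(w)$ plus a constant. Your difference quotient $\frac{1+b}{4}(w_1+w_2)-\frac b2$ is precisely the paper's expression divided by $2(1-z_1)(1-z_2)$. In the $w$-variable, however, the obstruction is the single linear condition $w_1+w_2=\frac{2b}{1+b}$. The right half-plane excludes it uniformly for $b\in(-1,0]$, with no case split.

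This is more than cosmetic, because the paper's rewriting of the quadratic relation has an algebra slip. The correct form is $(z_1-c)(z_2-c)=\frac{(1+b)^2}{(1+2b)^2}$, not $(z_1+c)(z_2+c)=\frac{(1-b)^2}{(1+2b)^2}$. At $b=-1$, for instance, the true relation is $(1-z_1)(1-z_2)=0$, whereas the paper's form reads $(z_1+1)(z_2+1)=4$. Once corrected, the subcase $b\in(-1,-\tfrac12)$ needs the lower bound $|z_j-c|>c-1$ (since $c>1$ there) rather than an upper bound. Your argument never runs into this.

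Your method also shows the range is sharp. For $b\neq-1$, $Q$ fails to be injective on the right half-plane exactly when $\Re\frac{b}{1+b}>0$. That happens for every real $b\notin[-1,0]$, and for complex $b$ exactly outside the disk $|b+\tfrac12|\le\tfrac12$.
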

\begin{proof}We prove it by contradiction. We assume that there exist $z_1,z_2\in\Delta$, $z_1\neq z_2$, such that $f_b(z_1)=f_b(z_2)$.
By a direct computation, we get that 
\begin{equation}\label{r-1}1+b(z_1+z_2)-(1+2b)z_1z_2=0.\end{equation}
If $b=-\frac{1}{2}$, then $1-\frac{1}{2}(z_1+z_2)=0$, this is impossible since $|z_1+z_2|<2.$ If $b=0$, then $1-z_1z_2=0$, this is impossible since $|z_1z_2|<1.$
When $b\in [-1,-\frac{1}{2})\cup (-\frac{1}{2},0)$, (\ref{r-1}) is equivalent to 
\begin{equation}(1+2b)[z_1z_2+\frac{b}{1+2b}(z_1+z_2)+1]=0.\nonumber\end{equation}
That is 
\begin{equation}\label{r-2}(z_1+\frac{b}{1+2b})(z_2+\frac{b}{1+2b})=\frac{(1-b)^2}{(1+2b)^2}.\end{equation}
If $b\in [-1,-\frac{1}{2})$ so that $\frac{b}{1+2b}>0$, then we have  
$$\Re{\Big[(z_1+\frac{b}{1+2b})(z_2+\frac{b}{1+2b})\Big]}<[1+\frac{b}{1+2b}]^2=\frac{(1+3b)^2}{(1+2b)^2},$$
for all $z_1,z_2 \in\Delta$. On the other hand, we have
$$(1-b)^2-(1+3b)^2=-8b(b+1)\geq 0,$$
so that (\ref{r-2}) is impossible. 

If $b\in (-\frac{1}{2},0)$ so that $\frac{b}{1+2b}<0$, then we have  
$$\Re{\Big[(z_1+\frac{b}{1+2b})(z_2+\frac{b}{1+2b})\Big]}<[-1+\frac{b}{1+2b}]^2=\frac{(1+b)^2}{(1+2b)^2},$$
for all $z_1,z_2 \in\Delta$. On the other hand, we have
$$(1-b)^2-(1+b)^2=-4b>0,$$
so that (\ref{r-2}) is also impossible. Combining all above arguments, we have proved Lemma \ref{ll-5}.  
 \end{proof}
We will check that 
\begin{lemma}\label{lemma-l}Let $b\in [-1, 0]$. For any $z\in \Delta$, we have
\begin{equation}\label{re-add-1}|S_{f_b}(z)|\leq |S_{\kappa}(z)|.\end{equation}
\end{lemma}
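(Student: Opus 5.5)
The plan is to compute $S(f_b)$ in closed form and compare it pointwise with $S(\kappa)(z)=-6/(1-z^2)^2$, which is recorded in the remark after Theorem \ref{m-2}. Throughout, set $c:=1+2b$, so that $c\in[-1,1]$ when $b\in[-1,0]$.

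\emph{Step 1 (the derivative).} Differentiating the quotient $f_b(z)=(z+bz^2)/(1-z)^{2}$ and cancelling one factor of $1-z$ gives
$$f_b'(z)=\frac{(1+2bz)(1-z)+2(z+bz^2)}{(1-z)^3}=\frac{1+cz}{(1-z)^3}.$$

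\emph{Step 2 (pre-Schwarzian and Schwarzian).} From Step 1,
$$N(f_b)(z)=\frac{c}{1+cz}+\frac{3}{1-z},\qquad [N(f_b)(z)]'=-\frac{c^2}{(1+cz)^2}+\frac{3}{(1-z)^2}.$$
Substituting into $S=N'-\tfrac12N^2$ and collecting terms, I expect the expression to become a perfect square:
$$S(f_b)(z)=-\frac32\Big(\frac{c}{1+cz}+\frac{1}{1-z}\Big)^2=-\frac32\,\frac{(1+c)^2}{(1+cz)^2(1-z)^2}.$$
As a check, for $c=1$ (that is, $b=0$, $f_0=\kappa$) this reduces to $-6/(1-z^2)^2$. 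For $c=-1$ (that is, $b=-1$) it gives $0$, consistent with $f_{-1}(z)=z/(1-z)$ being Möbius.

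\emph{Step 3 (reduction of (\ref{re-add-1})).} Dividing both sides by the common factor $|1-z|^{-2}$, inequality (\ref{re-add-1}) is equivalent to
$$(1+c)\,|1+z|\le 2\,|1+cz|,\qquad z\in\Delta.$$
This is the only step needing an idea. I would use the decomposition
$$2(1+cz)=(1+c)(1+z)+(1-c)(1-z),$$
and expand the squared modulus:
$$4|1+cz|^2=(1+c)^2|1+z|^2+(1-c)^2|1-z|^2+2(1+c)(1-c)\,\Re\big[(1+z)\overline{(1-z)}\big].$$
Since $\Re[(1+z)(1-\bar z)]=1-|z|^2>0$ on $\Delta$ and $1\pm c\ge0$, the last two terms are nonnegative. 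Hence $4|1+cz|^2\ge(1+c)^2|1+z|^2$, which proves the lemma.

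The algebra in Step 2 is routine but must be done carefully to see the perfect square. The only real obstacle is finding the decomposition in Step 3, which makes the positivity transparent.
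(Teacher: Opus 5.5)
Your proof is correct. Its first half is the same as the paper's: the paper also states the closed form of $S_{f_b}$ (without deriving it, whereas you derive it as a perfect square) and reduces to $(1+b)|1+z|\le|1+(1+2b)z|$, which is your inequality $(1+c)|1+z|\le 2|1+cz|$.

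The routes diverge at this key inequality. The paper writes $z=re^{i\theta}$ and expands both squared moduli to get $r^2(3b^2+2b)-b^2-2b\ge 2b^2r\cos\theta$. It then replaces $\cos\theta$ by $1$, factors the resulting quadratic in $r$ as $b(r-1)[(3b+2)r+(b+2)]$, and checks the sign of the bracket separately for $b\in[-1,-\frac23]$ and $b\in[-\frac23,0)$. Your identity $2(1+cz)=(1+c)(1+z)+(1-c)(1-z)$, combined with $\Re[(1+z)\overline{(1-z)}]=1-|z|^2$, avoids coordinates and the case split. It also gives the exact defect
\[
4|1+cz|^2-(1+c)^2|1+z|^2=(1-c)^2|1-z|^2+2(1-c^2)(1-|z|^2).
\]
From this you get more than the lemma. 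The inequality is strict on $\Delta$ for every $b\in[-1,0)$. For $b\in(-1,0)$ the ratio $|S_{f_b}|/|S_{\kappa}|$ tends to $1$ only as $z\to 1$, which is exactly the boundary singularity behind the lower bound $\mathbb{M}_{f_b}(\alpha)\ge\mathbb{M}_{\kappa}(\alpha)$ in the paper's subsequent remark. The paper's computation is more mechanical and needs no idea beyond expanding, but it hides this structure.

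Your sanity check at $c=-1$ is also worth keeping. It shows $S_{f_{-1}}\equiv 0$, so the lemma is trivial there. It also shows that the equality $\mathbb{M}_{f_b}(\alpha)=\mathbb{M}_{\kappa}(\alpha)$ asserted right after the lemma can hold only for $b\in(-1,0]$, since for $b=-1$ the pole at $z=1$ is simple.
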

\begin{proof}
Note that
$$
S_{f_b}(z)=-\frac{6(1+b)^2}{\bigl[1+(1+2b)z\bigr]^2(1-z)^2}, \,\,\,
S_{\kappa}(z)=-\frac{6}{(1+z)^2(1-z)^2}.
$$ 
A direct computation yields that (\ref{re-add-1}) is equivalent to 
\begin{equation}\label{re-add-2}
|1+(1+2b)z|\geq (1+b)|1+z|, \,\text{for all}\,z\in \Delta. 
\end{equation}
Let $z=re^{i\theta}\in \Delta$. By a direct simple computation, we get that (\ref{re-add-2}) is equivalent to 
\begin{equation} 
r^2(3b^2+2b)-b^2-2b\geq 2b^2r\cos\theta,. \nonumber
\end{equation}for all $r\in (0,1), \, \theta\in (-\pi, \pi].$ Then, it is enough to prove that for each $b\in [-1,0]$,
\begin{equation}\label{h-1} 
r^2(3b^2+2b)-b^2-2b\geq 2b^2r,
\end{equation}for all $r\in (0,1).$ 

When $b=0$, (\ref{h-1}) is clearly true for all $r\in (0,1)$. When $b\in [-1,0)$, (\ref{h-1}) is equivalent to 
\begin{equation}\label{r-3} (r-1)[r(3b+2)+(b+2)]\leq 0, \end{equation}
for all $r\in (0,1)$. On the other hand, we note that, for $b\in [-1, -\frac{2}{3}]$, $$r(3b+2)+b+2\geq 4b+4 \geq 0,$$
for all $r\in (0,1)$ and for $b\in [-\frac{2}{3},0)$, $$r(3b+2)+b+2\geq b+2 \geq 0,$$
for all $r\in (0,1)$. These facts imply that for each $b\in [-1,0]$, (\ref{r-3}) is true for all $r\in (0,1)$ so that the lemma is proved. 
\end{proof}
Since Lemma \ref{lemma-l} implies that for any $b\in [-1,0]$, $\mathbb{M}_{f_b}(\alpha)\leq \mathbb{M}_{\kappa}(\alpha)$ for each $\alpha>-1$, combining above, we conclude that for any $b\in [-1,0]$, $\mathbb{M}_{f_b}(\alpha)=\mathbb{M}_{\kappa}(\alpha)$ for each $\alpha>-1$. 
\end{remark}
\begin{remark}
Although, Theorem \ref{m-1} tells us that there are many univalent functions $f$ satisfying that $\mathbb{M}_f(\alpha)\geq \mathbb{M}_{\kappa}(\alpha)$ for each $\alpha>-1$, also there are some functions satisfying that $\mathbb{M}_f(\alpha)=\mathbb{M}_{\kappa}(\alpha)$ for each $\alpha>-1$, we do not find any function $f$ for which $S_f$ induces a multiplier norm strictly bigger than that of the Koebe function. On the other hand, we have checked in \cite{Jin-1} that for each $\alpha>0$, $\mathbb{M}_f(\alpha)\leq \mathbb{M}_{\kappa}(\alpha)$ holds for a large class of univalent functions $f$ with a quasiconformal extension to the whole complex plane.  Hence we believe that the Koebe function 
induces the biggest multiplier norm for each $\alpha>0$. That is to say, we believe that the following conjecture is true, which implies the Brennan conjecture. \end{remark}
\begin{conjecture}\label{k}
For each $\alpha>0$, we have $\mathbb{M}_f(\alpha)\leq \mathbb{M}_{\kappa}(\alpha)$ for all $f\in \mathcal{S}$. 
\end{conjecture}
Let $\mathcal{S}_{*}$ be the class of all functions $f_{*}$ in $\mathcal{S}$ that have the form 
$$f_{*}(z)=\rho\circ \kappa \circ  \tau(z), z\in \Delta,$$
here $\rho$ is a M\"obius transformation, $\tau$ is a M\"obius transformation mapping the unit disk onto itself, and $\kappa$ is the Koebe function as before.  It is known that for each function $f_{*}\in \mathcal{S}_{*}$ there exists a curve in $\Delta$ such that $|S_{f_{*}}(z)|(1-|z|^2)^2=6$ for $z$ on that curve, while for any $f\in \mathcal{S}-\mathcal{S}_{*}$ we have $|S_f(z)|(1-|z|^2)^2<6$ for all $z\in \Delta$, see \cite[Page 60]{L}. On the other hand, we can check that for $\alpha>-1$, $\mathbb{M}_{f_{*}}(\alpha)=\mathbb{M}_{\kappa}(\alpha)$ for every $f_{*}\in \mathcal{S}_{*}$.
Indeed, for a function $f_* = \rho \circ \kappa \circ \tau$ belonging to $\mathcal{S}_{*}$, since the Schwarzian derivative is invariant under the Möbius transformation, we obtain $S_{f_{*}}=S_{\kappa\circ \tau}$.
To see $\mathbb{M}_{\kappa\circ\tau}(\alpha)=\mathbb{M}_{\kappa}(\alpha)$, we let $\tau(z)=e^{i\theta}(z-a)/{(1-\bar{a}z)}, a\in \Delta, z\in \Delta$, and define the operator $T$ on $\mathbf{A}_\alpha^2$ by
$(T\phi)(z) = \phi(\tau(z))[\tau'(z)]^{(\alpha+2)/2}$. It follows from a direct change of variables that $T$ is an isometric isomorphism of $\mathbf{A}_\alpha^2$ onto itself.
For any $\phi\in\mathbf{A}_\alpha^2$, set $\psi = T^{-1}\phi$, then $\|\phi\|_\alpha = \|\psi\|_\alpha$ and again a change of variables yields
$
\|S_{\kappa\circ\tau}\phi\|_{\alpha+4} = \|S_{\kappa} \psi\|_{\alpha+4}.
$
Hence $\mathbb{M}_{\kappa\circ\tau}(\alpha)=\mathbb{M}_{\kappa}(\alpha)$ and so that $\mathbb{M}_{f_*}(\alpha)=\mathbb{M}_{\kappa}(\alpha)$ for each $\alpha>-1$. These facts suggest that the above conjecture may be true. 

\begin{remark}Let $f_n\in \mathcal{S}, n\in\mathbb{N}$ be such that the sequence $f_n$ converges uniformly to $f\in \mathcal{S}$ on compact subsets of the unit disk. Then we see that $S_{f_n}$ also converges uniformly to $S_f$ on compact subsets of the unit disk.
By using the similar arguments as in the proof of Sub-Lemma \ref{sub-2}, we can prove that 
\begin{claim}\label{jia-p}
For $\alpha>-1$ and a fixed $\phi \in \mathbf{A}_{\alpha}^2$,  $$
\lim_{n \to \infty} \| S_{f_n}(z)\phi(z) - S_f(z)\phi(z) \|_{\alpha+4} = 0.$$
\end{claim}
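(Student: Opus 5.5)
The plan is to repeat the splitting argument used for Sub-Lemma \ref{sub-2}, with the parameter $a\to e^{i\theta}$ replaced by the index $n\to\infty$. The input that makes this work is a uniform growth bound for Schwarzian derivatives. For every $h\in\mathcal{S}$ we have $|S_h(z)|(1-|z|^2)^2\le 6$, so
$$\|S_{f_n}\|_{\mathcal{B}_2}\le 6 \quad\text{for all } n, \qquad \|S_f\|_{\mathcal{B}_2}\le 6.$$
Hence the differences $S_{f_n}-S_f$ are uniformly bounded in $\mathcal{B}_2$ by $12$.

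Fix $\phi\in\mathbf{A}_\alpha^2$ and $\varepsilon>0$, and choose $r_0\in(0,1)$. Using the pointwise bound above,
$$|S_{f_n}(z)-S_f(z)|^2|\phi(z)|^2(1-|z|^2)^{\alpha+4}\le 144\,|\phi(z)|^2(1-|z|^2)^{\alpha}.$$
This gives the outer piece
$$\frac{\alpha+5}{\pi}\iint_{|z|>r_0}|S_{f_n}-S_f|^2|\phi|^2(1-|z|^2)^{\alpha+4}\,dxdy\le \frac{144(\alpha+5)}{\pi}\iint_{|z|>r_0}|\phi|^2(1-|z|^2)^{\alpha}\,dxdy.$$
The right-hand side does not depend on $n$. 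It is the tail of a convergent integral, because $\phi\in\mathbf{A}_\alpha^2$. So we can pick $r_0$ close to $1$ to make it less than $\varepsilon$ for all $n$. This uniform-in-$n$ domination by an integrable majorant is the only real point of the argument. It is also the step where the univalence of $f_n$ is essential: without the universal constant $6$, the tail estimate would not be uniform in $n$.

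Next comes the inner piece on $\{|z|\le r_0\}$. By hypothesis $f_n\to f$ locally uniformly, so by the remark preceding the claim $S_{f_n}\to S_f$ uniformly on the compact disk $\{|z|\le r_0\}$. Take $N$ so that $\sup_{|z|\le r_0}|S_{f_n}-S_f|^2<\varepsilon$ for $n\ge N$. Since $(1-|z|^2)^{\alpha+4}\le(1-|z|^2)^{\alpha}$, the inner integral is then at most
$$\varepsilon\,\frac{\alpha+5}{\alpha+1}\|\phi\|_\alpha^2.$$

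Adding the two pieces, for $n\ge N$,
$$\|S_{f_n}\phi-S_f\phi\|_{\alpha+4}^2\le \varepsilon\Big(1+\frac{\alpha+5}{\alpha+1}\|\phi\|_\alpha^2\Big).$$
Since $\varepsilon>0$ was arbitrary, the claim follows. One could also phrase the whole argument as dominated convergence with the majorant $144|\phi|^2(1-|z|^2)^\alpha$, since the integrand tends to $0$ pointwise.
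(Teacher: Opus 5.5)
Your proposal is correct and follows the paper's route. The paper proves the claim by repeating the Sub-Lemma \ref{sub-2} argument: split at $|z|=r_0$, control the outer part by the universal bound $|S_h(z)|(1-|z|^2)^2\le 6$ on $\mathcal{S}$, and control the inner part by locally uniform convergence $S_{f_n}\to S_f$. Your $n$-independent majorant $144|\phi(z)|^2(1-|z|^2)^{\alpha}$ is the right way to make the tail uniform in $n$; the paper's Sub-Lemma \ref{sub-2} infers tail smallness only from a bound on the total norm, which is not enough on its own. The same majorant also justifies your dominated-convergence reformulation.
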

Let $\alpha>-1$, we define the functional $\Pi_{\alpha}$ as
$$ \Pi_{\alpha}: f  \mapsto \mathbb{M}_f(\alpha),\,\, f\in \mathcal{S}.$$ 
Claim \ref{jia-p} implies that 
\begin{proposition}\label{jia-p-1}
For each $\alpha>-1$, the functional $\Pi_{\alpha}$ is continuous on the class $\mathcal{S}$ under the locally uniformly convergence topology. 
\end{proposition}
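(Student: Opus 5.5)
We prove continuity of $\Pi_{\alpha}$ at a fixed $f\in\mathcal{S}$ by establishing lower semicontinuity and upper semicontinuity separately, with Claim \ref{jia-p} as the main tool. Throughout, fix $\alpha>-1$ and let $f_n\to f$ locally uniformly in $\mathcal{S}$. Then $S_{f_n}\to S_f$ locally uniformly, and all symbols satisfy the uniform bound $\|S_{f_n}\|_{\mathcal{B}_2}\le 6$.

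\emph{Lower semicontinuity.} Fix $\phi\in{\bf A}_\alpha^2$ with $\|\phi\|_\alpha=1$. By Claim \ref{jia-p},
$$\|S_f\phi\|_{\alpha+4}=\lim_{n\to\infty}\|S_{f_n}\phi\|_{\alpha+4}\le\liminf_{n\to\infty}\mathbb{M}_{f_n}(\alpha).$$
Taking the supremum over such $\phi$ gives $\mathbb{M}_f(\alpha)\le\liminf_n\mathbb{M}_{f_n}(\alpha)$. This step is routine.

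\emph{Upper semicontinuity.} Here we must show $\limsup_n\mathbb{M}_{f_n}(\alpha)\le\mathbb{M}_f(\alpha)$, and this is the main obstacle. Strong convergence on each fixed $\phi$ does not control operator norms from above, because the near-extremal functions $\phi_n$ for $S_{f_n}$ may escape to the boundary. Our plan is to use a dilation argument together with Lemma \ref{lemma-key}.

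First, for $\rho\in(0,1)$ set $g_\rho(z)=g(\rho z)$. Lemma \ref{lemma-key} gives $\mathbb{M}_{(S_{f})_\rho}(\alpha,\alpha+4)\le\mathbb{M}_f(\alpha)$. We compare $\mathbb{M}_{S_{f_n}}$ with $\mathbb{M}_{(S_{f_n})_\rho}$ and then pass to the limit in $n$ at fixed $\rho$. For fixed $\rho$ the symbols $(S_{f_n})_\rho$ are analytic on a neighbourhood of $\overline\Delta$ and converge uniformly on $\overline\Delta$ to $(S_f)_\rho$. Hence
$$\|(S_{f_n})_\rho-(S_f)_\rho\|_{{\bf M}_{\alpha,\alpha+4}}\le\sqrt{(\alpha+5)/(\alpha+1)}\,\sup_{\Delta}|(S_{f_n})_\rho-(S_f)_\rho|\to0,$$
so $\limsup_n\mathbb{M}_{(S_{f_n})_\rho}\le\mathbb{M}_{(S_f)_\rho}\le\mathbb{M}_f(\alpha)$.

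It remains to show that $\mathbb{M}_{S_{f_n}}(\alpha)-\mathbb{M}_{(S_{f_n})_\rho}(\alpha)\to0$ as $\rho\to1$, \emph{uniformly in $n$}. This uniform approximation is the genuinely hard part. The bound $\|S_g\|_{\mathcal{B}_2}\le6$ alone does not yield it, since the error comes from near the boundary, where all members of $\mathcal{S}$ are only controlled up to the Koebe size. Our first attempt would be to exploit compactness of $\mathcal{S}$ together with the subharmonicity in $a$ of $\|S_g(az)\phi\|$, from the proof of Lemma \ref{lemma-key}, to show that $\rho\mapsto\mathbb{M}_{(S_g)_\rho}$ is nondecreasing with limit $\mathbb{M}_g$. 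We would then use a Dini-type argument: these are continuous functions of $g$ on the compact set $\mathcal{S}$, increasing in $\rho$, with limit $\Pi_\alpha$.

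The Dini argument is circular, however, because it requires the limit $\Pi_\alpha$ to be continuous. If it fails, we would instead try to prove upper semicontinuity directly. We would pick near-extremal $\phi_n$ and split each into a weakly convergent part and a part concentrating at $\mathbb{T}$. On the concentrating part, we would attempt to bound the contribution of $S_{f_n}$ by a universal boundary quantity that is also attained by $S_f$. We expect this last comparison to be the point where the argument may actually break down.
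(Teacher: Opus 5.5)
You proved lower semicontinuity correctly, and that is all the paper's own justification delivers. The paper deduces continuity of $\Pi_\alpha$ directly from Claim~\ref{jia-p}, i.e.\ from strong convergence $S_{f_n}\phi\to S_f\phi$ for each fixed $\phi$. But strong operator convergence only gives $\mathbb{M}_f(\alpha)\le\liminf_n\mathbb{M}_{f_n}(\alpha)$.

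\textbf{The missing half is false, so the proposition itself fails.} You suspected upper semicontinuity might break down, and it does; no argument can close that gap. Take $q_n(z)=z+z^n/n$ with $n\ge 2$.
\begin{itemize}
\item \emph{$q_n\in\mathcal{S}$.} For $z_1\ne z_2$ in $\Delta$,
$$\Bigl|\frac{q_n(z_1)-q_n(z_2)}{z_1-z_2}-1\Bigr|=\frac{1}{n}\Bigl|\sum_{k=0}^{n-1}z_1^kz_2^{n-1-k}\Bigr|<1,$$
so $q_n$ is univalent.
\item \emph{$q_n\to\mathrm{id}$.} Since $|q_n(z)-z|<1/n$, the convergence is uniform on $\Delta$. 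Also $\Pi_\alpha(\mathrm{id})=0$ because $S(\mathrm{id})\equiv 0$.
\item \emph{Boundary singularities.} $q_n'(z)=1+z^{n-1}$ has simple zeros at the points $\omega$ with $\omega^{n-1}=-1$, all on $\mathbb{T}$. Near such an $\omega$, $S_{q_n}(z)=-\frac{3}{2}(z-\omega)^{-2}+O(|z-\omega|^{-1})$.
\item \emph{Lower bound.} Test a multiplier $g$ on the reproducing kernel $(1-\bar a z)^{-(\alpha+2)}$ of ${\bf A}^2_\alpha$ and use point evaluation in ${\bf A}^2_\beta$. This gives $|g(a)|(1-|a|^2)^{(\beta-\alpha)/2}\le\|g\|_{{\bf M}_{\alpha,\beta}}$, hence
$$\mathbb{M}_{q_n}(\alpha)\ \ge\ \lim_{t\to 1^-}|S_{q_n}(t\omega)|(1-t^2)^2=\frac{3}{2}\cdot 4=6\qquad\text{for all } n\ge 2,\ \alpha>-1.$$
Theorem~\ref{m-1} even gives $\mathbb{M}_{q_n}(\alpha)\ge\mathbb{M}_\kappa(\alpha)$, since $q_n\in\mathcal{U}_R$ has critical points on $\mathbb{T}$.
\end{itemize}
Thus $\liminf_n\Pi_\alpha(q_n)\ge 6>0=\Pi_\alpha(\lim_n q_n)$, and $\Pi_\alpha$ is not upper semicontinuous at the identity.

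\textbf{Where your plan breaks.} Fix $\rho<1$. Your own estimate gives $\mathbb{M}_{(S_{q_n})_\rho}(\alpha,\alpha+4)\to 0$, because $(S_{q_n})_\rho\to 0$ uniformly on $\overline{\Delta}$. Hence $\mathbb{M}_{q_n}(\alpha)-\mathbb{M}_{(S_{q_n})_\rho}(\alpha,\alpha+4)\ge 6-o(1)$, and the dilation approximation uniform in $n$ does not exist. Your second plan fails for the same reason. Near-extremal test functions for $q_n$ concentrate at the boundary critical points, where $S_{q_n}$ carries a full Koebe-size singularity that leaves no trace in the locally uniform limit.

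\textbf{What survives.} Lower semicontinuity is true. Equivalently, $\Pi_\alpha(f)=\sup_{\rho<1}\mathbb{M}_{(S_f)_\rho}(\alpha,\alpha+4)$: the inequality $\ge$ is Lemma~\ref{lemma-key}, and $\le$ is your lower-semicontinuity argument applied to dilations. Each functional in this supremum is continuous by your fixed-$\rho$ estimate. This weaker statement still supports the paper's later remark that Conjecture~\ref{k} need only be checked on a dense subclass of $\mathcal{S}$. It does not by itself yield Proposition~\ref{jia-p-2}.
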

Since $\mathcal{S}$ is compact under the locally uniformly convergence topology, we obtain that   
\begin{proposition}\label{jia-p-2}
For each $\alpha>-1$, the functional $\Pi_{\alpha}$ attains at least one maximum in $\mathcal{S}$.
\end{proposition}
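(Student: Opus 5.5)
The plan is to derive Proposition \ref{jia-p-2} from Proposition \ref{jia-p-1} together with the compactness of $\mathcal{S}$, via a maximizing-sequence argument. Fix $\alpha>-1$ and put
$$m_\alpha:=\sup_{f\in\mathcal{S}}\mathbb{M}_f(\alpha).$$

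\textbf{Step 1: $m_\alpha$ is finite.} By Shimorin's estimate recalled in the introduction, $\mathbb{M}_f(\alpha)\le 36\frac{\alpha+3}{\alpha+1}$ for all $f\in\mathcal{S}$. Alternatively, for every $f\in\mathcal{S}$ we have $|S(f)(z)|(1-|z|^2)^2\le 6$, and $\|g\phi\|_{\alpha+4}^2\le \frac{\alpha+5}{\alpha+1}\|g\|_{\mathcal{B}_2}^2\|\phi\|_\alpha^2$. Either way, $0<m_\alpha<\infty$.

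\textbf{Step 2: extract a convergent maximizing sequence.} Choose $f_n\in\mathcal{S}$ with $\mathbb{M}_{f_n}(\alpha)\to m_\alpha$. The class $\mathcal{S}$ is a normal family: the growth theorem gives $|f(z)|\le |z|/(1-|z|)^2$. It is also closed under locally uniform convergence, because by Hurwitz's theorem a locally uniform limit of univalent functions is univalent or constant, and the normalization $f'(0)=1$ rules out constants. Hence a subsequence, still denoted $f_n$, converges locally uniformly to some $f\in\mathcal{S}$.

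\textbf{Step 3: pass to the limit.} Proposition \ref{jia-p-1} gives $\mathbb{M}_{f_n}(\alpha)\to\mathbb{M}_f(\alpha)$, so $\mathbb{M}_f(\alpha)=m_\alpha$ and the maximum is attained at $f$.

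The main obstacle is that Step 3 only needs the upper-semicontinuity half of Proposition \ref{jia-p-1}, and this is the half that Claim \ref{jia-p} does not give directly. Strong pointwise convergence $S_{f_n}\phi\to S_f\phi$ yields
$$\|S_f\phi\|_{\alpha+4}=\lim_n\|S_{f_n}\phi\|_{\alpha+4}\le \liminf_n \mathbb{M}_{f_n}(\alpha)\|\phi\|_\alpha,$$
that is, only lower semicontinuity. If the continuity in Proposition \ref{jia-p-1} needs to be re-checked, I would argue as follows.
\begin{itemize}
\item For each $n$, pick a unit vector $\phi_n\in\mathbf{A}^2_\alpha$ that nearly realizes $\mathbb{M}_{f_n}(\alpha)$, and pass to a weak limit $\phi_n\rightharpoonup\phi$.
\item Split $\|S_{f_n}\phi_n\|_{\alpha+4}^2$ into the integral over $\{|z|\le r_0\}$ and the integral over $\{|z|>r_0\}$, as in the proof of Sub-Lemma \ref{sub-2}.
\item On $\{|z|\le r_0\}$ the locally uniform convergence of both $S_{f_n}\to S_f$ and $\phi_n\to\phi$ controls the integral.
\item On $\{|z|>r_0\}$ one must compare the mass escaping to the boundary with $\mathbb{M}_f(\alpha)$.
\end{itemize}
This last comparison is the delicate point. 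The estimate $|S_{f_n}|(1-|z|^2)^2\le 6$ alone only bounds the boundary contribution by the $\mathcal{B}_2$-type constant and not by $\mathbb{M}_f(\alpha)$, so the argument relies essentially on Proposition \ref{jia-p-1} as stated.
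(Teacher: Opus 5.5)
Your argument is the same as the paper's (compactness of $\mathcal{S}$ plus Proposition \ref{jia-p-1}), and you correctly identify its weak point. Claim \ref{jia-p} is a strong-convergence statement, so it only gives $\Pi_\alpha(f)\le\liminf_n\Pi_\alpha(f_n)$, while Step 3 needs $\limsup_n\Pi_\alpha(f_n)\le\Pi_\alpha(f)$. This is a genuine gap. The splitting you sketch cannot close it, because $\Pi_\alpha$ is \emph{not} upper semicontinuous on $\mathcal{S}$: Proposition \ref{jia-p-1}, on which you fall back, is false as stated.

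Let $\tau_a(z)=(z-a)/(1-\bar a z)$ and $F_a=\bigl(\kappa\circ\tau_a-\kappa(-a)\bigr)/\bigl(\kappa'(-a)(1-|a|^2)\bigr)\in\mathcal{S}_*$. The invariance computation in the remark before Conjecture \ref{k} applies here: the isometries $\phi\mapsto(\phi\circ\tau_a)(\tau_a')^{(\alpha+2)/2}$ of $\mathbf{A}^2_\alpha$ and $\psi\mapsto(\psi\circ\tau_a)(\tau_a')^{(\alpha+6)/2}$ of $\mathbf{A}^2_{\alpha+4}$ intertwine $M_{S(\kappa)}$ and $M_{S(F_a)}$. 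Hence $\Pi_\alpha(F_a)=\mathbb{M}_\kappa(\alpha)>0$ for every $a\in\Delta$. On the other hand,
\[
S(F_a)(z)=-\frac{6\,\tau_a'(z)^2}{\bigl(1-\tau_a(z)^2\bigr)^2},\qquad \tau_a'(z)=\frac{1-|a|^2}{(1-\bar a z)^2}.
\]
Suppose $a_n\to e^{i\psi}$ with $e^{2i\psi}\neq 1$. On $\{|z|\le r_0\}$ we have $\tau_{a_n}'\to 0$ uniformly and $\tau_{a_n}(z)\to -e^{i\psi}$ uniformly, so $S(F_{a_n})\to 0$ locally uniformly. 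Every locally uniform subsequential limit $F\in\mathcal{S}$ of $(F_{a_n})$ therefore has $S(F)\equiv 0$. Thus $F$ is a M\"obius map, and $\Pi_\alpha(F)=0<\mathbb{M}_\kappa(\alpha)=\lim_n\Pi_\alpha(F_{a_n})$.

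So the entire operator norm can escape into the region $\{|z|>r_0\}$ that you flagged. The locally uniform limit of a maximizing sequence need not be a maximizer; here it is even a minimizer. Consequently your Step 3, like the paper's one-line deduction from compactness, does not prove the proposition. A proof needs a genuinely new ingredient. One option is to show that some maximizing sequence can be renormalized by disk automorphisms (which leave $\Pi_\alpha$ unchanged, by the invariance above) so that no norm is lost in the limit. Another is to identify a maximizer directly; for $\alpha>0$, Conjecture \ref{k} would make $\kappa$ one.
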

From Proposition 7.30 in \cite{Jin-c}, we know that for each $\alpha>-1$, the maximums of the functional $\Pi_{\alpha}$ can not be from the class $\mathcal{S}_{Q}$. 
Here, $\mathcal{S}_{Q}$ is the class of all univalent functions $f$ which belong to $\mathcal{S}$ and admit a quasiconformal extension to $\widehat{\mathbb{C}}$.  Proposition \ref{jia-p-1} tells us that, to prove Conjecture \ref{k}, it is enough to verify the conjecture on a dense subset of $\mathcal{S}$, such as the class of univalent polynomials in the unit disk.  
\end{remark}
\begin{remark}
Finally, we end the paper with the following
\begin{question}Under the notation and assumptions of Theorem \ref{m-2}, is it true that
$$\mathbb{M}(\alpha, \beta)=\mathbb{M}_{g_0}(\alpha,\beta),$$
for some $\alpha, \beta$? Here $g_0$ is defined as in Theorem \ref{m-2}.
\end{question}\end{remark}

\section{{\bf Acknowledgements}}
The author was supported by the National Natural Science Foundation of China (Grant No. 11501157).
\begin{spacing}{1.2}

\end{spacing}

\begin{thebibliography}{99}

\bibitem{AAR}Andrews G., Askey R.,  Roy R.,  {\em Special functions},  Cambridge University Press, Cambridge, 1999.

\bibitem{Be}Bertilsson D., {\em On Brennan's conjecture in conformal mapping}, Dissertation, Royal Institute of Technology, 1999.

\bibitem{Du} Duren P., {\em Univalent functions}, Springer-Verlag, New York, 1983.

\bibitem{DGM}Duren P., Gallardo-Gutiérrez E., Montes-Rodríguez A., {\em A Paley-Wiener theorem for Bergman spaces with application to invariant subspaces}, Bull. Lond. Math. Soc., 39 (2007), no. 3, pp. 459-466.

\bibitem{GM}Garnett J., Marshall D., {\em Harmonic Measure}, New Math. Monogr., vol. 2, Cambridge Univ. Press, Cambridge, 2005.

   
\bibitem{HSo}Hedenmalm H., Sola A., {\em Spectral notions for conformal maps: a survey}, Comput. Methods Funct. Theory,  8 (2008), no. 1-2, pp. 447-474.

\bibitem{HS-1}Hedenmalm H., Shimorin S., {\em Weighted Bergman spaces and the integral means spectrum of conformal mappings},  Duke Math. J. 127 (2005), no. 2, pp. 341-393.

\bibitem{HP}Hille E., Phillips R., {\em Functional analysis and semi-groups}, Amer. Math. Soc. Colloq. Publ., Vol. 31,  American Mathematical Society, Providence, RI, 1957. xii+808 pp.

\bibitem{Jin-1}Jin J., {\em On a multiplier operator induced by the Schwarzian derivative of univalent functions}, Bull. Lond. Math. Soc., 56 (2024), no. 7, pp. 2296-2314.

\bibitem{Jin-c}Jin J., {\em Complex exponential integral means spectra of univalent functions and the Brennan conjecture}, arXiv: 2512.09330.

\bibitem{L}Lehto O., {\em Univalent functions and Teichm\"uller spaces}, Springer-Verlag, 1987.

\bibitem{SS}Shimorin S., {\em A multiplier estimate of the Schwarzian derivative of univalent functions}, Int. Math. Res. Not., No. 30, 2003.

\bibitem{zhao}Zhao R., {\em  Pointwise multipliers from weighted Bergman spaces and Hardy spaces to weighted Bergman spaces}, Ann. Acad. Sci. Fenn. Math., 29 (2004), no. 1, pp. 139-150.
\end{thebibliography}
\end{document}